\pdfoutput=1

\documentclass[oneside, reqno, 11pt, a4paper]{amsart}
\usepackage[australian]{babel}

\AtBeginDocument{
  \DeclareSymbolFont{AMSb}{U}{msb}{m}{n}
  \DeclareSymbolFontAlphabet{\mathbb}{AMSb}
}
\DeclareFontFamily{U}{mathx}{\hyphenchar\font45}
\DeclareFontShape{U}{mathx}{m}{n}{<-> mathx10}{}
\DeclareSymbolFont{mathx}{U}{mathx}{m}{n}
\DeclareMathAccent{\widebar}{0}{mathx}{"73}

\usepackage[dvipsnames]{xcolor}
\usepackage{graphicx}
\usepackage{caption}
\usepackage{subcaption}

\usepackage{pgf}
\usepgflibrary{fpu}

\graphicspath{{./}}

\usepackage[a4paper, pdftex, left=2cm, top=2cm, right=2cm, bottom=2cm]{geometry}
\usepackage{changepage}

\usepackage[foot]{amsaddr}
\numberwithin{equation}{section}

\usepackage{url}
\usepackage{xurl}          
\usepackage{hyperref}
\hypersetup{
    breaklinks=true,
    bookmarksopen=true,
    pdftitle={Rotating bases for finite element exterior calculus},
    pdfauthor={Santiago Badia, Jordi Manyer, Antoine Marteau},
    pdfsubject={},
    colorlinks=true,
    linkcolor=blue,
    citecolor=blue,
    filecolor=blue,
    urlcolor=blue
}

\newcommand{\tab}[1]{Tab.~\ref{#1}}
\newcommand{\sect}[1]{Sect.~\ref{#1}}
\newcommand{\app}[1]{Apdx.~\ref{#1}}

\usepackage{csquotes}
\usepackage[backend=biber, defernumbers=true, maxbibnames=5, style=numeric-comp, isbn=false, bibencoding=utf8, safeinputenc, url=false, doi=true, giveninits=true]{biblatex}
\usepackage{mathtools}
\usepackage{mathabx}
\usepackage{stmaryrd}
\usepackage{dsfont}

\usepackage{float}
\usepackage{framed}
\usepackage{verbatim}
\usepackage{fancyvrb}
\usepackage{booktabs}
\usepackage{longtable}
\usepackage{colortbl}
\usepackage{multirow}
\usepackage{array}
\usepackage{cancel}
\usepackage{accents}
\usepackage{mleftright}
\usepackage{thm-restate}
\usepackage{etoolbox}
\usepackage[normalem]{ulem}

\usepackage[inline]{enumitem}
\usepackage{siunitx}

\newtheorem{theorem}{Theorem}
\newtheorem{lemma}{Lemma}
\newtheorem{corollary}{Corollary}

\newtheorem{definition}{Definition}
\theoremstyle{remark}
\newtheorem{remark}{Remark}
\theoremstyle{plain}

\usepackage{listings}
\lstdefinelanguage{julia}{%
  morekeywords={function,end,if,else,elseif,for,in,while,return,copy,sort,
                minimum,minmax,invperm,true,false,struct,const,let,do},
  sensitive=true,
  morecomment=[l]\#,
  morestring=[b]",
}
\usepackage{amsmath, amsfonts, amssymb, amscd, bm, mathtools}
\mathtoolsset{showonlyrefs}

\newcommand{\identity}{\mathds{1}}
\newcommand{\R}{\mathbb{R}}
\newcommand{\N}{\mathbb{N}}

\newcommand{\iv}[1]{\llbracket #1 \rrbracket}

\DeclareMathOperator{\supp}{supp}
\DeclareMathOperator{\curl}{curl}
\DeclareMathOperator{\trs}{tr}
\DeclareMathOperator{\spn}{span}

\newcommand{\PrL}{\mathcal{P}_r\Lambda^1}
\newcommand{\PrmL}{\mathcal{P}_r^{-}\Lambda^1}

\newcommand{\dd}{\mathop{}\!\mathrm{d}}   
\newcommand{\BB}{b}                       
\newcommand{\ind}[1]{\identity_{#1}}      
\newcommand{\sort}{\uparrow}              

\newcommand{\Sset}{\mathcal{S}}           
\newcommand{\Bset}{\mathcal{B}}           

\newcommand{\sects}[2]{Sects.~\ref{#1} and~\ref{#2}}

\newif\iftrackchanges  \trackchangesfalse
\newif\ifshowcomments  \showcommentsfalse

\usepackage{acronym}
\acrodef{pde}[PDE]{partial differential equation}
\acrodef{fe}[FE]{finite element}
\acrodef{fem}[FEM]{finite element method}
\acrodefplural{fem}[FEMs]{finite element methods}
\acrodef{dof}[DOF]{degree of freedom}
\acrodefplural{dof}[DOFs]{degrees of freedom}
\acrodef{feec}[FEEC]{finite element exterior calculus}

\title[Rotating bases for FEEC]{
Rotating bases for finite element exterior calculus: closed-form change of basis under vertex permutations
}
\date{\today}
\keywords{Finite element exterior calculus, geometric decomposition, vertex relabelling, change of basis, conformity}

\address{$^\dagger$School of Mathematics\\Monash University\\Clayton\\Victoria 3800\\Australia}
\author[S. Badia]{Santiago Badia$^{\dagger}$}
\email{santiago.badia@monash.edu}
\author[J. Manyer]{Jordi Manyer$^{\dagger}$}
\email{jordi.manyer@monash.edu}
\author[A. Marteau]{Antoine Marteau$^{\dagger}$}
\email{antoine.marteau@monash.edu}

\begin{document}

\begin{abstract}
The geometrically decomposed bases of the polynomial differential form spaces $\PrL$ and $\PrmL$ on a simplex are products of a barycentric scalar polynomial and a directional or Whitney 1-form. These bases depend on an ordering of the simplex vertices. On unstructured meshes, neighbouring cells need not agree on this ordering, making it non-trivial to achieve conformity in finite element software. Existing strategies compute degree-of-freedom transformation matrices numerically, or impose a global vertex ordering through face-bubble spaces or by preprocessing the mesh. We consider the standard basis of the $\PrmL$ (trimmed) space and introduce a new basis for the $\PrL$ (full) space, for arbitrary polynomial order $r \geq 1$ and dimension $D \geq 2$, and prescribe their basis polynomials as shape functions. We show that the pullback of shape functions under the change of coordinates induced by an arbitrary vertex relabelling $\pi$ in the symmetric group $S_{D+1}$ admits a closed-form, combinatorial expression. For most basis functions this pullback is a single basis function of the relabelled ordering, up to sign. On an explicitly characterised ``filter-hit'' set, this pullback is a signed sum of at most $D$ (full space) or exactly two (trimmed space) relabelled basis functions. All coefficients are in $\{-1, +1\}$ for all $r$, $D$ and $\pi$. The inverse transformation is obtained by computing the formulas at $\pi^{-1}$, so no numerical inversion of a change-of-basis matrix is needed. Conforming assembly and evaluation on simplicial meshes with arbitrary vertex orderings reduce to index manipulation. Our results are compared with the study of relabelling-invariant bases of Berchenko-Kogan and Licht, and verified in an open-source Julia implementation in the Gridap.jl library.
\end{abstract}

\maketitle

\section{Introduction}\label{sec:intro}

Finite element exterior calculus (FEEC) \cite{ArnoldFalkWinther2006} unifies the
classical compatible finite element families, such as the Lagrange, N\'ed\'elec, Raviart--Thomas and
Brezzi--Douglas--Marini elements. On simplicial meshes, the latter are equivalent to instances
of the spaces $\mathcal{P}_r \Lambda^K$ and $\mathcal{P}_r^{-} \Lambda^K$ of polynomial differential
$K$-forms, for suitable form degree $K$ and polynomial degree $r$. These spaces admit the \emph{geometric decomposition} of Arnold, Falk and
Winther \cite{ArnoldFalkWinther2009} (see also \cite{Licht2022,BerchenkoKogan2025extension}),
with explicit face-owned bases built from scalar barycentric monomials and
directional or Whitney 1-forms. Such bases depend on an
ordering of the simplex vertices and, as Arnold, Falk and Winther observe, there
is no canonical way to choose one \cite{ArnoldFalkWinther2009}. On a mesh, each cell
reads its vertices in the local ordering supplied by the mesh data structure, and
neighbouring cells need not agree on the ordering of a shared face.
Conformity requires matching form traces across that face,
computed on each side from ordering-dependent basis functions. At lowest order,
the discrepancy reduces to edge signs \cite{RognesKirbyLogg2009,LohiKettunen2021}.
At higher order the face-owned basis functions of the two cells are related by
a non-trivial change of basis, and no closed form is
available for it in the literature.
Several strategies to achieve conformity, as well as theoretical study of existence of relabelling-invariant bases for the considered spaces, were proposed in the literature. We review them next.

\subsection{Numerically computed transformation matrices}\label{sec:related}
This approach is the one used in modern codes such as FEniCS/Basix. Rognes, Kirby and Logg \cite{RognesKirbyLogg2009}
handle H(div)/H(curl) assembly with a sorted-vertex numbering convention plus
per-facet sign flips, which is a complete solution at lowest order. Scroggs,
Dokken, Richardson and Wells \cite{ScroggsDokkenRichardsonWells2022} replace
conventions by per-face degree-of-freedom permutations and transformation
matrices, valid for arbitrary degree and cell polytope. Scroggs and Wells
\cite{ScroggsWells2026} compute the change-of-basis matrix $T$ of the entire shape-function basis, for an arbitrary Ciarlet element. For that, the authors decompose the vertex permutation $\pi$ of the cell into face-wise elementary reflections and rotations, in order to compute the contribution to $T$ of each of them independently and cheaply. The degree-of-freedom basis transforms by $T^{-\top}$. The same architecture appears in MFEM
with sign flips encoded as negative local indices
\cite{MFEM2021} and, for a larger class of mappings, in the FInAT/Firedrake
transformation framework \cite{Kirby2018,KirbyMitchell2019}. That this machinery
remains delicate in practice is documented by the recent resolution of the ``sign
conflict'' for $hp$-N\'ed\'elec elements with hanging nodes in deal.II
\cite{KinnewigWickBeuchler2025}. All of these compute the transformation \emph{numerically},
per family and degree, at setup time. Since the matrices are precomputed on
the reference element, their runtime overhead is small. However, this forces using $T$ in every consumer of reference element tabulations, and introduces floating-point errors during the numerical computation of $T$ in the implementation.
This work introduces bases with closed-form and cheap formulas for $T$ and $T^{-1}$, such that no numerical error is introduced, and constructing $T$ and $T^{-1}$ is not necessary to apply their action.

\subsection{Global ordering conventions and orientation-embedded bases}
An alternative is not using element-wise fixed reference bases. Ainsworth and Coyle \cite{AinsworthCoyle2003} and
Zaglmayr \cite{Zaglmayr2006} parametrise hierarchical shape functions by \emph{global} vertex
numbers, so that conformity holds by construction. Notably, this is the mechanism employed by NGSolve. Fuentes, Keith, Demkowicz and Nagaraj
\cite{FuentesKeithDemkowiczNagaraj2015} expand the idea with
\emph{orientation-embedded} shape functions for several sequences of elements. The functions owned by each mesh face (edge, facet, \dots) take the local orientation as a parameter,
realised as a permutation of the face's coordinates. Their bases admit closed-form expressions, so their work is the closest prior to ours, but require working with labelling-dependent bases that are unions of face-owned bases. Instead, this work provides bases for the complete cell spaces that transform under any vertex relabelling by closed-form laws.

\subsection{Mesh reorientation}
Agelek, Anderson, Bangerth and Barth
\cite{AgelekAndersonBangerthBarth2017} study the preprocessing alternative, namely to
\emph{orient} the mesh cells so that local orderings are globally compatible.
Unfortunately, this strategy cannot be applied in general. For 3D
hexahedral meshes compatible orderings need not exist \cite{AgelekAndersonBangerthBarth2017},
and non-orientable manifolds (e.g. a M\"obius band or a Klein bottle) admit no simplex mesh with globally consistent orientation.
This strategy is mostly relevant to orientable simplicial meshes, in which each cell's vertices are sorted by global index.
Even when a global orientation exists, reorientation is a global and thus expensive mesh
operation,  it is hard to parallelise since consistency must be negotiated across
processor boundaries, and it must be re-run after every mesh modification.
The present paper makes preprocessing unnecessary, since
any local ordering is accepted as-is and no global orientation is ever
required.

\subsection{Symmetry, invariance, and Whitney-form combinatorics}
The action of the
symmetric group $S_{D+1}$ on FEEC spaces by vertex relabelling has been studied in two recent works. Licht
\cite{Licht2023symmetry} studies when $\mathcal{P}_r \Lambda^k$ and $\mathcal{P}_r^{-} \Lambda^k$ admit bases that
are \emph{invariant} under vertex relabelling, meaning that the pullback by the change-of-coordinates induced by each permutation maps each basis function to
$\pm$ another. The author classifies candidate invariant bases through the monomial
representation theory of the symmetric group, and constructs such bases using complex coefficients for certain degrees in dimensions two and three.
Berchenko-Kogan \cite{BerchenkoKogan2024} proves which degrees admit invariant bases modulo sign flip in
these dimensions. On the triangle, $\mathcal{P}_r^{-} \Lambda^1$ admits
such a basis if and only if $r \notin 3\N_0 + 2$, and $\mathcal{P}_r \Lambda^1$ if
and only if $r \notin 3\N_0$.
Both works study an existence question on a single simplex, and neither provides the expansion of a basis function in the relabelled basis when invariance is not achievable.
Besides, when invariant bases do exist, they are not always geometrically decomposed bases, the ones conformity requires \cite[Corollary 4.10]{BerchenkoKogan2024} and most of the results are restricted to dimensions two and three.
Different authors  have used combinatorics to study these bases. Bossavit \cite{Bossavit2002,RapettiBossavit2009} worked on the same trimmed spaces spanning families as \cite{BerchenkoKogan2025extension} and this paper. Proposition 3.5 of \cite{RapettiBossavit2009} records the linear relations between the family forms, identities that we will use to derive change-of-basis formulas. Christiansen and Rapetti \cite{ChristiansenRapetti2016} state that these spaces have ``natural spanning families'' but no natural bases. Lohi and Kettunen \cite{LohiKettunen2021,Lohi2022}
treat the orientation of lowest-order Whitney forms combinatorially (they are
alternating in their vertex indices) and implement higher order via local
interpolation solves. None of these uses the combinatorial identities to describe
the action of $S_{D+1}$ on a basis.

\tab{tab:strategies} summarises the comparison between the different strategies found in the literature, as well as that proposed in this paper.

\begin{table}[htbp]
  \centering
  \small\setlength{\tabcolsep}{5pt}
  \begin{tabular}{@{}lccccc@{}}
    \toprule
    \textbf{Strategy} & \textbf{Closed} & \textbf{Fixed ref.} & \textbf{No pre-} & \textbf{Exact} & \textbf{Non-orientable}\\
                      & \textbf{form}   & \textbf{basis}      & \textbf{processing} & \textbf{arithmetic} & \textbf{meshes}\\
    \midrule
    DOF transformation matrices \cite{ScroggsDokkenRichardsonWells2022,ScroggsWells2026} & no & yes & yes & no & yes\\
    Global ordering conventions \cite{AinsworthCoyle2003,Zaglmayr2006,FuentesKeithDemkowiczNagaraj2015} & --- & no & yes & --- & yes\\
    Mesh reorientation \cite{AgelekAndersonBangerthBarth2017} & --- & yes & no & --- & no\\
    Rotating bases (this work) & yes & yes & yes & yes & yes\\
    \bottomrule
  \end{tabular}
  \caption{Strategies to achieve conformity. ``Closed form'' refers to
    an explicit formula for the inter-element transformation of a fixed
    reference basis. Entries ``---'' indicate that the strategy avoids the
    transformation rather than representing it.
    Mesh reorientation is the only strategy that requires a globally
    consistent orientation, which does not exist on non-orientable manifolds.
    The other strategies work from local vertex indices alone.}
  \label{tab:strategies}
\end{table}

\subsection{Contributions and outline}\label{sec:contributions}

To the best of our knowledge, a closed-form, uniform-in-$r$,
combinatorial transformation law for a geometrically decomposed basis of
$\PrL$ and $\PrmL$ under arbitrary vertex relabellings has not
appeared in the literature.
This paper provides such a law for every degree $r \geq 1$ and every dimension $D$, for the standard basis of $\PrmL$ of \cite{ArnoldFalkWinther2009} and for a new basis of $\PrL$. We call them \emph{rotating bases}. Under an arbitrary vertex relabelling $\pi \in S_{D+1}$, most basis functions pull
back to a \emph{single} basis function of the relabelled basis with coefficient $\pm 1$,
except on a small \emph{filter-hit} subset, where they pull back to a
signed sum of at most $D$ (full space) or exactly two (trimmed space) relabelled
basis functions, uniformly in $r$, $D$ and
$\pi$. For $\PrL$, the degree independence of the calculus comes from a modification of the
directional 1-forms $\psi$ of \cite{ArnoldFalkWinther2009}. We weight the differentials of barycentric coordinates by the \emph{support
indicator} of the Bernstein multi-exponent $\alpha$ rather than by its value. This results in an $r$-independent family of directional forms, that we denote by catalogue, generating the bases using linear combinations with scalar barycentric polynomials for every order. Our contributions are the following.
\begin{enumerate}
\item \emph{Closed-form transformation laws.} For the proposed bases of both $\PrL$ and $\PrmL$, we prove pullback
  theorems (Theorems~\ref{thm:U3} and~\ref{thm:T3}) giving the exact expansion of every basis
  function under every vertex relabelling, together with the characterisation of
  the filter-hit set, a resummation identity (full space) and a two-term
  exponent-shift identity (trimmed space) for the hit cases (Theorem~\ref{thm:U1}, Lemma~\ref{lem:psi-resummation}, Theorem~\ref{thm:T1} and Theorem~\ref{thm:T2}).
\item \emph{Closed-form inverse using combinatorics.} The change-of-basis matrix $T = T(\pi)$ is
  integer, has at most $D$ non-zero entries per row for $\PrL$ and at most
  two for $\PrmL$, and satisfies
  $T(\pi)^{-1} = T(\pi^{-1})$, such that no numerical inversion of $T$ is needed.
\item \emph{Conformity on arbitrary meshes.} Trace uniqueness across a shared
  mesh face is guaranteed by the reference-simplex relabelling calculus
  (Theorem~\ref{thm:conformity-reduction} and Corollary~\ref{cor:conformity}). \mbox{$H \Lambda^1$-conforming} assembly and evaluation on simplicial meshes with arbitrary local vertex orderings then need no numerically computed transformation matrix, and neither a global ordering convention nor mesh preprocessing.
\item \emph{Validated open-source implementation.} The bases are implemented in the Gridap finite element ecosystem
  \cite{BadiaVerdugo2020,VerdugoBadia2022}, with numerical validation of the change-of-basis formulas and a curl--curl problem solved on meshes with arbitrary local vertex orderings
  (\sects{sec:implementation}{sec:numexp}).
\end{enumerate}

The results are presented for form order $K = 1$. The extension to $K \geq 2$ is expected to follow
by wedging $K$ directional 1-forms, as in the construction of
\cite{ArnoldFalkWinther2009}, and is left to future work. Since Berchenko-Kogan proved \cite{BerchenkoKogan2024} that pullback-invariant bases fail to exist for infinitely many polynomial degrees, the multi-term filter-hit cases cannot be avoided in general. The contribution of this paper is to resolve those cases in closed form.

The paper is organised as follows. \sect{sec:prelim} introduces notations, the relabelling
calculus, and the reduction of mesh conformity to reference relabellings.
\sects{sec:full}{sec:trimmed} construct the rotating bases of $\PrL$ and $\PrmL$ and
prove the change-of-basis theorems. \sect{sec:dofs} details the choice of degree-of-freedom basis that fits the change formulas. \sect{sec:implementation} describes the
implementation, \sect{sec:numexp} the numerical verification, and
\sect{sec:conclusions} draws conclusions. \app{sec:bernstein} records the Bernstein-normalised form of the trimmed calculus.

\section{Preliminaries}\label{sec:prelim}

This section fixes notation and records the two elementary facts on which
everything else rests: how vertex relabellings act on barycentric monomials and barycentric
differentials (\sect{sec:prelim-simplex}), and how inter-element conformity on a simplicial
mesh can be achieved using a vertex-relabelling change-of-basis in the reference simplex.

\subsection{The simplex, barycentric coordinates, and vertex relabellings}\label{sec:prelim-simplex}

Let $\Delta = [v_0, \dots, v_D] \subset \R^D$ be a non-degenerate $D$-simplex whose vertices carry
the labels $0, \dots, D$, where $[\dots]$ denotes the convex hull of a set of points. The barycentric coordinates $\xi = (\xi_0, \dots, \xi_D)$ are the affine
functions determined by $\xi_i (v_j) = \delta_{ij}$. They satisfy $\xi_i \geq 0$ on $\Delta$ and
$\sum_{i=0}^D \xi_i = 1$. A labelling of the vertices, equivalently a choice of $\xi$, is called a \emph{local ordering}. We identify a face of $\Delta$ with its set of vertex labels
$f \subseteq \{0, \dots, D\}$, with $\dim f = |f| - 1$ and smallest
label $\min f$. For a single label $k$ we abbreviate $f \setminus k \doteq f \setminus \{k\}$, promoting a label to the singleton set whenever a set operation demands it. Throughout,
faces satisfy $\dim f \geq 1$. The label set carries its natural total order.\footnote{A
face has no ordering data of its own, but inherits one from the ambient
order. This inherited order is what makes the smallest label $\min f$
and the increasing listing of a face canonical,
involving no choice. Equalities between label sets, such as the
covering conditions of \sects{sec:full}{sec:trimmed}, are equalities of
sets.} When a geometric entity is required, we identify the
label set with the convex hull of its vertices. This standard abuse of notation is used throughout when the context leaves no ambiguity.

A multi-exponent is a vector $\alpha \in \N_0^{D+1}$ with modulus $|\alpha| = \sum_i \alpha_i$ and
support $\supp(\alpha) = \{i : \alpha_i > 0\}$. Its \emph{support indicator} is the vector
$\ind{\alpha} \in \{0,1\}^{D+1}$ with entries $(\ind{\alpha})_i = \iv{\alpha_i > 0}$, where $\iv{P}$
denotes the Iverson bracket, equal to $1$ if the predicate $P$ holds and to $0$
otherwise.
The associated barycentric monomial is
\begin{equation}
  \BB(\xi; \alpha) = \xi^\alpha = \xi_0^{\alpha_0} \cdots \xi_D^{\alpha_D}.
\end{equation}

\begin{remark}[normalisation]\label{rem:normalisation}
  The Bernstein polynomials often used in the literature instead of barycentric monomials include the multinomial factor
  $|\alpha|! / (\alpha_0 ! \cdots \alpha_D !)$, which is invariant under any permutation of the entries
  of $\alpha$. Statements whose formulas only permute the entries of $\alpha$, which
  covers the calculus related to $\PrL$, therefore hold for the Bernstein basis.
  However, the calculus related to $\PrmL$ transfers one unit of degree between entries
  of $\alpha$. The multinomial factor then fails to cancel, introducing ratios of multinomials in the change matrices instead of $\pm 1$. We work
  with bare monomials throughout. The Bernstein counterpart of the trimmed
  calculus is worked out in \app{sec:bernstein}.
\end{remark}

\textbf{Vertex relabellings.} Let $\pi \in S_{D+1}$ act on the labels $\{0, \dots, D\}$, where
$S_{D+1}$ denotes the symmetric group of that label set, that is, the group of
bijections of $\{0, \dots, D\}$ onto itself. We consider a second copy $\Delta_\lambda$ of the simplex, carrying its own barycentric coordinates $\lambda_0, \dots, \lambda_D$ with $\lambda_i (v_j) = \delta_{ij}$, and write $\Delta_\xi$ for the original. The relabelling acts through the affine map $A_\pi : \Delta_\xi \to \Delta_\lambda$ defined by $A_\pi (v_i) = v_{\pi(i)}$, which sends the vertex labelled $i$ to the vertex labelled $\pi(i)$. Its pullback relates the two coordinate systems by
\begin{equation}\label{eq:relabel}
  A_{\pi^{-1}}^* \xi_i = \lambda_{\pi(i)}, \quad \text{equivalently} \quad A_\pi^* \lambda_i = \xi_{\pi^{-1}(i)},
\end{equation}
since $(A_{\pi^{-1}}^* \xi_i)(v_j) = \xi_i (v_{\pi^{-1}(j)}) = \delta_{\pi(i) j}$. $\Delta$ is used when the labelling does not matter.
The effect of relabelling can be characterised for all indexed data. On a label set it acts elementwise,\footnote{The relabellings introduced below act bijectively on the label set
but not monotonically. They map faces to faces without preserving the inherited
orders. The basis polynomial relabelling will thus rely on combinatorial tests such as $\pi(\min f) \neq \min \pi(f)$.}
\begin{equation}
  \pi(f) = \{\pi(i) : i \in f\}.
\end{equation}
On a vector of $\R^{D+1}$ indexed by the labels, such as a multi-exponent, it
relocates entries,
\begin{equation}\label{eq:perm-action}
  \pi(\alpha)_i = \alpha_{\pi^{-1}(i)},
\end{equation}
so that $\pi(\alpha)_{\pi(i)} = \alpha_i$ and $\ind{\pi(\alpha)} = \pi(\ind{\alpha})$.

These maps satisfy $A_\pi A_\tau = A_{\pi \tau}$ and $A_\pi^{-1} = A_{\pi^{-1}}$. Differential forms are transported by the corresponding
\emph{pullbacks}. We write $A_{\pi^{-1}}^*$ for the pullback taking a form on $\Delta_\xi$ to a form on $\Delta_\lambda$. By \eqref{eq:relabel}, it is
computed as the substitution $\xi_i \mapsto \lambda_{\pi(i)}$, $\dd \xi_i \mapsto \dd \lambda_{\pi(i)}$, and the
reverse direction is $A_\pi^*$.
On barycentric monomials, the pullback is purely combinatorial.

\begin{lemma}[relabelling of barycentric monomials]\label{lem:bernstein}
  For every multi-exponent $\beta$,
  \begin{equation}
    A_{\pi^{-1}}^* (\BB(\xi; \beta)) = \BB(\lambda; \pi(\beta)), \qquad A_\pi^* (\BB(\lambda; \beta)) = \BB(\xi; \pi^{-1}(\beta)).
  \end{equation}
\end{lemma}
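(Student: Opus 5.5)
The plan is to exploit that pullback by an affine map is a ring homomorphism on functions: it commutes with products and powers. By \eqref{eq:relabel}, $A_{\pi^{-1}}^*$ acts on the coordinate functions as the substitution $\xi_i \mapsto \lambda_{\pi(i)}$. Hence
\begin{equation}
  A_{\pi^{-1}}^* \BB(\xi;\beta) = \prod_{i=0}^D \bigl(A_{\pi^{-1}}^*\xi_i\bigr)^{\beta_i} = \prod_{i=0}^D \lambda_{\pi(i)}^{\beta_i}.
\end{equation}

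Next I will reindex the product with $j = \pi(i)$. This is a bijection of $\{0,\dots,D\}$ because $\pi\in S_{D+1}$. The product becomes $\prod_j \lambda_j^{\beta_{\pi^{-1}(j)}}$. By \eqref{eq:perm-action} the exponent is exactly $\pi(\beta)_j$, so the product equals $\BB(\lambda;\pi(\beta))$. Commutativity of the product of scalar functions justifies the reordering.

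For the second identity I will run the same argument with the roles of $\xi$ and $\lambda$ swapped, using $A_\pi^*\lambda_i = \xi_{\pi^{-1}(i)}$. The reindexing $j=\pi^{-1}(i)$ gives exponent $\beta_{\pi(j)} = \pi^{-1}(\beta)_j$. Alternatively, I can apply the first identity with $\pi^{-1}$ in place of $\pi$ and the two coordinate systems interchanged, since $A_\pi = A_{(\pi^{-1})^{-1}}$.

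The only delicate point is bookkeeping: keeping track of whether $\pi$ or $\pi^{-1}$ appears in the relocated exponent. The convention \eqref{eq:perm-action}, $\pi(\alpha)_{\pi(i)}=\alpha_i$, resolves this directly, so I expect no real obstacle.
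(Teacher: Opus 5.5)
Your proposal is correct and follows the paper's proof essentially verbatim: substitute $\xi_i \mapsto \lambda_{\pi(i)}$ via \eqref{eq:relabel}, reindex with $j = \pi(i)$ to obtain the exponent $\beta_{\pi^{-1}(j)} = \pi(\beta)_j$, and get the second identity from the first with $\pi$ replaced by $\pi^{-1}$. Your explicit check of the second identity, with exponent $\beta_{\pi(j)} = \pi^{-1}(\beta)_j$, is also correct.
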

\begin{proof}
  By \eqref{eq:relabel}, $\xi^\beta = \prod_i \xi_i^{\beta_i} = \prod_i \lambda_{\pi(i)}^{\beta_i}
  = \prod_j \lambda_j^{\beta_{\pi^{-1}(j)}} = \lambda^{\pi(\beta)}$. The second identity is the first with
  $\pi \leftarrow \pi^{-1}$.
\end{proof}

\textbf{Ambient representation of 1-forms.} We express 1-forms on $\Delta$ in the barycentric
differentials $\dd \xi_0, \dots, \dd \xi_D$, which are subject to the single relation
\begin{equation}\label{eq:ambient-relation}
  \sum_{i=0}^D \dd \xi_i = 0
\end{equation}
obtained by differentiating $\sum_i \xi_i = 1$.

\subsection{Polynomial differential forms and geometric decomposition}\label{sec:prelim-feec}

Let $\mathcal{P}_r (\Delta)$ be the polynomials of degree at most $r$ on $\Delta$. Using
$\sum_i \xi_i = 1$ to homogenise, $\mathcal{P}_r (\Delta) = \spn\{\xi^\alpha : |\alpha| = r\}$.\footnote{Each term of the canonical basis of total degree $r' < r$ is multiplied by $(\sum_i \xi_i)^{r - r'} = 1$, which raises each summand of the expanded product to exact degree $r$ without changing the function.} We always
work with exponents of exact modulus. The full polynomial 1-form space is
\begin{equation}\label{eq:full-space}
  \PrL (\Delta) = \Bigl\{\sum_{i=0}^D p_i \, \dd \xi_i : p_i \in \mathcal{P}_r (\Delta)\Bigr\}
  = \spn\{\xi^\alpha \, \dd \xi_i : |\alpha| = r, \, 0 \leq i \leq D\},
\end{equation}
with
$\dim \PrL = D \binom{r + D}{D}$ \cite{ArnoldFalkWinther2006}.

The Whitney 1-forms are, for labels $i \neq j$,
\begin{equation}\label{eq:whitney}
  \varphi(\xi; i, j) = \xi_i \, \dd \xi_j - \xi_j \, \dd \xi_i,
\end{equation}
alternating in $(i,j)$. The trimmed space of degree $r \geq 1$ can be characterised as
the span of the following family of Whitney forms with homogeneous polynomial coefficients of degree $r - 1$
\cite{ArnoldFalkWinther2006,RapettiBossavit2009,ChristiansenRapetti2016}:
\begin{equation}\label{eq:trimmed-space}
  \PrmL (\Delta) = \spn\{\xi^\alpha \, \varphi(\xi; i, j) : |\alpha| = r - 1, \, i \neq j\},
  \quad \dim \PrmL = r \binom{r + D}{D - 1}.
\end{equation}
This family is the Whitney forms basis for $r = 1$, but is not linearly independent for $r \geq 2$ and $D \geq 2$.

Both spaces $\PrL$ and $\PrmL$ are independent of the ordering. The geometric decomposition of Arnold, Falk and Winther \cite{ArnoldFalkWinther2009} (see also \cite{Licht2022,BerchenkoKogan2025extension}) decomposes the global conforming counterpart of both spaces into \emph{bubble} sub-spaces owned by faces $f$ of the mesh. Consequently, defining bases on a single simplex $\Delta$ is enough to characterise the globally conforming bases. The authors define bases of all bubble spaces constructed as the product of a barycentric monomial times a directional or Whitney 1-form indexed by data supported on $f$. We follow the same approach in this paper.

\subsection{From mesh conformity to reference relabellings}\label{sec:prelim-conformity}

Each simplex $K$ of the mesh is
parametrised by an affine bijection
$\Phi_K : \Delta \to K$. It depends on an ordered list of the vertices of $K$, which is supplied by the mesh data structure and determines the local ordering, that is, the coordinate system used in $\Delta$. This ordering is assumed
arbitrary. 1-form-valued reference fields are mapped from $\Delta$ to $K$ by the pushforward by $\Phi_K$, or equivalently by the pullback along its inverse, that is $u|_K = (\Phi_K)_* \hat{u} = (\Phi_K^{-1})^* \hat{u}$. Conformity in $H \Lambda^1$
requires the trace of the forms on each inter-element face $F$, that is, their pullback by the inclusion $F \hookrightarrow K$, to be single-valued. The following result characterises the conformity of piecewise mapped fields in terms of the vertex-relabelling calculus applied in the reference simplex.

\begin{theorem}[reduction of conformity to reference relabellings]\label{thm:conformity-reduction}
  Let $K_+ = \Phi_\xi (\Delta)$ and $K_- = \Phi_\lambda (\Delta)$ share the face $F = \Phi_\xi (f_+) = \Phi_\lambda (f_-)$,
  where $f_\pm$ are faces of $\Delta$. Choose $\pi \in S_{D+1}$ such that
  \begin{equation}\label{eq:pi-compat}
    \Phi_\lambda (v_{\pi(i)}) = \Phi_\xi (v_i) \quad \text{for all } i \in f_+
  \end{equation}
  (such $\pi$ exist, and satisfy $\pi(f_+) = f_-$). Then, for $\hat{u}_+, \hat{u}_- \in \Lambda^1 (\Delta)$,
  the traces on $F$ of $u_+ = (\Phi_\xi^{-1})^* \hat{u}_+$ and
  $u_- = (\Phi_\lambda^{-1})^* \hat{u}_-$ coincide if and only if
  \begin{equation}
    \trs_{f_-} \hat{u}_- = \trs_{f_-} (A_{\pi^{-1}}^* \hat{u}_+).
  \end{equation}
\end{theorem}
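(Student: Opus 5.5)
The plan is to reduce everything to a single affine identity on $\Delta$ and then use functoriality of pullbacks. First, existence of $\pi$: the face $F$ has $|f_+| = |f_-|$ vertices, and $\Phi_\xi$, $\Phi_\lambda$ are bijections on vertices. So the rule $i \mapsto \Phi_\lambda^{-1}(\Phi_\xi(v_i))$ defines an injection of labels $f_+ \to f_-$. Since the sets have the same cardinality, it is a bijection onto $f_-$, and it can be extended arbitrarily to a bijection $\{0,\dots,D\}\setminus f_+ \to \{0,\dots,D\}\setminus f_-$. This gives $\pi \in S_{D+1}$ with \eqref{eq:pi-compat} and $\pi(f_+) = f_-$.

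Next I would establish the key identity
\begin{equation}
  \Phi_\lambda \circ A_\pi = \Phi_\xi \quad \text{on } f_+ .
\end{equation}
Both sides are affine maps restricted to the face $f_+$, viewed as the convex hull of $\{v_i\}_{i\in f_+}$. They agree on its vertices, because $A_\pi(v_i) = v_{\pi(i)}$ and \eqref{eq:pi-compat} holds. An affine map on a simplex is determined by its values at the vertices, so the two maps coincide on all of $f_+$. Writing $\iota_{f_\pm} : f_\pm \hookrightarrow \Delta$ and $\iota_F^\pm : F \hookrightarrow K_\pm$, this becomes $\Phi_\lambda \circ \iota_{f_-} \circ (A_\pi|_{f_+}) = \Phi_\xi \circ \iota_{f_+}$, where $A_\pi|_{f_+} : f_+ \to f_-$ is an affine bijection. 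Let $\phi_\pm := \Phi_{\xi/\lambda}|_{f_\pm} : f_\pm \to F$ denote the restricted parametrisations, which are diffeomorphisms. Then $\phi_+ = \phi_- \circ A_\pi|_{f_+}$.

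Then I would compute the traces, pulling both back to reference faces. We have $\phi_-^*(\trs_F u_-) = \trs_{f_-}\hat u_-$, since $\Phi_\lambda^*(\Phi_\lambda^{-1})^* = \mathrm{id}$ and pullback commutes with restriction. Similarly, $\phi_+^*(\trs_F u_+) = \trs_{f_+}\hat u_+$. Because $\phi_-$ is a diffeomorphism, the traces coincide if and only if $\trs_{f_-}\hat u_- = \phi_-^*(\trs_F u_+)$. Now
\begin{equation}
  \phi_-^* = (A_\pi|_{f_+}^{-1})^* \phi_+^* = (A_{\pi^{-1}}|_{f_-})^*\phi_+^* ,
\end{equation}
so $\phi_-^*(\trs_F u_+) = (A_{\pi^{-1}}|_{f_-})^* \trs_{f_+}\hat u_+$. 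Since $A_{\pi^{-1}}$ maps $f_-$ onto $f_+$, naturality of pullback under restriction, $\iota_{f_+}\circ A_{\pi^{-1}}|_{f_-} = A_{\pi^{-1}}\circ\iota_{f_-}$, turns this into $\trs_{f_-}(A_{\pi^{-1}}^*\hat u_+)$. This is the claimed criterion.

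The only delicate step is bookkeeping the directions of the maps: $A_\pi$ versus $A_{\pi^{-1}}$, and the convention that $A_{\pi^{-1}}^*$ carries forms on $\Delta_\xi$ to forms on $\Delta_\lambda$. I would check the convention against \eqref{eq:relabel} to confirm that it is $A_{\pi^{-1}}$, and not $A_\pi$, that maps $f_-$ to $f_+$. I would also note that the arbitrary extension of $\pi$ off $f_+$ does not matter: traces on $f_-$ only see $A_{\pi^{-1}}$ restricted to $f_-$, which is fixed by \eqref{eq:pi-compat}.
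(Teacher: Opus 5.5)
Your proof is correct and follows essentially the same route as the paper: both derive $\Phi_\lambda \circ A_\pi = \Phi_\xi$ on $f_+$ and transport the trace through the restricted parametrisations. Your composite $\phi_+^{-1}\circ\phi_-$ is exactly the paper's $\theta = \Phi_\xi|_{f_+}^{-1}\circ\Phi_\lambda|_{f_-} = A_\pi^{-1}|_{f_-}$, and you also prove the existence of $\pi$ and the extension of the vertex identity to the whole face, which the paper only asserts.
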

\begin{proof}
  The pullback operation commutes with the restriction to subsimplices, so
  \[ \trs_F u_+ = (\Phi_\xi|_{f_+}^{-1})^* \trs_{f_+} \hat{u}_+, \qquad
     \trs_F u_- = (\Phi_\lambda|_{f_-}^{-1})^* \trs_{f_-} \hat{u}_-. \]
  Hence $\trs_F u_+ = \trs_F u_-$ iff
  $\trs_{f_-} \hat{u}_- = \theta^* \trs_{f_+} \hat{u}_+$ with
  $\theta = \Phi_\xi|_{f_+}^{-1} \circ \Phi_\lambda|_{f_-} : f_- \to f_+$. By \eqref{eq:pi-compat},
  $\Phi_\xi = \Phi_\lambda \circ A_\pi$ on $f_+$, so $\theta = A_\pi^{-1}|_{f_-}$, and
  $\theta^* \trs_{f_+} \hat{u}_+ = \trs_{f_-} (A_{\pi^{-1}}^* \hat{u}_+)$.
\end{proof}

\begin{remark}
  The permutation $\pi$ in \eqref{eq:pi-compat} is only determined on $f_+$ and is arbitrary
  elsewhere. This is not an issue since the basis functions owned by a face $f \subseteq f_+$
  will be indexed by data supported on $f$ (\sects{sec:full}{sec:trimmed}), so the relabelling actions on them depend on $\pi$ only through its restriction to $f_+$.
\end{remark}

\section{The rotating basis for \texorpdfstring{$\PrL$}{Pr Lambda1}}\label{sec:full}

This section constructs a geometrically decomposed basis of $\PrL$ from a spanning set of
products of a barycentric monomial and a directional 1-form, and proves that the basis
transforms under every vertex relabelling $\pi \in S_{D+1}$ by an explicit signed index
map with at most $D$ terms, given by analytical index manipulation.

\subsection{Directional 1-forms and spanning set}\label{sec:full-spanning}

\begin{definition}[directional 1-form]\label{def:psi}
  Let $f$ be a face of $\Delta$ with $\dim f \geq 1$, let $k \in f$, and let
  $s \in \{0, 1\}^{D+1}$ be an indicator vector with $\supp(s) \cup \{k\} = f$. The
  associated directional 1-form is
  \begin{equation}\label{eq:psi}
    \psi(\xi; f, k, s) = \dd \xi_k - \frac{s_k}{|s|} \sum_{j \in f} \dd \xi_j.
  \end{equation}
\end{definition}

Since $\supp(s) \cup \{k\} = f$,
two possible shapes occur:
\begin{equation}\label{eq:psi-shapes}
  \psi(\xi; f, k, s) =
  \begin{cases}
    \dd \xi_k - \dfrac{1}{|f|} \displaystyle\sum_{j \in f} \dd \xi_j & \text{if } \supp(s) = f,\\[2ex]
    \dd \xi_k & \text{if } \supp(s) = f \setminus k.
  \end{cases}
\end{equation}

These directional forms let us define the following geometrically decomposed
spanning set of $\PrL$ by instantiating $s$ as the support indicator
$\ind{\alpha}$.

\begin{definition}[spanning set]\label{def:spanning}
  $\Sset(\Delta_\xi, r)$ is the set of triplets $(f, k, \alpha)$ with $\dim f \geq 1$, $k \in f$,
  $|\alpha| = r$ and $\supp(\alpha) \cup \{k\} = f$. To each triplet is associated the 1-form
  \begin{equation}\label{eq:w}
    w(\xi; f, k, \alpha) = \BB(\xi; \alpha) \, \psi(\xi; f, k, \ind{\alpha}).
  \end{equation}
  The first shape of \eqref{eq:psi-shapes}
  occurs iff $\alpha_k > 0$ and the second iff $\alpha_k = 0$.
\end{definition}

We identify each triplet with its 1-form and each set of triplets with the corresponding family of forms, writing for instance $\spn \Sset(\Delta_\xi, r)$.

\begin{remark}[contrast with the directional forms of \cite{ArnoldFalkWinther2009}]
  The directional forms of Arnold, Falk and Winther are weighted by the exponent
  \emph{values}, namely $\psi_k^{\alpha, f} = \dd \xi_k - (\alpha_k / |\alpha|) \sum_{j \in f} \dd \xi_j$
  \cite[eq.~(8.2)]{ArnoldFalkWinther2009}. Replacing the support indicator
  $(\ind{\alpha})_k / |\ind{\alpha}|$ by the weight $\alpha_k / |\alpha|$ preserves the two cases of \eqref{eq:psi-shapes}, but makes
  it fully depend on $\alpha$, including the order $r = |\alpha|$. On the other hand, the directional
  forms \eqref{eq:psi} constitute an $r$-independent \emph{catalogue}
  (Definition~\ref{def:catalog}) that is sufficient to assemble the polynomial set and basis. The upcoming relabelling theorems still apply to the bases from \cite{ArnoldFalkWinther2009} for two main reasons, that
  the weight commutes with relabelling,
  $(\ind{\pi(\alpha)})_{\pi(k)} = (\ind{\alpha})_k$ and
  $(\pi(\alpha))_{\pi(k)} / |\pi(\alpha)| = \alpha_k / |\alpha|$, and that the weights over the face
  sum to one.
\end{remark}

\begin{theorem}[spanning]\label{thm:spanning}
  $\spn \Sset(\Delta_\xi, r) = \PrL (\Delta)$ for every $r \geq 1$.
\end{theorem}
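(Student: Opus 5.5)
The inclusion $\spn \Sset(\Delta_\xi, r) \subseteq \PrL(\Delta)$ is immediate. Each $w(\xi; f, k, \alpha)$ is a homogeneous monomial $\xi^\alpha$ of degree $r$ times a constant-coefficient combination of the $\dd \xi_j$. By \eqref{eq:full-space} it therefore lies in $\PrL$. The content of the theorem is the reverse inclusion. For it, we show that every generator $\xi^\alpha \, \dd \xi_i$ with $|\alpha| = r$ and $0 \le i \le D$ lies in the span. We fix $\alpha$ and write $g = \supp(\alpha)$, which is nonempty since $r \ge 1$. We then split into two cases according to whether $i \notin g$ or $i \in g$.

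\textbf{Case $i \notin g$.} Put $f = g \cup \{i\}$, so that $\dim f \geq 1$, $\supp(\alpha) \cup \{i\} = f$ and $\alpha_i = 0$. By the second shape of \eqref{eq:psi-shapes},
\[ w(\xi; f, i, \alpha) = \xi^\alpha \, \dd \xi_i , \]
so this generator belongs to $\Sset(\Delta_\xi, r)$ directly.

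\textbf{Case $i \in g$, with $|g| \ge 2$.} Put $f = g$. The first shape of \eqref{eq:psi-shapes} gives
\[ w(\xi; g, i, \alpha) = \xi^\alpha \Bigl(\dd \xi_i - \tfrac{1}{|g|} \sum_{j \in g} \dd \xi_j\Bigr). \]
It therefore suffices to show that $\xi^\alpha \sum_{j \in g} \dd \xi_j$ lies in the span. By the relation \eqref{eq:ambient-relation},
\[ \sum_{j \in g} \dd \xi_j = -\sum_{m \notin g} \dd \xi_m . \]
Each term $\xi^\alpha \, \dd \xi_m$ with $m \notin g$ belongs to the span by the first case. If $g$ is the whole label set, the sum is simply zero. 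Hence $\xi^\alpha \, \dd \xi_i = w(\xi; g, i, \alpha) + \tfrac{1}{|g|}\, \xi^\alpha \sum_{j\in g} \dd\xi_j$ lies in the span.

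\textbf{Case $g = \{i\}$.} This is the main obstacle, because $\alpha = r e_i$ and no triplet $(f, i, \alpha)$ exists: it would require $f = \{i\}$, which has dimension $0$. Here I would again use \eqref{eq:ambient-relation}:
\[ \xi_i^r \, \dd \xi_i = -\sum_{m \neq i} \xi_i^r \, \dd \xi_m . \]
Each term on the right falls under the first case with $g = \{i\}$ and $m \notin g$. It is realised by the triplet $(\{i, m\}, m, r e_i)$, which is valid because $D \ge 1$ guarantees that some $m \ne i$ exists. This closes the argument. The only delicate points are the bookkeeping for $\dim f \ge 1$ and the use of the ambient relation to eliminate the "diagonal" differentials. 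No dimension count is needed, since spanning is all that is claimed.
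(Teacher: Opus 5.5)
Your proof is correct and follows the paper's argument essentially verbatim. It uses the same three cases, $\alpha_i = 0$; $\alpha_i > 0$ with $|\supp(\alpha)| \geq 2$; and $\alpha = r\,1_i$: the first is realised directly by the second shape of \eqref{eq:psi-shapes}, and the other two are reduced to it through the ambient relation \eqref{eq:ambient-relation}.
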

\begin{proof}
  The inclusion $\spn \Sset(\Delta_\xi, r) \subseteq \PrL (\Delta)$ is straightforward,
  since every $w(\xi; f, k, \alpha)$ is a combination of the generators
  $\xi^\alpha \, \dd \xi_j$ of \eqref{eq:full-space}. For the converse inclusion, by
  \eqref{eq:full-space} it suffices to express each form $\xi^\alpha \, \dd \xi_i$
  with $|\alpha| = r$ in $\spn \Sset$. Given such a form, we set
  $f = \supp(\alpha) \cup \{i\}$ and distinguish three cases.

  Assume first that $\alpha_i = 0$. Then $\supp(\alpha) = f \setminus i$, and $|f| \geq 2$ because
  $r \geq 1$ forces $\supp(\alpha) \neq \emptyset$. The second shape of \eqref{eq:psi-shapes} gives
  $\psi(\xi; f, i, \ind{\alpha}) = \dd \xi_i$, and therefore
  $\xi^\alpha \, \dd \xi_i = w(\xi; f, i, \alpha)$ belongs to the spanning set.

  Assume next that $\alpha_i > 0$ and $|f| = 1$. Then $\alpha = r \, 1_i$, and the
  ambient relation \eqref{eq:ambient-relation} yields
  $\xi^\alpha \, \dd \xi_i = - \sum_{j \neq i} \xi^\alpha \, \dd \xi_j$. Every summand on the
  right has $\alpha_j = 0$, so it is covered by the first case.

  Assume finally that $\alpha_i > 0$ and $|f| \geq 2$. Then $\supp(\alpha) = f$, and the
  first shape of \eqref{eq:psi-shapes} yields, after multiplication by $\xi^\alpha$ and
  rearrangement,
  \begin{equation}
    \xi^\alpha \, \dd \xi_i = w(\xi; f, i, \alpha) + \frac{1}{|f|} \xi^\alpha \sum_{j \in f} \dd \xi_j
    = w(\xi; f, i, \alpha) - \frac{1}{|f|} \sum_{j \notin f} \xi^\alpha \, \dd \xi_j,
  \end{equation}
  where the last equality uses \eqref{eq:ambient-relation} in the form
  $\sum_{j \in f} \dd \xi_j = - \sum_{j \notin f} \dd \xi_j$. Every term of the last sum has
  $j \notin f = \supp(\alpha)$, so it is covered by the first case, and the proof is
  complete.
\end{proof}

\begin{remark}[$r = 0$]
  We require $r \geq 1$ throughout. $\mathcal{P}_0 \Lambda^1$ admits no face-owned
  geometric decomposition for $D \geq 2$, because the edge-owned functions alone would number $\binom{D+1}{2} > D = \dim \mathcal{P}_0 \Lambda^1$. Note that
  $\Sset(\Delta_\xi, 0) = \emptyset$.
\end{remark}

\subsection{Rotating basis}\label{sec:full-basis}

\begin{definition}[basis filter]\label{def:filter}
  The directional form $\psi(\xi; f, k, s)$ is \emph{anchored} if
  \begin{equation}\label{eq:filter}
    s_i = 0 \quad \text{for all} \quad i < \min(f \setminus k).
  \end{equation}
  $\Bset(\Delta_\xi, r)$ is defined to be the set of triplets $(f, k, \alpha) \in \Sset(\Delta_\xi, r)$ whose
  directional factor $\psi(\xi; f, k, \ind{\alpha})$ is anchored.
\end{definition}

\begin{theorem}[filter characterisation]\label{thm:U1}
  The directional form $\psi(\xi; f, k, s)$ fails to be anchored if and only
  if $\supp(s) = f$ and $k = \min f$. In particular, a triplet
  $(f, k, \alpha) \in \Sset(\Delta_\xi, r)$ lies in $\Bset(\Delta_\xi, r)$ if and only if
  $\supp(\alpha) \neq f$ or $k \neq \min f$.
\end{theorem}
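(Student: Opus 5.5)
The argument is a direct unpacking of Definition~\ref{def:filter} using the constraint $\supp(s) \cup \{k\} = f$ from Definition~\ref{def:psi}. Throughout, write $m = \min(f \setminus k)$, which is well defined because $\dim f \geq 1$ makes $f \setminus k$ nonempty. By that constraint, $\supp(s) \subseteq f$, and $\supp(s)$ is either $f$ or $f \setminus k$, as recorded in \eqref{eq:psi-shapes}. The form fails to be anchored exactly when some $i < m$ has $s_i = 1$. Any such $i$ must lie in $\supp(s) \subseteq f$ and satisfy $i < m$. Since $m$ is the smallest label of $f \setminus k$, every element of $f$ other than $k$ is $\geq m$, so the only possible such $i$ is $i = k$, and then only if $k < m$.

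The first claim then follows in two steps. For sufficiency, suppose $\supp(s) = f$ and $k = \min f$. Then $s_k = 1$, and $k < m$ because $k$ is the minimum of $f$ and $k \notin f \setminus k$. So \eqref{eq:filter} is violated at $i = k$.

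For necessity, suppose the form is not anchored. The previous paragraph forces the offending index to be $i = k$ with $s_k = 1$ and $k < m$. The condition $s_k = 1$ means $k \in \supp(s)$, so we are in the case $\supp(s) = f$ rather than $f \setminus k$. The condition $k < \min(f \setminus k)$ means $k = \min f$.

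For the second claim, specialise to $s = \ind{\alpha}$, so that $\supp(s) = \supp(\alpha)$. By Definition~\ref{def:filter}, a triplet $(f,k,\alpha) \in \Sset(\Delta_\xi, r)$ lies in $\Bset(\Delta_\xi, r)$ exactly when $\psi(\xi; f, k, \ind{\alpha})$ is anchored. The first claim turns this into the negation of ``$\supp(\alpha) = f$ and $k = \min f$'', which is the stated disjunction by De Morgan.

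There is no real obstacle here. The only point needing care is the observation that the offending index must equal $k$, since it is the sole label of $f$ that can lie below $\min(f \setminus k)$. Everything else is bookkeeping.
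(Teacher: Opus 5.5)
Your proof is correct and follows essentially the same direct unpacking of Definition~\ref{def:filter} as the paper's. The paper splits into the three cases $\supp(s) = f \setminus k$, $\supp(s) = f$ with $k > \min f$, and $\supp(s) = f$ with $k = \min f$; your observation that $k$ is the only label of $f$ that can lie below $\min(f \setminus k)$ handles the first two cases in one step, and the specialisation to $s = \ind{\alpha}$ is the same in both.
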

\begin{proof}
  If $\supp(s) \neq f$, then $\supp(s) = f \setminus k$, so
  $\min(f \setminus k) = \min \supp(s)$ and every $i \in \supp(s)$ satisfies
  $i \geq \min(f \setminus k)$, so that \eqref{eq:filter} holds. If $\supp(s) = f$ and
  $k > \min f$, then $\min(f \setminus k) = \min f = \min \supp(s)$ and the same argument
  applies. If $\supp(s) = f$ and $k = \min f$, then $s_{\min f} = 1$ with
  $\min f < \min(f \setminus k)$, violating \eqref{eq:filter}. The triplet statement follows
  with $s = \ind{\alpha}$, since $\supp(\ind{\alpha}) = \supp(\alpha)$.
\end{proof}

\begin{definition}[directional catalogue]\label{def:catalog}
  The \emph{directional catalogue} $\Psi(\Delta_\xi)$ is the family of anchored directional
  forms, indexed by their arguments $(f, k, s)$.
\end{definition}

\begin{remark}
  The catalogue is not a set since some of its elements may be
  identical. The second shape of \eqref{eq:psi-shapes} does not depend on $f$ or $s$,
  so any two faces containing $k$ contribute the same form $\dd \xi_k$. By Theorem~\ref{thm:U1}, for each face $f$ the catalogue contains the
  $2 \dim f + 1$ forms with $\supp(s) = f \setminus k$, $k \in f$, and with
  $\supp(s) = f$, $k \in f \setminus \min f$. The catalogue is finite and $r$-independent.
\end{remark}

\begin{lemma}[resummation of the directional forms]\label{lem:psi-resummation}
  Let $\supp(s) = f$ and $k \in f$. Then
  \begin{equation}\label{eq:psi-resummation}
    \psi(\xi; f, k, s) = - \sum_{v \in f \setminus k} \psi(\xi; f, v, s).
  \end{equation}
  Additionally, every summand in the right-hand side is anchored when $k = \min f$.
\end{lemma}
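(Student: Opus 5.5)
The identity \eqref{eq:psi-resummation} is linear algebra in the barycentric differentials, so the plan is to sum the directional forms over the whole face and then isolate the term indexed by $k$. Since $\supp(s) = f$, every $v \in f$ gives a triplet $(f, v, s)$ satisfying $\supp(s) \cup \{v\} = f$ with $s_v = 1$. Each such form therefore has the first shape of \eqref{eq:psi-shapes}:
\begin{equation}
  \psi(\xi; f, v, s) = \dd \xi_v - \frac{1}{|f|} \sum_{j \in f} \dd \xi_j .
\end{equation}
Here $|s| = |f|$ because $s$ is an indicator vector with support exactly $f$.

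Summing over all $v \in f$ gives
\begin{equation}
  \sum_{v \in f} \psi(\xi; f, v, s) = \sum_{v \in f} \dd \xi_v - \frac{|f|}{|f|} \sum_{j \in f} \dd \xi_j = 0 .
\end{equation}
Moving the $v = k$ term to the other side gives \eqref{eq:psi-resummation} directly. This step does not use the ambient relation \eqref{eq:ambient-relation}; the cancellation happens within the face, because the weights $s_v/|s|$ over $f$ sum to one.

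For the anchoring claim, take $k = \min f$ and $v \in f \setminus k$, so $v \neq \min f$. Theorem~\ref{thm:U1} says that $\psi(\xi; f, v, s)$ fails to be anchored only when both $\supp(s) = f$ and $v = \min f$. The second condition fails, so every summand is anchored. One can also check \eqref{eq:filter} directly: $\min(f \setminus v) = \min f$, and $s_i = 0$ for $i < \min f$ because $\supp(s) = f$.

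There is no real obstacle here. The only point needing care is the normalisation $|s| = |f|$, which is what makes the summed weights cancel exactly. It follows from $s$ being an indicator vector with support equal to $f$.
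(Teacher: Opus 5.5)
Your proposal is correct and follows essentially the same route as the paper. The paper sums $\psi(\xi; f, v, s)$ over $v \in f \setminus k$ rather than over all of $f$, which is the same computation. It likewise obtains the anchoring claim from Theorem~\ref{thm:U1}, since $v \neq \min f$.
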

\begin{proof}
  Since $\supp(s) = f$, $(f, v, s)$ are valid parameters of $\psi$ for every $v \in f$. By
  \eqref{eq:psi-shapes}, $\psi(\xi; f, v, s) = \dd \xi_v - \frac{1}{|f|} \sum_{j \in f} \dd \xi_j$.
  Summing over $v \in f \setminus k$,
  \begin{equation}
    \sum_{v \in f \setminus k} \psi(\xi; f, v, s)
    = \sum_{v \in f \setminus k} \dd \xi_v - \frac{|f| - 1}{|f|} \sum_{j \in f} \dd \xi_j
    = - \dd \xi_k + \frac{1}{|f|} \sum_{j \in f} \dd \xi_j
    = - \psi(\xi; f, k, s).
  \end{equation}
  For $k = \min f$, each $(f, v, s)$ with $v \in f \setminus \min f$ is anchored by
  Theorem~\ref{thm:U1}.
\end{proof}

\begin{theorem}[resummation identity]\label{thm:U2}
  Let $(f, k, \alpha) \in \Sset(\Delta_\xi, r) \setminus \Bset(\Delta_\xi, r)$, i.e., $\supp(\alpha) = f$ and $k = \min f$.
  Then
  \begin{equation}\label{eq:resummation}
    w(\xi; f, k, \alpha) = - \sum_{v \in f \setminus k} w(\xi; f, v, \alpha),
  \end{equation}
  where every triplet $(f, v, \alpha)$ for $v \in f \setminus k$ belongs to
  $\Bset(\Delta_\xi, r)$.
\end{theorem}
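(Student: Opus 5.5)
The plan is to multiply the resummation identity for directional forms, Lemma~\ref{lem:psi-resummation}, by the barycentric monomial $\BB(\xi;\alpha)$, and then check that every term produced is a legitimate triplet of $\Bset(\Delta_\xi, r)$.

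First I would fix the setting. Since $(f,k,\alpha)$ lies in $\Sset(\Delta_\xi, r)\setminus\Bset(\Delta_\xi, r)$, Theorem~\ref{thm:U1} gives $\supp(\alpha)=f$ and $k=\min f$. Put $s=\ind{\alpha}$. Then $\supp(s)=f$, so the hypotheses of Lemma~\ref{lem:psi-resummation} are met, and that lemma yields
\[
\psi(\xi;f,k,\ind{\alpha}) = -\sum_{v\in f\setminus k}\psi(\xi;f,v,\ind{\alpha}).
\]
Multiplying both sides by $\BB(\xi;\alpha)$ and applying the definition \eqref{eq:w} on each side gives \eqref{eq:resummation} directly.

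It remains to check membership of each summand. For $v\in f\setminus k$ I would verify that $(f,v,\alpha)\in\Sset(\Delta_\xi, r)$. This holds because $\dim f\ge 1$, $v\in f$, $|\alpha|=r$, and $\supp(\alpha)\cup\{v\}=f\cup\{v\}=f$. The triplet is moreover in $\Bset(\Delta_\xi, r)$. One route is the second part of Lemma~\ref{lem:psi-resummation}, which says each $\psi(\xi;f,v,\ind{\alpha})$ is anchored when $k=\min f$. Equivalently, Theorem~\ref{thm:U1} applies, since $v\neq\min f$.

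There is no real obstacle here. The only point needing care is that the sum is nonempty and well formed. This follows because $\dim f\ge1$ gives $|f\setminus k|\ge1$, and because $f$ is the same face for every summand, which keeps $\supp(\alpha)\cup\{v\}=f$ valid for each $v$.
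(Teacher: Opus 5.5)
Your proposal is correct and follows essentially the same route as the paper: multiply Lemma~\ref{lem:psi-resummation} at $s = \ind{\alpha}$ by $\BB(\xi; \alpha)$, then get membership in $\Bset(\Delta_\xi, r)$ from the anchoring of the summands. Your explicit check that each $(f, v, \alpha)$ satisfies $\supp(\alpha) \cup \{v\} = f$, and hence lies in $\Sset(\Delta_\xi, r)$, is a detail the paper's two-line proof leaves implicit.
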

\begin{proof}
  Multiply \eqref{eq:psi-resummation}, at $s = \ind{\alpha}$, by $\BB(\xi; \alpha)$. The summands belong to $\Bset$ since their directional forms are anchored as in Lemma~\ref{lem:psi-resummation}.
\end{proof}

\begin{lemma}[cardinality]\label{lem:count}
  For a face $f$ with $d = \dim f$, the triplets of $\Bset(\Delta_\xi, r)$ owned by $f$
  number $(r + 1) \binom{r - 1}{d - 1}$, and in total
  \begin{equation}\label{eq:count}
    \# \Bset(\Delta_\xi, r) = D \binom{r + D}{D} = \dim \PrL.
  \end{equation}
\end{lemma}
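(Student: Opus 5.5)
The plan is a direct count: split the triplets owned by a fixed face according to the two shapes of \eqref{eq:psi-shapes}, use Theorem~\ref{thm:U1} to decide which triplets survive the filter, and then sum over faces with a Vandermonde identity.

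Fix a face $f$ with $|f| = d+1$. By Definition~\ref{def:spanning}, a triplet $(f,k,\alpha)$ owned by $f$ satisfies $\supp(\alpha) \cup \{k\} = f$ and $|\alpha| = r$. Theorem~\ref{thm:U1} says it lies in $\Bset(\Delta_\xi, r)$ exactly when $\supp(\alpha) \neq f$ or $k \neq \min f$. This splits the owned triplets into two disjoint classes.
\begin{enumerate}
\item[(A)] $\supp(\alpha) = f \setminus k$. All such triplets are kept. There are $d+1$ choices of $k$. For each $k$, $\alpha$ is a composition of $r$ into exactly $d$ positive parts on $f \setminus k$, giving $\binom{r-1}{d-1}$ choices.
\item[(B)] $\supp(\alpha) = f$ with $k \neq \min f$. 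There are $d$ choices of $k$. For each $k$, $\alpha$ is a composition of $r$ into $d+1$ positive parts, giving $\binom{r-1}{d}$ choices.
\end{enumerate}
The per-face count is therefore
\[
(d+1)\binom{r-1}{d-1} + d\binom{r-1}{d}.
\]
Using $d\binom{r-1}{d} = (r-d)\binom{r-1}{d-1}$, this collapses to $(r+1)\binom{r-1}{d-1}$. With the convention $\binom{n}{m} = 0$ for $m > n$, the formula stays valid when $d > r$: both counts vanish because there are too few units of degree to fill the support.

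For the total, I sum over faces. There are $\binom{D+1}{d+1}$ faces of dimension $d$, for $1 \le d \le D$, so
\[
\# \Bset(\Delta_\xi, r) = (r+1) \sum_{d=1}^{D} \binom{D+1}{D-d}\binom{r-1}{d-1}.
\]
The lower indices add up to $(D-d) + (d-1) = D-1$, independently of $d$. Vandermonde's convolution then gives $\binom{D+r}{D-1}$. Finally,
\[
(r+1)\binom{r+D}{D-1} = \frac{(r+D)!}{(D-1)!\,r!} = D\binom{r+D}{D},
\]
which equals $\dim \PrL$ by the formula recalled after \eqref{eq:full-space}.

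None of the steps is a real obstacle. The points needing care are checking that classes (A) and (B) are disjoint and exhaust the anchored triplets, which is exactly Theorem~\ref{thm:U1}, and handling the boundary terms $d > r$ in the Vandermonde sum.
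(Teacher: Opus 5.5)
Your proof is correct. The per-face count is the same as the paper's: the same split via Theorem~\ref{thm:U1} into $\supp(\alpha) = f \setminus k$ and $\supp(\alpha) = f$ with $k \neq \min f$, followed by the same absorption identity.

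You depart from the paper in the total. You evaluate $\sum_{d=1}^{D} \binom{D+1}{d+1}\binom{r-1}{d-1}$ by Vandermonde's convolution and then check $(r+1)\binom{r+D}{D-1} = D\binom{r+D}{D}$ directly. The paper never evaluates that sum. It identifies $(r+1)\binom{r-1}{d-1}$ with the dimension of the space of degree-$r$ 1-forms on a $d$-simplex with vanishing boundary trace \cite[Corollary 5.2]{ArnoldFalkWinther2009}. It then uses the geometric decomposition of $\PrL$ into a direct sum of these bubble spaces over the faces to conclude that the sum is $\dim \PrL$.

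Your route is self-contained and purely combinatorial. It needs only the recalled formula $\dim \PrL = D\binom{r+D}{D}$, and it is the same argument the paper uses for the trimmed space in Lemma~\ref{lem:trimmed-count}. The paper's route relies on an external theorem for the summation. In exchange it shows that each per-face count equals a bubble-space dimension. The paper reuses that fact in Remark~\ref{rem:extension} to show that the functions owned by $f$ form a basis of the bubble space of $f$. Your argument does not give this identification, but the lemma as stated does not require it.
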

\begin{proof}
  Fix $f$ with $|f| = d + 1$. By Theorem~\ref{thm:U1} the elements of $\Bset$ owned by $f$
  are: (a) $\supp(\alpha) = f \setminus k$ with $k \in f$ arbitrary, giving $(d+1)$ choices of $k$
  times the number of multi-exponents $\alpha$ with $|\alpha| = r$ and $\supp(\alpha) = f \setminus k$, that is, of ways of writing $r$ as an ordered sum of $d$ positive integers, which is $\binom{r - 1}{d - 1}$; and (b) $\supp(\alpha) = f$ with $k \in f \setminus \min f$, giving
  $d$ choices of $k$ times the $\binom{r - 1}{d}$ multi-exponents with $|\alpha| = r$ and $\supp(\alpha) = f$.
  The absorption identity $d \binom{r - 1}{d} = (r - d) \binom{r - 1}{d - 1}$
  collapses this count to
  \[ (d + 1) \binom{r - 1}{d - 1} + d \binom{r - 1}{d} = (r + 1) \binom{r - 1}{d - 1}, \]
  which is the dimension of the space of 1-forms of degree $r$ on a $d$-simplex
  with vanishing trace on its boundary
  \cite[Corollary 5.2 and eq.~(3.7)]{ArnoldFalkWinther2009}.
  The geometric decomposition of \cite[Section 8]{ArnoldFalkWinther2009} expresses
  $\PrL$ as the direct sum of those spaces over the faces of $\Delta_\xi$, so summing
  over the $\binom{D + 1}{d + 1}$ faces of each dimension gives
  \[ \# \Bset(\Delta_\xi, r) = \sum_{d = 1}^{D} \binom{D + 1}{d + 1} \, (r + 1) \binom{r - 1}{d - 1} = \dim \PrL. \]
\end{proof}

\begin{corollary}[basis]\label{cor:basis}
  $\Bset(\Delta_\xi, r)$ is a basis of $\PrL (\Delta)$.
\end{corollary}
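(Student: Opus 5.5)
The plan is the standard ``spanning plus counting'' argument, using results already established.

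\emph{Step 1: $\Bset(\Delta_\xi, r)$ spans $\PrL(\Delta)$.} By Theorem~\ref{thm:spanning}, $\spn \Sset(\Delta_\xi, r) = \PrL(\Delta)$, so it suffices to show $\Sset(\Delta_\xi, r) \subseteq \spn \Bset(\Delta_\xi, r)$. Take $(f, k, \alpha) \in \Sset$. If it lies in $\Bset$ there is nothing to prove. Otherwise Theorem~\ref{thm:U1} gives $\supp(\alpha) = f$ and $k = \min f$. Theorem~\ref{thm:U2} then writes $w(\xi; f, k, \alpha)$ as a signed sum of the forms $w(\xi; f, v, \alpha)$ with $v \in f \setminus k$, and each of these triplets lies in $\Bset$. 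Hence every element of the spanning set is a combination of elements of $\Bset$.

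\emph{Step 2: counting.} Lemma~\ref{lem:count} gives $\# \Bset(\Delta_\xi, r) = D \binom{r+D}{D} = \dim \PrL$. Once this is known, a spanning family whose cardinality equals the dimension is linearly independent, and is therefore a basis.

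\emph{Main obstacle.} The subtle point is that $\Bset$ is indexed by triplets, while the catalogue is not a set: distinct directional factors $\dd \xi_k$ can coincide. We must make sure that distinct triplets of $\Bset$ give distinct forms, so that ``$\#\Bset$'' really counts a family of vectors and no two members collapse.

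Distinctness is automatic from the count. If two distinct triplets produced the same form, the span would be generated by fewer than $\dim \PrL$ vectors, contradicting Step 1. We can also see it directly. The leading monomial content $\xi^\alpha \dd\xi_k$ together with the face $f = \supp(\alpha) \cup \{k\}$ determines the triplet up to the shape. On the same $\alpha$, the pairs with $\alpha_k = 0$ and with $\alpha_k > 0$ differ in their $\dd\xi_k$ coefficient.

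Thus the family, read as an indexed list, spans a space of dimension $\#\Bset$. It is therefore linearly independent, and the corollary follows.
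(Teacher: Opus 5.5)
Your proof is correct and follows the paper's route. Theorem~\ref{thm:U2} (via Theorem~\ref{thm:U1}) places $\Sset \setminus \Bset$ inside $\spn \Bset$, Theorem~\ref{thm:spanning} then gives $\spn \Bset = \PrL$, and Lemma~\ref{lem:count} concludes by cardinality; distinctness of the forms follows from the count, exactly as the paper remarks.

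Your ``direct'' distinctness sketch is unnecessary. As phrased it is also not rigorous, because the ambient relation $\sum_i \dd \xi_i = 0$ means a ``$\dd \xi_k$ coefficient'' is not well defined unless you first fix a representation.
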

\begin{proof}
  By Theorem~\ref{thm:U2} every element of $\Sset \setminus \Bset$ lies in $\spn \Bset$, so
  $\spn \Bset = \spn \Sset = \PrL$ by Theorem~\ref{thm:spanning}. By Lemma~\ref{lem:count} the
  spanning family $\Bset$ has cardinality $\dim \PrL$, hence is a basis. In particular, distinct triplets of $\Bset(\Delta_\xi, r)$ give distinct forms.
\end{proof}

\begin{remark}
  The index set of $\Bset(\Delta_\xi, r)$ is that of the basis of $\PrL$ in \cite[Section 9]{ArnoldFalkWinther2009}, whose anchoring condition is \eqref{eq:filter}, and its cardinality is computed there as well; only the weight in the directional forms differs. Since the functions differ, spanning has to be established for the present basis, which is what Theorem~\ref{thm:U2} and Lemma~\ref{lem:count} do.
\end{remark}

\begin{lemma}[face ownership]\label{lem:trace}
  Let $(f, k, \alpha) \in \Sset(\Delta_\xi, r)$ and let $g$ be a face of $\Delta_\xi$ that does not
  contain $f$. Then $\trs_g w(\xi; f, k, \alpha) = 0$.
\end{lemma}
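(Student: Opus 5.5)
Since $g$ does not contain $f$, there is a label $m \in f \setminus g$. Because $\xi_m$ vanishes identically on the face $g$, its trace satisfies $\trs_g \xi_m = 0$, and hence also $\trs_g \dd \xi_m = \dd(\trs_g \xi_m) = 0$. The plan is to use the commutation of the trace with products and with $\dd$, and to show in every case that either the scalar factor $\BB(\xi; \alpha)$ or the directional factor $\psi(\xi; f, k, \ind{\alpha})$ has vanishing trace on $g$.

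\textbf{Case 1: $m \in \supp(\alpha)$.} Then $\alpha_m \geq 1$, so $\xi^\alpha$ contains the factor $\xi_m$. It follows that $\trs_g \BB(\xi; \alpha) = 0$, and therefore
\[
\trs_g w = (\trs_g \BB)\,(\trs_g \psi) = 0.
\]

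\textbf{Case 2: $m \notin \supp(\alpha)$.} Since $\supp(\alpha) \cup \{k\} = f$ and $m \in f$, this forces $m = k$ and $\alpha_k = 0$. We are then in the second shape of \eqref{eq:psi-shapes}, so $\psi = \dd \xi_k = \dd \xi_m$. Its trace on $g$ vanishes by the observation in the first paragraph, and so $\trs_g w = 0$.

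These two cases are exhaustive, which proves the lemma. The only step needing care is the use of the covering condition $\supp(\alpha) \cup \{k\} = f$ in Case 2. It is what rules out the first shape of $\psi$, that is $\dd\xi_k - \frac{1}{|f|}\sum_{j \in f} \dd\xi_j$, in the situation where the monomial does not vanish on $g$. That shape occurs only when $\supp(\alpha) = f$, and then every $m \in f \setminus g$ already falls into Case 1.
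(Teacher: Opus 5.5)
Your proof is correct and follows the paper's argument essentially verbatim. You pick a label of $f$ outside $g$. If it lies in $\supp(\alpha)$, the monomial vanishes on $g$. Otherwise the covering condition $\supp(\alpha) \cup \{k\} = f$ forces that label to be $k$ with $\alpha_k = 0$, so $\psi = \dd \xi_k$, whose trace on $g$ is zero.
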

\begin{proof}
  Since $f = \supp(\alpha) \cup \{k\}$ is not contained in $g$, some label $i$ of $f$ lies outside $g$, and $\xi_i$ vanishes identically on $g$. If $i \in \supp(\alpha)$, the monomial $\BB(\xi; \alpha)$ vanishes on $g$ and so does the trace. Otherwise $i = k$ with $\alpha_k = 0$, the second shape of \eqref{eq:psi-shapes} gives $\psi(\xi; f, k, \ind{\alpha}) = \dd \xi_k$, and $\trs_g \dd \xi_k = 0$ because $\xi_k$ vanishes on $g$.
\end{proof}

\begin{remark}[extension operators]\label{rem:extension}
  For faces $f \subseteq g$, let $E_{f, g}$ be the barycentric extension of \cite[Section 2.3]{ArnoldFalkWinther2009}: an expression in the barycentric coordinates of $f$ is read as the same expression in those of $g$, so that $\trs_f E_{f, g} = \mathrm{id}$. Since \eqref{eq:w} involves only the labels of $f$, every basis function owned by $f$ is the barycentric extension of its trace on $f$. Those traces are the elements of $\Bset(f, r)$ whose face is $f$ itself. They are a subfamily of a basis of $\PrL(f)$, by Corollary~\ref{cor:basis} applied on the simplex $f$, hence linearly independent, and by Lemma~\ref{lem:trace} applied on $f$ they have zero trace on every proper face of $f$. They therefore lie in the bubble space of $f$, the subspace of $\PrL(f)$ with vanishing trace on $\partial f$, whose dimension $(r + 1) \binom{r - 1}{d - 1}$ equals their number by Lemma~\ref{lem:count}, so they form a basis of it. Together with Lemma~\ref{lem:trace},
  this makes $\Bset(\Delta_\xi, r)$ a geometrically decomposed basis in the sense of
  \cite[Section 4]{ArnoldFalkWinther2009}.
\end{remark}

\subsection{Pullback under vertex relabellings}\label{sec:full-pullback}

\begin{lemma}[rotation of $\psi$]\label{lem:psi-equivariance}
  For every admissible argument of Definition~\ref{def:psi} and every $\pi \in S_{D+1}$,
  \begin{equation}
    A_{\pi^{-1}}^* (\psi(\xi; f, k, s)) = \psi(\lambda; \pi(f), \pi(k), \pi(s)).
  \end{equation}
  In particular
  $A_{\pi^{-1}}^* (\psi(\xi; f, k, \ind{\alpha})) = \psi(\lambda; \pi(f), \pi(k), \ind{\pi(\alpha)})$,
  by the identity $\ind{\pi(\alpha)} = \pi(\ind{\alpha})$ of \sect{sec:prelim-simplex}.
\end{lemma}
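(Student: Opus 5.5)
The plan is a direct substitution computation based on \eqref{eq:relabel}: the pullback $A_{\pi^{-1}}^*$ acts on $\psi(\xi; f, k, s)$ by replacing $\dd \xi_i$ with $\dd \lambda_{\pi(i)}$. Since the definition \eqref{eq:psi} is linear in the differentials and has constant scalar coefficients, the pullback passes through the expression term by term. This gives
\begin{equation}
  A_{\pi^{-1}}^* \psi(\xi; f, k, s) = \dd \lambda_{\pi(k)} - \frac{s_k}{|s|} \sum_{j \in f} \dd \lambda_{\pi(j)}.
\end{equation}

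Next I will re-express each ingredient through the relabelled data. First, reindexing with $j' = \pi(j)$ turns the sum into $\sum_{j' \in \pi(f)} \dd \lambda_{j'}$. Second, \eqref{eq:perm-action} gives $\pi(s)_{\pi(k)} = s_k$. Third, $|\pi(s)| = |s|$, because permuting entries does not change their sum. Substituting these three identities, the right-hand side becomes exactly $\psi(\lambda; \pi(f), \pi(k), \pi(s))$ as defined in \eqref{eq:psi}.

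Before this identification is legitimate, I must check that $(\pi(f), \pi(k), \pi(s))$ is an admissible argument in the sense of Definition~\ref{def:psi}. Applying $\pi$ elementwise to the relation $\supp(s) \cup \{k\} = f$ gives $\supp(\pi(s)) \cup \{\pi(k)\} = \pi(f)$, since $\supp(\pi(s)) = \pi(\supp(s))$. Moreover $\pi(k) \in \pi(f)$, $\pi(s)$ is still an indicator vector, and $\dim \pi(f) = \dim f$ because $\pi$ is a bijection.

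The ``in particular'' clause then follows by setting $s = \ind{\alpha}$ and applying $\ind{\pi(\alpha)} = \pi(\ind{\alpha})$. There is no real obstacle. The one point needing care is the direction of the action: I need $\pi(s)_{\pi(k)} = s_k$ rather than $s_{\pi(k)}$, and this is precisely the convention fixed in \eqref{eq:perm-action}.
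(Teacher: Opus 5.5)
Your proposal is correct and follows the paper's own proof. It substitutes $\dd\xi_i \mapsto \dd\lambda_{\pi(i)}$, reindexes the face sum over $\pi(f)$, and notes that the weight is unchanged because $\pi(s)_{\pi(k)} = s_k$ and $|\pi(s)| = |s|$. Your admissibility check of $(\pi(f), \pi(k), \pi(s))$ is a small addition, which the paper carries out separately at the start of the proof of Theorem~\ref{thm:psi-pullback}.
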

\begin{proof}
  The pullback $A_{\pi^{-1}}^*$ maps $\dd \xi_k \mapsto \dd \lambda_{\pi(k)}$ and
  $\sum_{j \in f} \dd \xi_j \mapsto \sum_{j \in \pi(f)} \dd \lambda_j$. Moreover
  $\pi(s)_{\pi(k)} = s_k$ and $|\pi(s)| = |s|$, so the weight in \eqref{eq:psi} is
  unchanged.
\end{proof}

\begin{theorem}[rotation of the directional catalogue]\label{thm:psi-pullback}
  Let $\psi(\xi; f, k, s)$ be anchored and let $\pi \in S_{D+1}$. Then
  $(\pi(f), \pi(k), \pi(s))$ is an admissible argument of Definition~\ref{def:psi} and exactly one of the following
  holds.
  \begin{enumerate}
  \item \emph{Relabelled}. If $s_k = 0$ or $\pi(k) \neq \min \pi(f)$, then
    $\psi(\lambda; \pi(f), \pi(k), \pi(s))$ is anchored and
    \begin{equation}\label{eq:psi-pullback-single}
      A_{\pi^{-1}}^* (\psi(\xi; f, k, s)) = \psi(\lambda; \pi(f), \pi(k), \pi(s)).
    \end{equation}
  \item \emph{Filter hit.} If $s_k = 1$ and $\pi(k) = \min \pi(f)$, then
    \begin{equation}\label{eq:psi-pullback-hit}
      A_{\pi^{-1}}^* (\psi(\xi; f, k, s)) = - \sum_{v \in \pi(f) \setminus \pi(k)} \psi(\lambda; \pi(f), v, \pi(s)),
    \end{equation}
    and every summand on the right-hand side is anchored.
  \end{enumerate}
  In both cases the transformation stays inside the catalogue $\Psi(\Delta_\lambda)$, depends only on $(f, k, s)$ and $\pi$, and is independent of the degree $r$.
\end{theorem}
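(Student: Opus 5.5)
The plan is to combine the equivariance of Lemma~\ref{lem:psi-equivariance} with the filter characterisation of Theorem~\ref{thm:U1} and the resummation of Lemma~\ref{lem:psi-resummation}, all applied on the relabelled simplex $\Delta_\lambda$.

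\textbf{Admissibility.} First we check that $(\pi(f), \pi(k), \pi(s))$ is an admissible argument of Definition~\ref{def:psi}. Since $\pi$ is a bijection of the labels, $\pi(k) \in \pi(f)$ and $\dim \pi(f) = \dim f \geq 1$. Moreover $\supp(\pi(s)) = \pi(\supp(s))$, because $\pi(s)_{\pi(i)} = s_i$. Hence
\[
\supp(\pi(s)) \cup \{\pi(k)\} = \pi\bigl(\supp(s) \cup \{k\}\bigr) = \pi(f).
\]
Lemma~\ref{lem:psi-equivariance} then gives, in every case, $A_{\pi^{-1}}^* \psi(\xi; f,k,s) = \psi(\lambda; \pi(f), \pi(k), \pi(s))$. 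What remains is to decide whether this relabelled form is anchored.

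\textbf{Case split.} Apply Theorem~\ref{thm:U1} on $\Delta_\lambda$: the relabelled form fails to be anchored iff $\supp(\pi(s)) = \pi(f)$ and $\pi(k) = \min \pi(f)$. Since $\pi$ is injective, $\supp(\pi(s)) = \pi(f)$ iff $\supp(s) = f$.

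By the dichotomy \eqref{eq:psi-shapes}, $\supp(s) = f$ holds iff $s_k = 1$. Indeed, $\supp(s)$ is either $f$ or $f\setminus k$, and $k \in \supp(s)$ iff $s_k = 1$. Therefore the relabelled form is anchored exactly when $s_k = 0$ or $\pi(k) \neq \min\pi(f)$. This gives case (1), and the two cases are clearly exclusive and exhaustive. Note that the hypothesis that the original form is anchored is not actually needed for this classification; it only fixes the domain of the statement.

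\textbf{Filter hit.} In case (2) we have $\supp(\pi(s)) = \pi(f)$ and $\pi(k) = \min \pi(f)$. We apply Lemma~\ref{lem:psi-resummation} on $\Delta_\lambda$ with face $\pi(f)$, index $\pi(k)$ and indicator $\pi(s)$. This yields \eqref{eq:psi-pullback-hit}, and the lemma's last clause, applicable because $\pi(k) = \min\pi(f)$, gives that every summand is anchored.

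\textbf{Closing remarks.} In both cases the output consists of anchored forms indexed by arguments on $\Delta_\lambda$, so it stays in $\Psi(\Delta_\lambda)$. The formulas involve only $(f,k,s)$ and $\pi$; no multi-exponent $\alpha$, and hence no $r$, enters them. The only mildly delicate step is the translation between $s_k = 1$ and $\supp(s) = f$ via \eqref{eq:psi-shapes}, together with the injectivity argument for supports. Everything else is direct invocation of earlier results.
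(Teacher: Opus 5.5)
Your proof is correct and follows essentially the same route as the paper. Admissibility comes from $\supp(\pi(s)) = \pi(\supp(s))$, Lemma~\ref{lem:psi-equivariance} gives the pullback, Theorem~\ref{thm:U1} on $\Delta_\lambda$ classifies the cases (using that $\supp(s) = f$ if and only if $s_k = 1$), and Lemma~\ref{lem:psi-resummation} handles the hit case; your observation that the anchoring hypothesis on the original form is never used is also accurate.
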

\begin{proof}
  Admissibility follows from $\pi(f) = \pi(\supp(s) \cup \{k\}) = \supp(\pi(s)) \cup \{\pi(k)\}$. By Lemma~\ref{lem:psi-equivariance}, the pullback is
  $\psi(\lambda; \pi(f), \pi(k), \pi(s))$. By Theorem~\ref{thm:U1} on $\Delta_\lambda$, this form fails to be anchored precisely when $\supp(\pi(s)) = \pi(f)$ and
  $\pi(k) = \min \pi(f)$, that is, when $s_k = 1$ and $\pi(k) = \min \pi(f)$. In that
  case Lemma~\ref{lem:psi-resummation} on $\Delta_\lambda$ applied to $\psi(\lambda; \pi(f), \pi(k), \pi(s))$
  gives \eqref{eq:psi-pullback-hit}. The summands are anchored by
  Theorem~\ref{thm:U1}.
\end{proof}

\begin{theorem}[pullback of the basis functions]\label{thm:U3}
  Let $(f, k, \alpha) \in \Bset(\Delta_\xi, r)$ and $\pi \in S_{D+1}$. Then
  \begin{equation}\label{eq:factorized}
    A_{\pi^{-1}}^* (w(\xi; f, k, \alpha))
    = \BB(\lambda; \pi(\alpha)) \, A_{\pi^{-1}}^* (\psi(\xi; f, k, \ind{\alpha})),
  \end{equation}
  and exactly one of the following holds.
  \begin{enumerate}
  \item \emph{Relabelled}. If $\alpha_k = 0$ or $\pi(k) \neq \min \pi(f)$, then
    \[ A_{\pi^{-1}}^* (w(\xi; f, k, \alpha)) = w(\lambda; \pi(f), \pi(k), \pi(\alpha)), \]
    with $(\pi(f), \pi(k), \pi(\alpha)) \in \Bset(\Delta_\lambda, r)$.
  \item \emph{Filter hit.} If $\alpha_k > 0$ and $\pi(k) = \min \pi(f)$, then
    \[ A_{\pi^{-1}}^* (w(\xi; f, k, \alpha)) = - \sum_{v \in \pi(f) \setminus \pi(k)} w(\lambda; \pi(f), v, \pi(\alpha)), \]
    with $(\pi(f), v, \pi(\alpha)) \in \Bset(\Delta_\lambda, r)$ for all $v$ of the sum.
  \end{enumerate}
\end{theorem}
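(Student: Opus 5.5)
The plan is to reduce everything to the already established rotation of the directional catalogue (Theorem~\ref{thm:psi-pullback}) and the relabelling of monomials (Lemma~\ref{lem:bernstein}). First I prove the factorisation \eqref{eq:factorized}. By \eqref{eq:w}, $w(\xi; f, k, \alpha) = \BB(\xi;\alpha)\,\psi(\xi; f, k, \ind{\alpha})$. Since the pullback is multiplicative on a function times a 1-form, we get $A_{\pi^{-1}}^*(w) = A_{\pi^{-1}}^*(\BB(\xi;\alpha))\, A_{\pi^{-1}}^*(\psi(\xi; f,k,\ind{\alpha}))$. Lemma~\ref{lem:bernstein} then identifies the first factor with $\BB(\lambda;\pi(\alpha))$.

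Next I apply Theorem~\ref{thm:psi-pullback} with $s = \ind{\alpha}$. This $\psi$ is anchored because $(f,k,\alpha)\in\Bset(\Delta_\xi,r)$ (Definition~\ref{def:filter}). The dichotomy of that theorem translates directly, since $s_k = (\ind{\alpha})_k = \iv{\alpha_k>0}$. So the condition ``$s_k=0$ or $\pi(k)\neq\min\pi(f)$'' becomes ``$\alpha_k=0$ or $\pi(k)\neq\min\pi(f)$'', and the filter-hit condition becomes ``$\alpha_k>0$ and $\pi(k)=\min\pi(f)$''. Exactly one of these holds.

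\emph{Relabelled case.} The pullback equals $\BB(\lambda;\pi(\alpha))\,\psi(\lambda;\pi(f),\pi(k),\pi(\ind{\alpha}))$. By Lemma~\ref{lem:psi-equivariance}, $\pi(\ind{\alpha}) = \ind{\pi(\alpha)}$, so this product is $w(\lambda;\pi(f),\pi(k),\pi(\alpha))$. It remains to check membership of $(\pi(f),\pi(k),\pi(\alpha))$ in $\Bset(\Delta_\lambda,r)$.
\begin{itemize}
\item $|\pi(\alpha)|=r$.
\item $\supp\pi(\alpha)\cup\{\pi(k)\} = \pi(\supp\alpha\cup\{k\}) = \pi(f)$, with $\dim\pi(f)=\dim f\ge1$.
\item The triplet is anchored, which is part (1) of Theorem~\ref{thm:psi-pullback}, or equivalently Theorem~\ref{thm:U1}.
\end{itemize}

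\emph{Filter-hit case.} I multiply \eqref{eq:psi-pullback-hit} by $\BB(\lambda;\pi(\alpha))$ and distribute over the sum. Each summand becomes $w(\lambda;\pi(f),v,\pi(\alpha))$, using the same identity $\pi(\ind{\alpha})=\ind{\pi(\alpha)}$. The triplets $(\pi(f),v,\pi(\alpha))$ must again lie in $\Sset(\Delta_\lambda,r)$. Here $\alpha_k>0$, and $\supp\alpha\cup\{k\}=f$ then forces $\supp\alpha=f$, so $\supp\pi(\alpha)=\pi(f)\ni v$, which gives the covering condition. Anchoredness of each summand is given by Theorem~\ref{thm:psi-pullback}(2), or directly by Theorem~\ref{thm:U1}, since $v\neq\min\pi(f)$.

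The only step needing care, and hence the main (mild) obstacle, is this admissibility bookkeeping. One must confirm that the translated conditions on $s=\ind{\alpha}$ match exactly the case split of the statement. One must also confirm that the relabelled triplets satisfy the covering condition $\supp(\cdot)\cup\{k\}=f$ of Definition~\ref{def:spanning}, not merely that of Definition~\ref{def:psi}. Everything else is a direct transcription of earlier results.
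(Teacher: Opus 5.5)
Your proposal is correct and follows the paper's proof. The pullback is multiplicative, Lemma~\ref{lem:bernstein} handles the scalar factor, and Theorem~\ref{thm:psi-pullback} at $s = \ind{\alpha}$, combined with $\pi(\ind{\alpha}) = \ind{\pi(\alpha)}$, gives both the case split and the anchoredness of every right-hand-side triplet. Your explicit modulus and covering checks spell out what the paper compresses into ``since $|\pi(\alpha)| = r$''. The covering condition you flag is automatic, because with $s = \ind{\alpha}$ the covering conditions of Definitions~\ref{def:psi} and~\ref{def:spanning} coincide, so the admissibility clause of Theorem~\ref{thm:psi-pullback} already supplies it.
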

\begin{proof}
  The pullback operation is multiplicative and relabels the barycentric scalar factor by
  Lemma~\ref{lem:bernstein}. The directional factor is transformed by Theorem~\ref{thm:psi-pullback}
  at $s = \ind{\alpha}$, whose two cases give the two cases of the statement,
  via $\ind{\pi(\alpha)} = \pi(\ind{\alpha})$. Using Theorem~\ref{thm:psi-pullback}, the directional forms associated to all right-hand-side triplets are anchored, and since $|\pi(\alpha)| = r$, they are in $\Bset(\Delta_\lambda, r)$.
\end{proof}

\begin{theorem}[same span and combinatorial inverse]\label{thm:U4}
  The families
  \[ \{A_{\pi^{-1}}^* w(\xi; f, k, \alpha) : (f, k, \alpha) \in \Bset(\Delta_\xi, r)\} \quad \text{and} \quad
     \{w(\lambda; f, k, \alpha) : (f, k, \alpha) \in \Bset(\Delta_\lambda, r)\} \]
  span the same space, namely $\PrL (\Delta)$. The change of basis in the reverse
  direction is obtained by applying Theorem~\ref{thm:U3} at $\pi^{-1}$.
\end{theorem}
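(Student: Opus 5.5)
The plan is to combine three facts: the pullback preserves $\PrL$, Corollary~\ref{cor:basis} gives bases on both simplices, and Theorem~\ref{thm:U3} makes the change of basis explicit.

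\textbf{Step 1: the pullback preserves $\PrL$.} The map $A_{\pi^{-1}}^*$ is a linear bijection from forms on $\Delta_\xi$ to forms on $\Delta_\lambda$, with inverse $A_\pi^*$. By \eqref{eq:relabel} it sends each generator $\xi^\alpha \, \dd \xi_i$ of \eqref{eq:full-space} to $\lambda^{\pi(\alpha)} \, \dd \lambda_{\pi(i)}$, which is again a generator of the same type. Hence $A_{\pi^{-1}}^*$ maps $\PrL(\Delta_\xi)$ onto $\PrL(\Delta_\lambda)$. Since both spaces are the same space $\PrL(\Delta)$ written in two coordinate systems, the pulled-back family spans $\PrL(\Delta)$. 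It is moreover a basis, being the image of the basis $\Bset(\Delta_\xi,r)$ of Corollary~\ref{cor:basis} under a linear isomorphism.

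\textbf{Step 2: the $\lambda$-family also spans $\PrL(\Delta)$.} This is Corollary~\ref{cor:basis} applied on $\Delta_\lambda$. Theorem~\ref{thm:U3} confirms the two spans coincide directly: every pulled-back basis function is a signed combination of elements of $\Bset(\Delta_\lambda,r)$. This yields an integer matrix $T(\pi)$ expressing the first basis in terms of the second, and $T(\pi)$ is invertible because both families are bases of the same space.

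\textbf{Step 3: the reverse direction.} The roles of the two coordinate systems are symmetric. Relabelling $\Delta_\lambda$ back to $\Delta_\xi$ is $A_\pi^* = A_{(\pi^{-1})^{-1}}^*$, because $A_\pi^* \lambda_i = \xi_{\pi^{-1}(i)}$. Theorem~\ref{thm:U3} with $\pi$ replaced by $\pi^{-1}$, and the roles of $\xi$ and $\lambda$ swapped, therefore expresses each $A_\pi^* w(\lambda; g, m, \beta)$ as a signed combination of $w(\xi;\cdot)$. Applying $A_{\pi^{-1}}^*$ to that expansion and using $A_{\pi^{-1}}^* A_\pi^* = (A_\pi A_{\pi^{-1}})^* = \mathrm{id}$ gives
\[
w(\lambda; g, m, \beta) = \textstyle\sum c \; A_{\pi^{-1}}^* w(\xi; \cdot).
\]
This writes each $\lambda$-basis function in terms of the pulled-back $\xi$-basis, with coefficients taken from $T(\pi^{-1})$.

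\textbf{Step 4: composing the two expansions.} Composing the expansions of Steps 2 and 3 returns the identity on a basis, which gives $T(\pi)^{-1} = T(\pi^{-1})$. The one point needing care is the bookkeeping of the transposition and orientation convention, that is, whether the inverse appears as $T(\pi^{-1})$ or as its transpose. I expect this to be the main, if mild, obstacle. I would settle it by writing both expansions as row vectors of forms and reading off the composition directly. No other step has a nontrivial difficulty.
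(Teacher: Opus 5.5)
Your argument is correct, but for the span claim it takes a different route from the paper. Write $W_\xi$ and $W_\lambda$ for the two basis families. You get $\spn A_{\pi^{-1}}^* W_\xi = \PrL(\Delta)$ from two ingredients: the ambient space is invariant under relabelling, since each generator $\xi^\alpha \dd\xi_i$ goes to the generator $\lambda^{\pi(\alpha)} \dd\lambda_{\pi(i)}$, bijectively on indices; and Corollary~\ref{cor:basis} holds on both simplices. Theorem~\ref{thm:U3} is therefore not needed for your span statement.

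The paper never invokes this invariance. It uses Theorem~\ref{thm:U3} at $\pi$ to get $A_{\pi^{-1}}^* W_\xi \subseteq \spn W_\lambda$. It then uses Theorem~\ref{thm:U3} at $\pi^{-1}$, with the roles of $\xi$ and $\lambda$ exchanged and pushed through $A_{\pi^{-1}}^*$ exactly as in your Step 3, to get $W_\lambda \subseteq \spn A_{\pi^{-1}}^* W_\xi$. The sandwich $W_\lambda \subseteq \spn A_{\pi^{-1}}^* W_\xi \subseteq \spn W_\lambda$ closes the argument, with Corollary~\ref{cor:basis} needed only on $\Delta_\lambda$.

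Your route is more elementary and shows for free that the pulled-back family is itself a basis. The paper's route shows that the two closed-form expansions alone force the spans to agree, so the span claim and the reverse change of basis come from the same two applications of Theorem~\ref{thm:U3}.

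Your Step 4 goes beyond this statement; it is the content of Corollary~\ref{cor:cob}. The transposition worry also dissolves. With rows indexed by the source function in both expansions, substituting one into the other gives $T(\pi^{-1})\,T(\pi) = I$, hence $T(\pi)^{-1} = T(\pi^{-1})$ with no transpose.
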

\begin{proof}
  Write $W_\xi = \{w(\xi; \mu) : \mu \in \Bset(\Delta_\xi, r)\}$ and $W_\lambda = \{w(\lambda; \mu) : \mu \in \Bset(\Delta_\lambda, r)\}$. Theorem~\ref{thm:U3} gives $A_{\pi^{-1}}^* W_\xi \subseteq \spn W_\lambda$ and, with the roles of $\xi$ and $\lambda$ exchanged and $\pi$ replaced by $\pi^{-1}$, $A_\pi^* W_\lambda \subseteq \spn W_\xi$. Applying the linear map $A_{\pi^{-1}}^*$ to the second inclusion and using $A_{\pi^{-1}}^* A_\pi^* = (A_\pi A_{\pi^{-1}})^* = \mathrm{id}$,
  \[ W_\lambda = A_{\pi^{-1}}^* A_\pi^* W_\lambda \subseteq \spn A_{\pi^{-1}}^* W_\xi \subseteq \spn W_\lambda, \]
  so $\spn A_{\pi^{-1}}^* W_\xi = \spn W_\lambda = \PrL (\Delta)$, the last equality by Corollary~\ref{cor:basis} on $\Delta_\lambda$.
\end{proof}

\begin{corollary}[matrix form and sparsity]\label{cor:sparsity}
  Define $C_\psi (\pi)$ on the index set of the directional catalogue $\Psi(\Delta_\xi)$ entrywise. For $\mu = (f, k, s)$, let
\[
  C_\psi (\pi)_{\mu \nu} =
  \begin{cases}
    +1 & \text{if } \nu = (\pi(f), \pi(k), \pi(s)) \text{ and } \mu \text{ is relabelled in Theorem~\ref{thm:psi-pullback}},\\
    -1 & \text{if } \nu = (\pi(f), v, \pi(s)),\ v \in \pi(f) \setminus \pi(k), \text{ and } \mu \text{ is a filter hit},\\
    \phantom{+}0 & \text{otherwise},
  \end{cases}
\]
so that \eqref{eq:psi-pullback-single} and \eqref{eq:psi-pullback-hit} read
$A_{\pi^{-1}}^* \psi_\mu = \sum_{\nu} C_\psi (\pi)_{\mu \nu} \, \psi_\nu (\lambda)$. Then
  \begin{equation}
    C_\psi (\pi)_{\mu \nu} \in \{-1, 0, +1\},
  \end{equation}
  each row carrying a single entry $+1$ or at most $D$ entries $-1$. $C_\psi (\pi)$ is
  independent of the degree $r$.
\end{corollary}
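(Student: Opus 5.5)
The plan is to read the corollary off Theorem~\ref{thm:psi-pullback} row by row, treating each anchored index $\mu = (f, k, s)$ of the catalogue $\Psi(\Delta_\xi)$ in turn. Since the catalogue is indexed by arguments rather than by distinct forms, I first need the rows to be well defined. I take $\nu$ to range over the index set of $\Psi(\Delta_\lambda)$, which is in bijection with that of $\Psi(\Delta_\xi)$ because the index sets are defined purely combinatorially. By Theorem~\ref{thm:psi-pullback}, exactly one of the two cases applies to $\mu$, and in each case every target index $\nu$ prescribed by the definition of $C_\psi(\pi)$ is anchored. So each such $\nu$ is a genuine index of $\Psi(\Delta_\lambda)$, and the displayed identity $A_{\pi^{-1}}^* \psi_\mu = \sum_\nu C_\psi(\pi)_{\mu\nu} \psi_\nu(\lambda)$ is exactly \eqref{eq:psi-pullback-single} or \eqref{eq:psi-pullback-hit}.

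Next I check that the entries lie in $\{-1,0,+1\}$, which amounts to showing that the two nonzero clauses never assign conflicting values to the same entry. For a fixed $\mu$ only one clause is active, since the relabelled and filter-hit cases are mutually exclusive. In the relabelled case the single target $(\pi(f),\pi(k),\pi(s))$ receives $+1$. In the filter-hit case the targets $(\pi(f), v, \pi(s))$ for distinct $v \in \pi(f)\setminus\pi(k)$ are distinct indices, because they differ in their second component. Each target therefore receives exactly one $-1$ and no entry is summed twice.

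The remaining claims are counts and a dependence check. In the relabelled case the row has one entry $+1$. In the filter-hit case it has $|\pi(f)| - 1 = \dim f \le D$ entries equal to $-1$. Independence of $r$ holds because the defining conditions, namely $s_k$, whether $\pi(k) = \min\pi(f)$, and the images of $(f,k,s)$, involve neither $\alpha$ nor $r$. Theorem~\ref{thm:psi-pullback} already records this independence.

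The only delicate point is the bookkeeping about duplicate forms in the catalogue. Because some catalogue elements coincide as forms, the expansion is not unique as a combination of forms. It is, however, exactly the one prescribed by the index map, and that is all the definition of $C_\psi(\pi)$ asserts, so the argument must be phrased at the level of indices rather than forms.
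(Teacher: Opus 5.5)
Your proposal is correct and follows the paper's proof, which likewise reads each row off Theorem~\ref{thm:psi-pullback}: either a single $+1$ at $(\pi(f),\pi(k),\pi(s))$, or $\dim f \le D$ entries $-1$ at $(\pi(f),v,\pi(s))$, with nothing depending on $r$. You also check three points the paper leaves implicit: the targets are anchored, the hit-case targets are pairwise distinct, and the matrix must be read at the level of indices because the catalogue repeats forms. These are sound refinements.
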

\begin{proof}
  By Theorem~\ref{thm:psi-pullback}, the row of $(f, k, s)$ carries either a single entry
  \[ +1 \ \text{at } (\pi(f), \pi(k), \pi(s)), \quad \text{or } \dim f \leq D \text{ entries } -1 \ \text{at } (\pi(f), v, \pi(s)), \ v \in \pi(f) \setminus \pi(k). \]
  Nothing in the definition depends on $r$.
\end{proof}

\begin{corollary}[combinatorial inverse of the catalogue matrix]\label{cor:psi-inverse}
  For every $\pi \in S_{D+1}$, $C_\psi (\pi)^{-1} = C_\psi (\pi^{-1})$.
\end{corollary}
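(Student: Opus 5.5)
We will show $C_\psi(\pi^{-1})\,C_\psi(\pi)=I$ on the catalogue index set. A product of square matrices equal to the identity forces each to be the other's inverse. The catalogue is finite and the same size on $\Delta_\xi$ and $\Delta_\lambda$, so $C_\psi(\pi)$ is square.

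A caveat: the catalogue is a family rather than a set. Different indices can carry the same form (every $(f,k,s)$ with $s_k=0$ gives $\dd\xi_k$), so linear independence of the catalogue forms is not available. The identity must therefore be proven at the level of index combinatorics, not via functional uniqueness of expansions. The product rule $A_{\pi^{-1}}^*A_{\pi}^*=\mathrm{id}$ only shows that the product matrix is the identity modulo linear relations among catalogue forms, which is too weak.

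We will compute row $\mu=(f,k,s)$ of the product directly, with $\mu$ anchored on $\Delta_\xi$. Write $\tau=\pi^{-1}$ for the inverse relabelling. The argument splits into two cases.

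\emph{Case 1 (relabelled).} Here $s_k=0$ or $\pi(k)\neq\min\pi(f)$, so the row of $C_\psi(\pi)$ has a single $+1$ at $\nu=(\pi(f),\pi(k),\pi(s))$, and $\nu$ is anchored. Now apply $\tau$ to $\nu$. If $\nu$ is relabelled under $\tau$, it returns $+1$ at $(f,k,s)=\mu$, and the row of the product is $e_\mu$.

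Otherwise $\nu$ is a filter hit under $\tau$, which requires $s_k=1$ and $k=\tau(\pi(k))=\min f$. Then $\mu$ itself violates anchoring by Theorem~\ref{thm:U1}, a contradiction. So this subcase is impossible.

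\emph{Case 2 (filter hit).} Here $s_k=1$ and $\pi(k)=\min\pi(f)$. The row of $C_\psi(\pi)$ has entries $-1$ at $\nu_v=(\pi(f),v,\pi(s))$ for $v\in\pi(f)\setminus\pi(k)$.

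Under $\tau$, each $\nu_v$ maps to $(f,\tau(v),s)$ with $\tau(v)\neq k$. Since $\pi(k)=\min\pi(f)$, the filter-hit condition under $\tau$ is $\tau(v)=\min f$. Anchoring of $\mu$ forces $k\neq\min f$ (Theorem~\ref{thm:U1}), so exactly one $v$ has $\tau(v)=\min f$; call it $v_0$, with $\tau(v_0)=m:=\min f$. Every other $v$ is relabelled and contributes $-1$ at $(f,\tau(v),s)$.

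The entry $v_0$ contributes $(-1)(-1)=+1$ at each $(f,u,s)$ for $u\in f\setminus m$. The contributions then add up as follows:
\begin{itemize}
\item at $u=k$: only $v_0$ contributes, giving $+1$;
\item at $u\in f\setminus\{m,k\}$: $v_0$ gives $+1$ and the relabelled $v=\pi(u)$ gives $-1$, so the entry cancels to $0$;
\item at $u=m$: no term arises, since $v_0$ is the only $v$ with $\tau(v)=m$ and it is a filter hit rather than a relabelled term.
\end{itemize}
The row of the product is therefore exactly $e_\mu$.

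The main obstacle is this second-order bookkeeping in Case 2. It hinges on locating the unique $v_0$ via anchoring and verifying the $\pm1$ cancellation at every $u$. Once both row computations are in place, $C_\psi(\pi^{-1})C_\psi(\pi)=I$, and since the matrices are square, $C_\psi(\pi)^{-1}=C_\psi(\pi^{-1})$.
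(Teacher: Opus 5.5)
Your proof is correct and follows the paper's argument essentially step for step. It uses the same row-by-row index computation: the relabelled case is closed by the anchoring contradiction of Theorem~\ref{thm:U1}, and the hit case by isolating the unique re-hit index $\pi(\min f)$ and cancelling the $\pm 1$ contributions. Two cosmetic points. First, the row you actually compute is $e_\mu C_\psi(\pi) C_\psi(\pi^{-1})$, which is the product in the opposite order to the one you announce; this is harmless since the matrices are square. Second, in Case~2 the reduction of the hit condition to $\tau(v) = \min f$ rests on $\pi(s)_v = 1$. That follows from $s_k = 1$ together with the covering condition, which give $\supp(s) = f$; it does not follow from $\pi(k) = \min \pi(f)$.
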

\begin{proof}
  Let $e_\nu$ denote the unit row vector at the index $\nu$, so that row $\nu$ of a matrix $C$ is $e_\nu C$. For a relabelled row, $e_{(f, k, s)} C_\psi (\pi) = e_{(\pi(f), \pi(k), \pi(s))}$, and that index is relabelled again under $\pi^{-1}$, because a hit there would require $\pi(s)_{\pi(k)} = s_k = 1$ and $\pi^{-1}(\pi(k)) = k = \min f$, which the anchoring of $(f, k, s)$ excludes by Theorem~\ref{thm:U1}, so $e_{(\pi(f), \pi(k), \pi(s))} C_\psi (\pi^{-1}) = e_{(f, k, s)}$. Let $\mu = (f, k, s)$ be a hit row, so that $s_k = 1$, $k \neq \min f$ and $\pi(k) = \min \pi(f)$. Then
  \[ e_\mu C_\psi (\pi) = - \sum_{v \in \pi(f) \setminus \pi(k)} e_{(\pi(f), v, \pi(s))}. \]
  Under $\pi^{-1}$, the index with $v = \pi(\min f)$ hits again, since $\pi^{-1}(v) = \min f$ and $\pi(s)_v = s_{\min f} = 1$, the latter because $s_k = 1$ means $k \in \supp(s)$, so the covering condition reads $\supp(s) \cup \{k\} = \supp(s) = f$ and thus $\min f \in \supp(s)$. Its row is therefore
  \[ e_{(\pi(f), \pi(\min f), \pi(s))} C_\psi (\pi^{-1}) = - \sum_{u \in f \setminus \min f} e_{(f, u, s)}, \]
  while the other indices are relabelled, $e_{(\pi(f), v, \pi(s))} C_\psi (\pi^{-1}) = e_{(f, \pi^{-1}(v), s)}$ for $v \in \pi(f) \setminus \{\pi(k), \pi(\min f)\}$, and these $\pi^{-1}(v)$ run over $f \setminus \{k, \min f\}$. Multiplying the first display by $C_\psi (\pi^{-1})$ on the right,
  \[ e_\mu C_\psi (\pi) \, C_\psi (\pi^{-1}) = - \sum_{u \in f \setminus \{k, \min f\}} e_{(f, u, s)} + \sum_{u \in f \setminus \min f} e_{(f, u, s)} = e_{(f, k, s)} = e_\mu. \]
  Hence $C_\psi (\pi) \, C_\psi (\pi^{-1}) = I$.
\end{proof}

\begin{corollary}[transformation matrix]\label{cor:cob}
  For $\pi \in S_{D+1}$, define $T(\pi)$ on the basis by
  \begin{equation}
    A_{\pi^{-1}}^* (w_\mu (\xi)) = \sum_{\nu \in \Bset(\Delta_\lambda, r)} T(\pi)_{\mu \nu} \, w_\nu (\lambda),
    \quad \mu \in \Bset(\Delta_\xi, r).
  \end{equation}
  Then $T(\pi)$ is well defined, its entries lie in $\{-1, 0, +1\}$, each row
  carries a single entry $+1$ or at most $D$ entries $-1$, and
  $T(\pi)^{-1} = T(\pi^{-1})$. Unlike the catalogue matrix $C_\psi (\pi)$, the matrix
  $T(\pi)$ depends on $r$, but only through its index set $\Bset(\Delta_\xi, r)$.
\end{corollary}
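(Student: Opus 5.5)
The plan is to transfer everything from the catalogue level to the basis level through the factorisation \eqref{eq:factorized}. Well-definedness comes first. Theorem~\ref{thm:U3} shows that each $A_{\pi^{-1}}^* w_\mu(\xi)$ is a combination of elements of $\Bset(\Delta_\lambda, r)$. Corollary~\ref{cor:basis} on $\Delta_\lambda$ makes this expansion unique. Hence the row of $T(\pi)$ is forced, and it is read directly from the two cases of Theorem~\ref{thm:U3}:
\begin{itemize}
\item in the relabelled case, a single $+1$ at $(\pi(f), \pi(k), \pi(\alpha))$;
\item in the filter-hit case, $-1$ at $(\pi(f), v, \pi(\alpha))$ for $v \in \pi(f) \setminus \pi(k)$.
\end{itemize}
These filter-hit indices are distinct triplets, so no cancellation or merging occurs. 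There are $\dim f \leq D$ of them. This gives the entry range and the sparsity claim.

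For the dependence on $r$, I would note that the rule defining the row of $\mu = (f,k,\alpha)$ is the following. Take the row of $C_\psi(\pi)$ at $(f,k,\ind{\alpha})$ and attach the exponent $\pi(\alpha)$ to every column. The rule involves $\alpha$ only as a label carried along, and $r$ enters only through which $\alpha$ are admissible, that is, through the index set $\Bset(\Delta_\xi, r)$. Formally, $T(\pi)_{(f,k,\alpha),(g,v,\beta)} = \iv{\beta = \pi(\alpha)}\, C_\psi(\pi)_{(f,k,\ind{\alpha}),(g,v,\ind{\beta})}$. This uses the fact that $(g,v,\ind{\beta})$ determines the catalogue index when $\beta = \pi(\alpha)$.

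For the inverse, I see two routes. The most direct one uses Theorem~\ref{thm:U4}. Applying the definition at $\pi^{-1}$ with the roles of $\xi$ and $\lambda$ exchanged gives $A_\pi^* w_\nu(\lambda) = \sum_\rho T(\pi^{-1})_{\nu\rho} w_\rho(\xi)$. Composing with $A_{\pi^{-1}}^*$ and using $A_{\pi^{-1}}^* A_\pi^* = \mathrm{id}$ yields
\[ A_{\pi^{-1}}^* w_\mu(\xi) = \sum_{\nu,\rho} T(\pi)_{\mu\nu} T(\pi^{-1})_{\nu\rho}\, A_{\pi^{-1}}^* w_\rho(\xi). \]
The family $A_{\pi^{-1}}^* W_\xi$ consists of images of a basis under an invertible linear map, so it is linearly independent. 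It follows that $T(\pi)T(\pi^{-1}) = I$, and square size then gives the two-sided inverse. The alternative route mirrors the proof of Corollary~\ref{cor:psi-inverse} row by row, using the block structure above: the map $\alpha \mapsto \pi(\alpha)$ is inverted by $\pi^{-1}$ and commutes with the catalogue computation.

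I expect the main obstacle to be bookkeeping rather than substance. One must make sure that the index set used for $\pi^{-1}$, namely $\Bset(\Delta_\lambda, r)$ mapped to $\Bset(\Delta_\xi, r)$, is matched correctly. One must also confirm that the basis-uniqueness argument really applies to $A_{\pi^{-1}}^* W_\xi$, since it is a family on $\Delta_\lambda$ expanded in another basis. Invertibility of pullback by an affine bijection settles this.
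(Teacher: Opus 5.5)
Your proposal is correct and follows the paper's proof. Well-definedness comes from Corollary~\ref{cor:basis} on $\Delta_\lambda$, the rows are read off the two cases of Theorem~\ref{thm:U3}, and $T(\pi)\,T(\pi^{-1}) = I$ follows by composing Theorem~\ref{thm:U3} at $\pi$ and $\pi^{-1}$ as in Theorem~\ref{thm:U4}. The paper leaves two steps implicit that you make explicit, and both are sound: the formula $T(\pi)_{(f,k,\alpha),(g,v,\beta)} = \iv{\beta = \pi(\alpha)}\, C_\psi(\pi)_{(f,k,\ind{\alpha}),(g,v,\ind{\beta})}$ for the $r$-dependence, and the linear-independence argument that extracts the matrix identity.
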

\begin{proof}
  $\Bset(\Delta_\lambda, r)$ is a basis by Corollary~\ref{cor:basis}, so the expansion exists and is
  unique, and $T(\pi)$ is well defined. Theorem~\ref{thm:U3} exhibits the expansion of every
  row, with a single entry $+1$ at $(\pi(f), \pi(k), \pi(\alpha))$ or $\dim f \leq D$ entries
  $-1$ at the triplets $(\pi(f), v, \pi(\alpha))$, $v \in \pi(f) \setminus \pi(k)$. Composing Theorem~\ref{thm:U3}
  at $\pi$ and at $\pi^{-1}$, as in Theorem~\ref{thm:U4}, gives $T(\pi) \, T(\pi^{-1}) = I$.
\end{proof}

\begin{remark}[practical computation]
  In practice the change of basis can be applied through the factorisation
  \eqref{eq:factorized}. The catalogue $\Psi(\Delta_\xi)$, the matrices $C_\psi (\pi)$, and the
  applicable case of Theorem~\ref{thm:psi-pullback} for each catalogue form can be precomputable once for a given $K$ and $D$. Per basis
  function, the case test reads the stored exponent $\alpha_k$ and compares
  $\pi(k)$ with $\min \pi(f)$, at small and constant cost. The scalar factor
  undergoes the index permutation $\alpha \mapsto \pi(\alpha)$. Only the directional factors
  require change-of-basis. Since the matrix $T(\pi)$ of
  Corollary~\ref{cor:cob} is known in closed form, it is not necessary to allocate it to apply its action.
\end{remark}

\begin{remark}[contrast with numerically computed transformations]
  The numerically computed transformation matrices mentioned in the introduction act on
  the assembled shape functions, treating the relocation of the
  multi-exponent and the pullback of the 1-form factors together. Their blocks thus
  grow with the degree, and the structure exhibited here is invisible to them.
  The factorisation \eqref{eq:factorized} separates the two. The family of barycentric monomials is mapped to itself by every relabelling, each monomial to a single monomial (Lemma~\ref{lem:bernstein}); the catalogue is not, and the whole change of basis is carried by the $r$-independent matrix $C_\psi (\pi)$ with entries in $\{-1, 0, +1\}$.
\end{remark}

\section{The rotating basis for \texorpdfstring{$\PrmL$}{Pr- Lambda1}}\label{sec:trimmed}

This section develops the counterpart of \sect{sec:full}. The structure is similar, since we use a spanning set of products of a barycentric monomial and a Whitney 1-form, a
combinatorial filter, and closed-form pullback formulas, with the following differences. Firstly, the Whitney form \eqref{eq:whitney} is
position-dependent and the scalar factor has degree $r - 1$. Secondly, we choose to index Whitney form by \emph{ordered pairs} of vertices. Since a relabelled Whitney form is another one modulo sign change, the catalogue is invariant modulo sign flip. Linear combinations arise only for
the basis functions, through shifts of the multi-exponents. Lastly, although this multi-exponent change is different for each basis function and often requires combining several pulled-back functions, it is nevertheless closed-form with exactly
two terms, uniformly in $r$, $D$ and $\pi$.
Three new notations are used throughout. We write $(\cdot)^{\sort}$ for the
\emph{sorting operator}, which lists a tuple or set of labels in increasing order.
For an ordered pair $e = (e_1, e_2)$
of distinct labels we write
\begin{equation}
  \varepsilon(e) \doteq 2 \iv{e_1 < e_2} - 1,
\end{equation}
so that $\varphi(\xi; e) = \varepsilon(e) \, \varphi(\xi; e^{\sort})$ by alternation. For labels $i \neq j$, define
the \emph{shift operator} $\rho$ that acts on multi-exponents by
\begin{equation}
  \rho(i, j) \circ \alpha \doteq \alpha - 1_i + 1_j.
\end{equation}

\subsection{Whitney forms and the spanning set}\label{sec:trimmed-spanning}

\begin{definition}[spanning set]\label{def:trimmed-spanning}
  $\Sset^-(\Delta_\xi, r)$ is the set of triplets $(f, e, \alpha)$ with $\dim f \geq 1$, an ordered
  pair $e = (e_1, e_2)$ of labels of $f$ with $e_1 \neq e_2$, $|\alpha| = r - 1$ and
  $\supp(\alpha) \cup \{e_1, e_2\} = f$. To each is associated the 1-form
  \begin{equation}\label{eq:trimmed-w}
    w(\xi; f, e, \alpha) = \BB(\xi; \alpha) \, \varphi(\xi; e_1, e_2),
  \end{equation}
  with $\varphi$ the Whitney 1-form \eqref{eq:whitney}, that is $\varphi(\xi; e_1, e_2) = \xi_{e_1} \dd \xi_{e_2} - \xi_{e_2} \dd \xi_{e_1}$.
\end{definition}

We identify triplets with 1-forms as in \sect{sec:full}.

\begin{remark}[ordered pairs]
  We deliberately do not impose $e_1 < e_2$ in the spanning set, in contrast to
  the convention of \cite{ArnoldFalkWinther2009}. Keeping the pair orders and the
  alternation $\varphi(\xi; e_2, e_1) = -\varphi(\xi; e_1, e_2)$ enables simplifying the upcoming relabelling theorem by using the $\varepsilon$ symbol to handle sign changes.
  Note also that $\dim f \geq 1$ is guaranteed by $\{e_1, e_2\} \subseteq f$. So as in the full
  space, vertices own no basis functions.
\end{remark}

\begin{theorem}[spanning]\label{thm:trimmed-spanning}
  $\spn \Sset^-(\Delta_\xi, r) = \PrmL (\Delta)$ for every $r \geq 1$.
\end{theorem}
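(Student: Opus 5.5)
The argument is set equality by double inclusion, taking as given the characterisation \eqref{eq:trimmed-space} of $\PrmL$ as the span of $\xi^\alpha \varphi(\xi; i, j)$ over $|\alpha| = r - 1$ and $i \neq j$.

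\emph{Inclusion $\spn \Sset^-(\Delta_\xi, r) \subseteq \PrmL(\Delta)$.} Every triplet $(f, e, \alpha) \in \Sset^-(\Delta_\xi, r)$ gives the form $\BB(\xi;\alpha)\,\varphi(\xi; e_1, e_2)$. Here $|\alpha| = r-1$ and $e_1 \neq e_2$, so this form is one of the generators listed in \eqref{eq:trimmed-space}, and the inclusion follows.

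\emph{Inclusion $\PrmL(\Delta) \subseteq \spn \Sset^-(\Delta_\xi, r)$.} Take any generator $\xi^\alpha \varphi(\xi; i, j)$ with $|\alpha| = r-1$ and $i \neq j$. Set
\[ f = \supp(\alpha) \cup \{i, j\}, \qquad e = (i, j). \]
We check the conditions of Definition~\ref{def:trimmed-spanning} in turn.
\begin{enumerate}
\item $|f| \geq 2$, because $f$ contains the two distinct labels $i$ and $j$; hence $\dim f \geq 1$.
\item $e$ is an ordered pair of distinct labels of $f$, by construction.
\item $|\alpha| = r - 1$, by assumption.
\item $\supp(\alpha) \cup \{e_1, e_2\} = f$ holds by the very definition of $f$.
\end{enumerate}
So $(f, e, \alpha) \in \Sset^-(\Delta_\xi, r)$, and its associated form $w(\xi; f, e, \alpha)$ is exactly $\xi^\alpha \varphi(\xi; i, j)$. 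Every generator therefore lies in $\spn \Sset^-$, which gives the reverse inclusion. Unlike the full-space case, no case distinction is needed, because pairs are kept ordered and $f$ absorbs the whole support.

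\emph{Main obstacle.} The only delicate point is how much weight \eqref{eq:trimmed-space} can bear. If it is accepted as a characterisation of $\PrmL$, as cited from \cite{ArnoldFalkWinther2006,RapettiBossavit2009,ChristiansenRapetti2016}, the proof reduces to the index bookkeeping above.

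If instead one starts from the definition $\PrmL = \mathcal{P}_{r-1}\Lambda^1 + \kappa \mathcal{P}_{r-1}\Lambda^2$, one would add a standard step. Use the Koszul operator $\kappa$ centred at vertex $v_0$, together with $\sum_i \xi_i = 1$, to write $\kappa(\dd\xi_i \wedge \dd\xi_j) = \varphi(\xi; i, j)$ for $i, j \neq 0$. Also write
\[ \xi^\beta \dd\xi_j = \sum_i \xi^\beta\xi_i\,\dd\xi_j = \sum_{i \neq j} \xi^\beta \varphi(\xi; i, j) + \xi^\beta \xi_j \sum_i \dd\xi_i, \]
where the last term vanishes by \eqref{eq:ambient-relation}. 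This recovers \eqref{eq:trimmed-space}.
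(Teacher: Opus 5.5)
Your proof is correct and is essentially the paper's own argument: both inclusions read \eqref{eq:trimmed-space} generator by generator, and the converse takes $f = \supp(\alpha) \cup \{i, j\}$ so that $w(\xi; f, (i,j), \alpha) = \xi^\alpha \varphi(\xi; i, j)$. The Koszul digression is unnecessary, since the paper simply cites \eqref{eq:trimmed-space}. As written it also only shows the inclusion of $\PrmL$ in the span of the Whitney generators. The reverse inclusion for generators involving label $0$ would additionally need an identity such as $\varphi(\xi; 0, j) = \dd\xi_j - \sum_{k \neq 0, j} \varphi(\xi; k, j)$.
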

\begin{proof}
  Every element of
  $\Sset^-(\Delta_\xi, r)$ is of the form $\xi^\alpha \varphi(\xi; i, j)$ with $|\alpha| = r - 1$, hence lies
  in $\PrmL$ by \eqref{eq:trimmed-space}. Conversely if $f, (i, j), \alpha$ is an admissible argument in Definition~\ref{def:trimmed-spanning}, then the generator $\xi^\alpha \varphi(\xi; i, j)$
  of \eqref{eq:trimmed-space} equals $w(\xi; f, (i, j), \alpha)$ since $f = \supp(\alpha) \cup \{i, j\}$.
\end{proof}

\subsection{Extraction of the basis}\label{sec:trimmed-basis}

\begin{definition}[basis filter]\label{def:trimmed-filter}
  A pair $(f, e)$ is \emph{anchored} if $e_1 < e_2$ and $e_1 = \min f$.
  $\Bset^-(\Delta_\xi, r) \subseteq \Sset^-(\Delta_\xi, r)$ consists of the triplets $(f, e, \alpha)$ whose
  pair is anchored.
\end{definition}

\begin{theorem}[filter characterisation]\label{thm:T1}
  A triplet $(f, e, \alpha) \in \Sset^-(\Delta_\xi, r)$ has anchored pair
  if and only if $e_1 < e_2$ and
  \begin{equation}\label{eq:trimmed-filter}
    (\ind{\alpha})_i = 0 \quad \text{for all} \quad i < e_1.
  \end{equation}
\end{theorem}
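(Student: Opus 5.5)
The claim is that, for $(f,e,\alpha) \in \Sset^-(\Delta_\xi, r)$, the anchoring condition ``$e_1 < e_2$ and $e_1 = \min f$'' of Definition~\ref{def:trimmed-filter} is equivalent to ``$e_1 < e_2$ and \eqref{eq:trimmed-filter}''. The proof is a direct comparison of the two conditions, in the same spirit as Theorem~\ref{thm:U1}. The covering condition $\supp(\alpha) \cup \{e_1, e_2\} = f$ of Definition~\ref{def:trimmed-spanning} does all the work. Since $e_1 < e_2$ appears on both sides, we assume it throughout and only show that $e_1 = \min f$ holds if and only if $(\ind{\alpha})_i = 0$ for all $i < e_1$.

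For the forward direction, assume $e_1 = \min f$. Every $i \in \supp(\alpha)$ lies in $f$ by the covering condition, so $i \geq \min f = e_1$. Hence no label $i < e_1$ belongs to $\supp(\alpha)$, that is, $(\ind{\alpha})_i = 0$ for all $i < e_1$, which is \eqref{eq:trimmed-filter}.

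For the converse, assume \eqref{eq:trimmed-filter} and $e_1 < e_2$. By the covering condition, $f = \supp(\alpha) \cup \{e_1, e_2\}$, so
\[ \min f = \min\bigl(\min \supp(\alpha), e_1, e_2\bigr), \]
where $\min \supp(\alpha)$ is omitted when $\alpha = 0$ (possible for $r = 1$). Condition \eqref{eq:trimmed-filter} gives that every element of $\supp(\alpha)$ is at least $e_1$, and $e_2 > e_1$ by assumption. Therefore $\min f = e_1$, and the pair $(f, e)$ is anchored.

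No step is a genuine obstacle. The only points requiring care are the empty-support case $\alpha = 0$, where $f = \{e_1, e_2\}$ and both conditions reduce to $e_1 < e_2$, and recording that \eqref{eq:trimmed-filter} refers to $\ind{\alpha}$, so that $\supp(\ind{\alpha}) = \supp(\alpha)$ may be used as in the proof of Theorem~\ref{thm:U1}.
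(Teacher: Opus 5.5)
Your proposal is correct and follows essentially the same route as the paper: the forward direction uses $\supp(\alpha) \subseteq f$ together with $e_1 = \min f$, and the converse reads off $\min f = e_1$ from the covering condition $f = \supp(\alpha) \cup \{e_1, e_2\}$. Your explicit treatment of the empty-support case $\alpha = 0$ is a small addition that the paper's proof leaves implicit.
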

\begin{proof}
  If the pair is anchored, then $e_1 = \min f < e_2$, and every
  $i < e_1$ lies outside $f$, hence outside $\supp(\alpha) \subseteq f$, so both
  conditions hold. Conversely, assume $e_1 < e_2$ and \eqref{eq:trimmed-filter}.
  Every element of $\supp(\alpha)$ is then at least $e_1$, and $e_2 > e_1$, so
  $\min f = \min(\supp(\alpha) \cup \{e_1, e_2\}) = e_1$ and the pair is anchored.
\end{proof}

\begin{remark}[contrast with Theorem~\ref{thm:U1}]
  Unlike for the full space, the filter-hit condition of the trimmed space is independent of $\alpha$. This
  $\alpha$-independence propagates to the pullback (Theorem~\ref{thm:T3}), so that all basis
  functions sharing a pair $(f, e)$ transform alike.
\end{remark}

\begin{lemma}[triangle identity]\label{lem:pou}
  For any three distinct labels $a, b, c$,
  \begin{equation}\label{eq:pou}
    \xi_a \, \varphi(\xi; b, c) = \xi_b \, \varphi(\xi; a, c) - \xi_c \, \varphi(\xi; a, b).
  \end{equation}
\end{lemma}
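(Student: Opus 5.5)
The plan is to verify \eqref{eq:pou} by direct expansion in the barycentric differentials. The identity is a purely algebraic statement about the coefficients of $\dd \xi_a$, $\dd \xi_b$ and $\dd \xi_c$, so neither the ambient relation \eqref{eq:ambient-relation} nor any earlier result beyond the definition \eqref{eq:whitney} is needed.

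First, expand the right-hand side using \eqref{eq:whitney}:
\[ \xi_b \, \varphi(\xi; a, c) - \xi_c \, \varphi(\xi; a, b)
   = \xi_b \xi_a \, \dd \xi_c - \xi_b \xi_c \, \dd \xi_a - \xi_c \xi_a \, \dd \xi_b + \xi_c \xi_b \, \dd \xi_a . \]
Second, collect terms. The two $\dd \xi_a$ terms, $-\xi_b \xi_c \, \dd \xi_a$ and $+\xi_c \xi_b \, \dd \xi_a$, cancel. What remains is $\xi_a (\xi_b \, \dd \xi_c - \xi_c \, \dd \xi_b)$, which is exactly $\xi_a \, \varphi(\xi; b, c)$.

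The distinctness of $a, b, c$ is needed only so that every Whitney form in the statement has distinct arguments, as \eqref{eq:whitney} requires. The computation itself does not use it.

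No step is a real obstacle. The only care needed is sign bookkeeping in the cancellation of the $\dd \xi_a$ terms. An equivalent viewpoint, which I would mention as a remark, is that the identity is the Koszul-type relation $\kappa$-exactness $\xi_a \varphi_{bc} - \xi_b \varphi_{ac} + \xi_c \varphi_{ab} = 0$, i.e.\ the vanishing of the alternating sum $\sum \pm \xi_\cdot \varphi_{\cdot\cdot}$. This is the form recorded in Proposition 3.5 of \cite{RapettiBossavit2009}, and it is what will later drive the two-term exponent shifts $\rho(i,j)$.
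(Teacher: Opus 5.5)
Your proposal is correct and takes the same approach as the paper: expand both sides with \eqref{eq:whitney}, cancel the $\dd \xi_a$ terms, and each side reduces to $\xi_a \xi_b \, \dd \xi_c - \xi_a \xi_c \, \dd \xi_b$. Your closing remark is just a rearrangement of \eqref{eq:pou}; the paper likewise identifies it with the cyclic relation of \cite[Prop.~3.5]{RapettiBossavit2009}.
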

\begin{proof}
  Expanding against \eqref{eq:whitney}, both sides equal
  $\xi_a \xi_b \dd \xi_c - \xi_a \xi_c \dd \xi_b$ after pairwise
  cancellation. The identity is equivalent to the cyclic relation
  $\xi_a \varphi(b, c) + \xi_b \varphi(c, a) + \xi_c \varphi(a, b) = 0$ from
  \cite[Prop.~3.5]{RapettiBossavit2009}.
\end{proof}

\begin{theorem}[two-term decomposition]\label{thm:T2}
  Let $(f, e, \alpha) \in \Sset^-(\Delta_\xi, r) \setminus \Bset^-(\Delta_\xi, r)$ with $e_1 < e_2$, and set
  $m \doteq \min f$. Then $m < e_1$, $\alpha_m > 0$ and
  \begin{equation}\label{eq:trimmed-decomp}
    w(\xi; f, (e_1, e_2), \alpha)
    = w(\xi; f, (m, e_2), \rho(m, e_1) \circ \alpha) - w(\xi; f, (m, e_1), \rho(m, e_2) \circ \alpha),
  \end{equation}
  and both triplets on the right-hand side belong to $\Bset^-(\Delta_\xi, r)$.
\end{theorem}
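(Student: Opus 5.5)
The proof splits into three steps: locate $m$, apply the triangle identity, and check that the two new triplets are admissible and anchored.

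\textbf{Step 1: locating $m$.} Since the triplet is not anchored but $e_1 < e_2$, Theorem~\ref{thm:T1} gives a label $i < e_1$ with $\alpha_i > 0$. Hence $m = \min f \leq i < e_1$. Because $m < e_1 < e_2$, the label $m$ is neither $e_1$ nor $e_2$. As $m \in f = \supp(\alpha) \cup \{e_1, e_2\}$, it follows that $m \in \supp(\alpha)$, that is, $\alpha_m > 0$.

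\textbf{Step 2: the identity.} Since $\alpha_m > 0$, the exponent $\beta \doteq \alpha - 1_m$ is in $\N_0^{D+1}$, and $\xi^\alpha = \xi^\beta \xi_m$. We apply Lemma~\ref{lem:pou} with the distinct labels $a = m$, $b = e_1$, $c = e_2$:
\[ \xi_m \varphi(\xi; e_1, e_2) = \xi_{e_1} \varphi(\xi; m, e_2) - \xi_{e_2} \varphi(\xi; m, e_1). \]
Multiplying by $\xi^\beta$ gives
\[ \xi^\alpha \varphi(\xi; e_1, e_2) = \xi^{\beta + 1_{e_1}} \varphi(\xi; m, e_2) - \xi^{\beta + 1_{e_2}} \varphi(\xi; m, e_1). \]
Here $\beta + 1_{e_1} = \rho(m, e_1) \circ \alpha$ and $\beta + 1_{e_2} = \rho(m, e_2) \circ \alpha$. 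This is \eqref{eq:trimmed-decomp}, provided the two right-hand objects are legitimate triplets carrying the face $f$.

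\textbf{Step 3: admissibility and anchoring.} This is the only delicate point, and it is a bookkeeping check on the covering condition. Consider $\alpha' = \rho(m, e_1) \circ \alpha$. Its modulus is still $r - 1$, and $\supp(\alpha') \subseteq \supp(\alpha) \cup \{e_1\} \subseteq f$. For the covering condition we use $\supp(\alpha) \setminus \{m\} \subseteq \supp(\alpha')$ together with $e_1 \in \supp(\alpha')$. Then
\[ \supp(\alpha') \cup \{m, e_2\} \supseteq (\supp(\alpha) \setminus \{m\}) \cup \{m, e_1, e_2\} = f, \]
so equality holds. The pair $(m, e_2)$ satisfies $m < e_2$ and $m = \min f$, hence it is anchored. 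The triplet $(f, (m, e_1), \rho(m, e_2) \circ \alpha)$ is handled symmetrically, with $m < e_1$. By Definition~\ref{def:trimmed-filter}, both triplets therefore lie in $\Bset^-(\Delta_\xi, r)$.

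I expect no real obstacle. The only care needed is to check that the face stays $f$: it must neither shrink, which is guaranteed because $m$ is explicitly covered by the pair, nor grow, which holds because the added unit sits at $e_1$ or $e_2$, both in $f$.
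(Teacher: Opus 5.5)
Your proof is correct and follows essentially the same route as the paper: instantiate the triangle identity of Lemma~\ref{lem:pou} at $(m, e_1, e_2)$, multiply by $\xi^{\alpha - 1_m}$, then check the covering condition and anchoring. The minor differences are that you obtain $m < e_1$ through Theorem~\ref{thm:T1}, where the paper reads it off directly from $e_1 \neq \min f$, and that you spell out the covering check for the shifted exponents, which the paper only states in one line.
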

\begin{proof}
  Since the pair $(f, e)$ is not anchored, $m = \min f \notin \{e_1, e_2\}$, so the covering condition gives
  $\alpha_m > 0$. To obtain \eqref{eq:trimmed-decomp}, instantiate \eqref{eq:pou} at $(a, b, c) = (m, e_1, e_2)$, multiply
  by $\BB(\xi; \alpha) / \xi_m$, and use
  \begin{equation}
    \frac{\BB(\xi; \alpha)}{\xi_m} \cdot \xi_{e_1} = \BB(\xi; \rho(m, e_1) \circ \alpha), \quad
    \frac{\BB(\xi; \alpha)}{\xi_m} \cdot \xi_{e_2} = \BB(\xi; \rho(m, e_2) \circ \alpha),
  \end{equation}
  which are polynomial identities because $\alpha_m \geq 1$. The right-hand side triplets belong to $\Sset^-$ since $\rho$ preserves the multi-index moduli, and the index exchanges preserve the cover $f = \supp(\alpha) \cup e$. The new pairs are anchored since $m = \min f$.
\end{proof}

\begin{lemma}[cardinality]\label{lem:trimmed-count}
  A face $f$ with $d = \dim f$ owns $d \binom{r}{d}$ different triplets of $\Bset^-(\Delta_\xi, r)$, and the cardinality of the whole set is
  \begin{equation}\label{eq:trimmed-count}
    \# \Bset^-(\Delta_\xi, r) = r \binom{r + D}{D - 1} = \dim \PrmL.
  \end{equation}
\end{lemma}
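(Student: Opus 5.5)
Fix a face $f$ with $|f| = d + 1$ and write $m = \min f$. By Definition~\ref{def:trimmed-filter}, a triplet of $\Bset^-(\Delta_\xi, r)$ owned by $f$ is $(f, (m, e_2), \alpha)$ with $e_2 \in f \setminus m$ and $|\alpha| = r - 1$. The covering condition is $\supp(\alpha) \cup \{m, e_2\} = f$, which is equivalent to $\supp(\alpha) \supseteq f \setminus \{m, e_2\}$ and $\supp(\alpha) \subseteq f$. The plan is to count, for each of the $d$ choices of $e_2$, the exponents $\alpha$ supported in $f$ with modulus $r - 1$ that are positive on the $d - 1$ labels of $f \setminus \{m, e_2\}$ and arbitrary (possibly zero) on $m$ and $e_2$.

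Subtracting $1$ from each of the $d - 1$ forced entries leaves a vector of $d + 1$ nonnegative entries with sum $r - 1 - (d - 1) = r - d$. By stars and bars, the number of such vectors is $\binom{r - d + d}{d} = \binom{r}{d}$, which vanishes correctly when $d > r$. Multiplying by the $d$ choices of $e_2$ gives $d \binom{r}{d}$ per face. Distinct triplets are distinct index data, and the statement counts triplets, so no coincidence of forms needs to be ruled out at this stage.

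For the total, sum over the faces: $\# \Bset^- = \sum_{d=1}^{D} \binom{D+1}{d+1} \, d \binom{r}{d}$. There are two ways to prove this equals $r \binom{r+D}{D-1}$.
\begin{enumerate}
\item Appeal to the geometric decomposition of \cite{ArnoldFalkWinther2009}, as in Lemma~\ref{lem:count}. The bubble space of $\mathcal{P}_r^-\Lambda^1$ on a $d$-simplex has dimension $\dim \mathcal{P}_{r-d+1}\Lambda^{d-1}(\R^d)$. One checks this equals $d\binom{r}{d}$: the formula $\dim \mathcal{P}_s \Lambda^k(\R^n) = \binom{s+n}{n}\binom{n}{k}$ gives $\binom{r+1}{d}\, d$ only up to a correction, so the bubble dimension should instead be taken from \cite[Thm.~4.16 / eq.~(3.8)]{ArnoldFalkWinther2009}. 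Summing these dimensions over faces then gives $\dim \PrmL$ directly.
\item Prove the identity directly. Use $d\binom{r}{d} = r\binom{r-1}{d-1}$, so the sum becomes $r \sum_{d} \binom{D+1}{D-d}\binom{r-1}{d-1}$. By Vandermonde's identity, $\sum_{d} \binom{D+1}{D-d}\binom{r-1}{d-1} = \binom{D+r}{D-1}$, since the lower indices sum to $D - 1$. This yields $r\binom{r+D}{D-1}$, which equals $\dim \PrmL$ by \eqref{eq:trimmed-space}.
\end{enumerate}
The direct Vandermonde computation is cleaner and self-contained, so I would use it.

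The main obstacle is purely bookkeeping. One must check that the covering condition really does leave $m$ and $e_2$ free, including $\alpha_m = 0$. One must also check the edge cases $d = 1$ (no forced entries, count $\binom{r}{1} = r$) and $d > r$ (count zero), and that the Vandermonde index ranges match the range $1 \le d \le D$.
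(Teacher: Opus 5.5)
Your chosen route (option 2) is correct and is essentially the paper's proof. Like the paper, you subtract the forced unit entries on $f \setminus \{\min f, e_2\}$ to get $\binom{r}{d}$ exponents per pair, then sum over faces using the absorption identity $d\binom{r}{d} = r\binom{r-1}{d-1}$ and Vandermonde. In your discarded option 1 the degree index is off by one: the bubble space of $\mathcal{P}_r^-\Lambda^1$ on a $d$-simplex is isomorphic to $\mathcal{P}_{r-d}\Lambda^{d-1}(\R^d)$, whose dimension is exactly $d\binom{r}{d}$, so no ``correction'' is needed there.
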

\begin{proof}
  Fix $f$ with $|f| = d + 1$. By Definition~\ref{def:trimmed-filter} the triplets owned by $f$ have
  $e_1 = \min f$ and $e_2 \in f \setminus \min f$, giving $d$ choices of pair. For each, the
  multi-exponents completing an admissible triplet have $|\alpha| = r - 1$, must be positive on the $d - 1$ labels
  of $f \setminus \{e_1, e_2\}$, and may be positive on the two labels $e_1, e_2$. The map
  \[ \alpha \mapsto \beta \doteq \alpha - \sum_{i \in f \setminus \{e_1, e_2\}} 1_i \]
  is a bijection from these exponents onto the multi-exponents $\beta$ with
  $\supp(\beta) \subseteq f$ and $|\beta| = (r - 1) - (d - 1) = r - d$, and there are
  $\binom{(r - d) + d}{d} = \binom{r}{d}$ such $\beta$. This is the number of ways to
  distribute $r - d$ units over the $d + 1$ labels of $f$.
    The absorption identity $d \binom{r}{d} = r \binom{r - 1}{d - 1}$ rewrites this count as $r \binom{r - 1}{d - 1}$. Summing over the $\binom{D + 1}{d + 1}$ faces of dimension $d$ and using Vandermonde's identity $\sum_{d = 1}^{D} \binom{D + 1}{d + 1} \binom{r - 1}{d - 1} = \binom{r + D}{D - 1}$,
  \[ \# \Bset^-(\Delta_\xi, r) = r \sum_{d = 1}^{D} \binom{D + 1}{d + 1} \binom{r - 1}{d - 1} = r \binom{r + D}{D - 1}, \]
  which is $\dim \PrmL$ by \eqref{eq:trimmed-space}.
  
\end{proof}

The lemma is also a consequence of \cite[Theorem 6.1 and Corollary 5.2]{ArnoldFalkWinther2009}, where $\Bset^-(\Delta_\xi, r)$ appears as the basis of $\PrmL$ obtained from the geometric decomposition; the direct count above is shorter and yields the face-wise cardinalities explicitly.

\begin{corollary}[basis]\label{cor:trimmed-basis}
  $\Bset^-(\Delta_\xi, r)$ is a basis of $\PrmL (\Delta)$.
\end{corollary}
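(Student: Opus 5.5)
The plan mirrors the proof of Corollary~\ref{cor:basis}: first show that $\Bset^-(\Delta_\xi, r)$ spans $\PrmL(\Delta)$, then compare its cardinality with the dimension.

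For spanning, Theorem~\ref{thm:trimmed-spanning} already gives $\spn \Sset^-(\Delta_\xi, r) = \PrmL(\Delta)$. So it suffices to show that every triplet $(f, e, \alpha) \in \Sset^-(\Delta_\xi, r)$ lies in $\spn \Bset^-(\Delta_\xi, r)$. I split into two cases.

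\emph{Case $e_1 > e_2$.} Alternation of the Whitney form gives
\[
w(\xi; f, e, \alpha) = -\,w(\xi; f, (e_2, e_1), \alpha).
\]
The reversed triplet $(f, (e_2, e_1), \alpha)$ is still admissible, because the covering condition $\supp(\alpha) \cup \{e_1, e_2\} = f$ does not depend on the order of the pair. This reduces everything to the case $e_1 < e_2$.

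\emph{Case $e_1 < e_2$.} Either the pair is already anchored, so the triplet lies in $\Bset^-$, or it is not. In the latter case Theorem~\ref{thm:T2} writes $w(\xi; f, e, \alpha)$ as a difference of two elements of $\Bset^-(\Delta_\xi, r)$.

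This is where the trimmed space is simpler than the full one. There is no iteration: a single application of the two-term identity \eqref{eq:trimmed-decomp} lands directly in the basis. The reason is that both new pairs $(m, e_2)$ and $(m, e_1)$ start at $m = \min f$ with $m < e_1 < e_2$, and the face $f$ is unchanged by the shifts $\rho$. Hence $\spn \Bset^- = \spn \Sset^- = \PrmL(\Delta)$.

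For linear independence, Lemma~\ref{lem:trimmed-count} gives $\#\Bset^-(\Delta_\xi, r) = r\binom{r+D}{D-1} = \dim \PrmL$. A spanning family whose cardinality equals the dimension is a basis. In particular, distinct triplets give distinct forms.

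The only point that needs care is making sure the counting in Lemma~\ref{lem:trimmed-count} counts index triplets, not a priori distinct forms; the argument above handles this correctly. If two distinct triplets gave the same form, the span would have dimension strictly less than $\#\Bset^-$, contradicting spanning of a space of dimension $\#\Bset^-$. So the dimension argument settles distinctness automatically, and no separate injectivity check is needed.
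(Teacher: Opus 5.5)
Your proof is correct and follows the paper's argument: alternation reduces unsorted pairs to sorted ones, Theorem~\ref{thm:T2} places every non-anchored sorted triplet in $\spn \Bset^-(\Delta_\xi, r)$ in a single step, and Lemma~\ref{lem:trimmed-count} concludes by cardinality. Your closing observation, that distinct triplets must give distinct forms by the dimension count, is also the paper's final remark.
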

\begin{proof}
  Alternation maps the $e_1 > e_2$ spanning elements to their sorted negatives,
  and Theorem~\ref{thm:T2} expresses every sorted element of $\Sset^- \setminus \Bset^-$ in
  $\spn \Bset^-$. Hence $\spn \Bset^- = \spn \Sset^- = \PrmL$ by
  Theorem~\ref{thm:trimmed-spanning}, and Lemma~\ref{lem:trimmed-count} concludes by
  cardinality. In particular, distinct triplets of $\Bset^-(\Delta_\xi, r)$ give distinct forms.
\end{proof}

\begin{lemma}[face ownership]\label{lem:trimmed-trace}
  Let $(f, e, \alpha) \in \Sset^-(\Delta_\xi, r)$ and let $g$ be a face of $\Delta_\xi$ that does not
  contain $f$. Then $\trs_g w(\xi; f, e, \alpha) = 0$.
\end{lemma}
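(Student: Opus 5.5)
The plan mirrors the proof of Lemma~\ref{lem:trace}, with the Whitney form taking the place of the directional factor. Since $f = \supp(\alpha) \cup \{e_1, e_2\}$ is not contained in $g$, I will pick a label $i \in f \setminus g$. The barycentric coordinate $\xi_i$ vanishes identically on $g$, and because the trace is the pullback by the inclusion $g \hookrightarrow \Delta_\xi$, it also kills $\dd \xi_i$: $\trs_g \xi_i = 0$ and $\trs_g \dd \xi_i = \dd(\trs_g \xi_i) = 0$. The trace is multiplicative on products of a function and a 1-form, so
\[
\trs_g w(\xi; f, e, \alpha) = \trs_g \BB(\xi; \alpha) \, \trs_g \varphi(\xi; e_1, e_2),
\]
and it is enough to show that one of the two factors has zero trace.

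I will split into two cases according to where $i$ comes from in the covering condition.
\begin{enumerate}
\item If $i \in \supp(\alpha)$, then $\alpha_i \geq 1$, so $\xi_i$ divides the monomial $\BB(\xi; \alpha)$. Its trace on $g$ therefore vanishes, and so does the trace of $w$.
\item Otherwise $i \in \{e_1, e_2\}$. Because $\varphi$ is alternating, I may assume $i = e_1$ at the cost of a sign. Then
\[
\trs_g \varphi(\xi; e_1, e_2) = \trs_g \xi_{e_1} \, \trs_g \dd \xi_{e_2} - \trs_g \xi_{e_2} \, \trs_g \dd \xi_{e_1},
\]
and both terms vanish: the first because $\trs_g \xi_{e_1} = 0$, the second because $\trs_g \dd \xi_{e_1} = 0$.
\end{enumerate}

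I do not expect any real obstacle. The only point needing care is to justify that the trace of $\dd \xi_i$ vanishes, which follows because pullback commutes with the exterior derivative. The argument holds for every element of $\Sset^-$, not only those of $\Bset^-$, which is what the statement requires.
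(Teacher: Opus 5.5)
Your proof is correct and follows the paper's argument. You pick a label of $f$ outside $g$: if it lies in $\supp(\alpha)$ the monomial factor has zero trace, and otherwise it is $e_1$ or $e_2$, so both terms of the Whitney form vanish because $\xi_{e_i}$ and $\dd \xi_{e_i}$ have zero trace on $g$ (your justification $\trs_g \dd \xi_i = \dd(\trs_g \xi_i) = 0$ spells out a step the paper leaves implicit).
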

\begin{proof}
  As in Lemma~\ref{lem:trace}, if some $i \in \supp(\alpha)$ lies outside $g$ the Bernstein factor
  vanishes identically on $g$. Otherwise $\supp(\alpha) \subseteq g$, and since
  $f = \supp(\alpha) \cup \{e_1, e_2\}$ is not contained in $g$, at least one of the two
  labels $e_1, e_2$ lies outside $g$. For that label $\xi_{e_i}$ vanishes on $g$ and
  $\trs_g \dd \xi_{e_i} = 0$, so both terms of \eqref{eq:whitney} have vanishing
  trace.
\end{proof}

Restricted to $\Bset^-(\Delta_\xi, r)$, Lemma~\ref{lem:trimmed-trace} also follows from \cite[Section 9]{ArnoldFalkWinther2009}. The extension operators of Remark~\ref{rem:extension} are defined
verbatim here, with \eqref{eq:whitney} in place of \eqref{eq:psi} and Lemma~\ref{lem:trimmed-trace} in
place of Lemma~\ref{lem:trace}.

\subsection{Pullback under vertex relabellings}\label{sec:trimmed-pullback}

\begin{theorem}[rotation of the pair catalogue]\label{thm:phi-equivariance}
  For distinct labels $e_1 \neq e_2$ and $\pi \in S_{D+1}$, writing
  $\pi(e) = (\pi(e_1), \pi(e_2))$,
  \begin{equation}
    A_{\pi^{-1}}^* (\varphi(\xi; e_1, e_2)) = \varphi(\lambda; \pi(e_1), \pi(e_2))
    = \varepsilon(\pi(e)) \, \varphi(\lambda; \pi(e)^{\sort}).
  \end{equation}
\end{theorem}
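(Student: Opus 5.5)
The statement has two equalities, and I will prove them in turn. Both are direct consequences of how $A_{\pi^{-1}}^*$ acts on the generators, as recorded in \eqref{eq:relabel}, together with the alternation of the Whitney forms \eqref{eq:whitney}.

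\textbf{First equality.} Expand $\varphi(\xi; e_1, e_2) = \xi_{e_1} \dd \xi_{e_2} - \xi_{e_2} \dd \xi_{e_1}$. Pullbacks commute with products, with the exterior derivative and with linear combinations, and by \eqref{eq:relabel} they act on the generators as
\begin{equation}
  \xi_i \mapsto \lambda_{\pi(i)}, \qquad \dd \xi_i \mapsto \dd \lambda_{\pi(i)}.
\end{equation}
Applying this substitution term by term gives
\begin{equation}
  A_{\pi^{-1}}^* \varphi(\xi; e_1, e_2) = \lambda_{\pi(e_1)} \dd \lambda_{\pi(e_2)} - \lambda_{\pi(e_2)} \dd \lambda_{\pi(e_1)} = \varphi(\lambda; \pi(e_1), \pi(e_2)).
\end{equation}
This is the same argument as in Lemma~\ref{lem:psi-equivariance}, here without any weight to track. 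The right-hand side is a valid Whitney form because $\pi$ is injective, so $\pi(e_1) \neq \pi(e_2)$.

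\textbf{Second equality.} Here I use the sign convention $\varphi(\lambda; e) = \varepsilon(e)\, \varphi(\lambda; e^{\sort})$, stated at the start of \sect{sec:trimmed}, applied to the ordered pair $\pi(e)$ in the $\lambda$ coordinates. There are two cases.
\begin{itemize}
  \item If $\pi(e_1) < \pi(e_2)$, then $\pi(e)^{\sort} = \pi(e)$ and $\varepsilon(\pi(e)) = 1$, so the identity is trivial.
  \item If $\pi(e_1) > \pi(e_2)$, then $\pi(e)^{\sort} = (\pi(e_2), \pi(e_1))$ and $\varepsilon(\pi(e)) = -1$. Alternation, $\varphi(\lambda; a, b) = -\varphi(\lambda; b, a)$, gives the identity.
\end{itemize}
Distinctness of $\pi(e_1)$ and $\pi(e_2)$ again excludes the equality case.

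There is no real obstacle here. The only point needing care is the direction convention: $A_{\pi^{-1}}^*$ substitutes $\xi_i \mapsto \lambda_{\pi(i)}$, not $\lambda_{\pi^{-1}(i)}$. I will quote \eqref{eq:relabel} explicitly so the indices match Lemma~\ref{lem:bernstein}.
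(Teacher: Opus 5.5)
Your proposal is correct and matches the paper's proof: both apply the substitution $\xi_i \mapsto \lambda_{\pi(i)}$, $\dd \xi_i \mapsto \dd \lambda_{\pi(i)}$ from \eqref{eq:relabel} to get the first equality, then use the alternation of $\varphi$ for the second. Your case split on the sign of $\varepsilon(\pi(e))$ just spells out that alternation step in more detail than the paper does.
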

\begin{proof}
  The pullback $A_{\pi^{-1}}^*$ maps $\xi_{e_i} \mapsto \lambda_{\pi(e_i)}$ and
  $\dd \xi_{e_i} \mapsto \dd \lambda_{\pi(e_i)}$, and the second equality is the alternation of
  $\varphi$.
\end{proof}

The Whitney forms thus form a finite catalogue, independent of the degree $r$,
on which every relabelling acts as a signed permutation. Unlike in
\sect{sec:full-pullback}, no combinations arise at the catalogue level.

\begin{theorem}[pullback of the basis functions]\label{thm:T3}
  Let $(f, e, \alpha) \in \Bset^-(\Delta_\xi, r)$, so that $e_1 = \min f < e_2$, and let
  $\pi \in S_{D+1}$. Write $\varepsilon = \varepsilon(\pi(e))$, $(e_1^\pi, e_2^\pi) = \pi(e)^{\sort}$ and
  $m^\pi = \min \pi(f)$. Then
  \begin{equation}\label{eq:trimmed-pullback-raw}
    A_{\pi^{-1}}^* (w(\xi; f, e, \alpha)) = \varepsilon \, \BB(\lambda; \pi(\alpha)) \, \varphi(\lambda; \pi(e)^{\sort}),
  \end{equation}
  and exactly one of the following holds.
  \begin{enumerate}
  \item \emph{Relabelled}. If $e_1^\pi = m^\pi$, equivalently if
    $\pi^{-1}(m^\pi) \in \{e_1, e_2\}$, the pair $(\pi(f), \pi(e)^{\sort})$ is anchored and
    \[ A_{\pi^{-1}}^* w(\xi; f, e, \alpha) = \varepsilon \, w(\lambda; \pi(f), \pi(e)^{\sort}, \pi(\alpha)), \]
    with $(\pi(f), \pi(e)^{\sort}, \pi(\alpha)) \in \Bset^-(\Delta_\lambda, r)$.
  \item \emph{Filter hit.} Otherwise, and independently of $\alpha$,
    \begin{equation}\label{eq:trimmed-pullback-hit}
      \begin{aligned}
        A_{\pi^{-1}}^* (w(\xi; f, e, \alpha)) = \varepsilon \bigl( &\, w(\lambda; \pi(f), (m^\pi, e_2^\pi), \rho(m^\pi, e_1^\pi) \circ \pi(\alpha))\\
        - &\, w(\lambda; \pi(f), (m^\pi, e_1^\pi), \rho(m^\pi, e_2^\pi) \circ \pi(\alpha)) \bigr),
      \end{aligned}
    \end{equation}
    with both triplets on the right in $\Bset^-(\Delta_\lambda, r)$.
  \end{enumerate}
\end{theorem}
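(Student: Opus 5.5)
The plan mirrors the proof of Theorem~\ref{thm:U3}: factor the pullback, transform each factor by the catalogue result, then repair the single non-anchored case with the identity that built the basis.

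\textbf{Step 1: the raw formula.} The pullback $A_{\pi^{-1}}^*$ is multiplicative. By Lemma~\ref{lem:bernstein} the scalar factor $\BB(\xi;\alpha)$ maps to $\BB(\lambda;\pi(\alpha))$. By Theorem~\ref{thm:phi-equivariance} the Whitney factor maps to $\varepsilon\,\varphi(\lambda;\pi(e)^{\sort})$. Together these give \eqref{eq:trimmed-pullback-raw}.

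\textbf{Step 2: admissibility.} Next I check that $(\pi(f),\pi(e)^{\sort},\pi(\alpha))\in\Sset^-(\Delta_\lambda,r)$. This holds because $|\pi(\alpha)|=r-1$ and
\[
\supp(\pi(\alpha))\cup\{\pi(e_1),\pi(e_2)\}=\pi\bigl(\supp(\alpha)\cup\{e_1,e_2\}\bigr)=\pi(f).
\]
Sorting does not change the underlying set of the pair. Consequently the raw formula reads
\[
A_{\pi^{-1}}^* w = \varepsilon\, w(\lambda;\pi(f),\pi(e)^{\sort},\pi(\alpha)).
\]

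\textbf{Step 3: the case split.} It is governed by Definition~\ref{def:trimmed-filter} on $\Delta_\lambda$. The sorted pair already satisfies $e_1^\pi<e_2^\pi$, so it is anchored exactly when $e_1^\pi=\min\pi(f)=m^\pi$.

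For the stated equivalence, first suppose $m^\pi\in\{\pi(e_1),\pi(e_2)\}$. Since $m^\pi$ is the minimum of $\pi(f)$, it is also the smaller element of the sorted pair, so $e_1^\pi=m^\pi$. Conversely, $e_1^\pi=m^\pi$ trivially places $m^\pi$ in $\pi(e)$. Finally, $m^\pi\in\pi(e)$ is the same as $\pi^{-1}(m^\pi)\in\{e_1,e_2\}$.

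In this relabelled case the triplet lies in $\Bset^-(\Delta_\lambda,r)$ and case~1 is proved. The two cases are clearly exclusive.

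\textbf{Step 4: the filter-hit case.} Otherwise the triplet $(\pi(f),(e_1^\pi,e_2^\pi),\pi(\alpha))$ lies in $\Sset^-\setminus\Bset^-$ on $\Delta_\lambda$, with sorted pair. Applying Theorem~\ref{thm:T2} on $\Delta_\lambda$ with $m=m^\pi$ gives the two-term expansion. Multiplying by $\varepsilon$ yields \eqref{eq:trimmed-pullback-hit}, and Theorem~\ref{thm:T2} already guarantees that both right-hand triplets lie in $\Bset^-(\Delta_\lambda,r)$. Note that $\alpha_{\pi^{-1}(m^\pi)}>0$ by the covering condition, which is exactly what Theorem~\ref{thm:T2} supplies for $\pi(\alpha)$ at $m^\pi$.

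The test in Step~3 involves only $f$, $e$ and $\pi$, never $\alpha$, which gives the stated independence of $\alpha$.

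\textbf{Main obstacle.} The only delicate point is the equivalence in Step~3 between $e_1^\pi=m^\pi$ and $\pi^{-1}(m^\pi)\in\{e_1,e_2\}$. It rests on the fact that a minimum of $\pi(f)$ lying in the pair must be its smaller element. The rest is direct invocation of earlier results.
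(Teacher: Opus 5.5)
Your proposal is correct and follows the paper's proof step for step. It derives the raw formula from multiplicativity, Lemma~\ref{lem:bernstein} and Theorem~\ref{thm:phi-equivariance}, checks that $(\pi(f), \pi(e)^{\sort}, \pi(\alpha)) \in \Sset^-(\Delta_\lambda, r)$, applies the anchoring test of Definition~\ref{def:trimmed-filter} on $\Delta_\lambda$, and invokes Theorem~\ref{thm:T2} on $\Delta_\lambda$ in the hit case; your one addition is an explicit check that $e_1^\pi = m^\pi$ is equivalent to $\pi^{-1}(m^\pi) \in \{e_1, e_2\}$, which the paper states without proof.
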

\begin{proof}
  The pullback operation is multiplicative, relabels the scalar factors by
  Lemma~\ref{lem:bernstein}, and rotates the Whitney forms by Theorem~\ref{thm:phi-equivariance},
  which gives \eqref{eq:trimmed-pullback-raw}. $|\pi(\alpha)| = r - 1$ and
  $\pi(\supp(\alpha) \cup \{e_1, e_2\}) = \supp(\pi(\alpha)) \cup \{\pi(e_1), \pi(e_2)\}$ ensure that
  $(\pi(f), \pi(e)^{\sort}, \pi(\alpha))$ belongs to the spanning set. By Definition~\ref{def:trimmed-filter} on $\Delta_\lambda$, this
  triplet is in $\Bset^-(\Delta_\lambda, r)$ iff its pair is anchored, that is, iff
  $e_1^\pi = m^\pi$. Otherwise, the remaining statements result from applying Theorem~\ref{thm:T2} on $\Delta_\lambda$ to
  $(\pi(f), \pi(e)^{\sort}, \pi(\alpha))$, since the face minimum is $m^\pi$.
\end{proof}

\begin{remark}[$r = 1$, no hits]\label{rem:no-hits}
  For $r = 1$ the exponent vanishes, so $f = \{e_1, e_2\}$ and
  $\min \pi(f) = \min(\pi(e_1), \pi(e_2))$. No filter hit can occur, and Theorem~\ref{thm:T3}
  reduces to $A_{\pi^{-1}}^* \varphi(\xi; e) = \varepsilon(\pi(e)) \, \varphi(\lambda; \pi(e)^{\sort})$, the classical
  alternating sign rule for Whitney edge elements \cite{LohiKettunen2021}. In the
  language of \cite{BerchenkoKogan2024}, the lowest-order Whitney basis is invariant up
  to sign \cite[Lemma 6]{Licht2023symmetry}.
\end{remark}

\begin{theorem}[same span and combinatorial inverse]\label{thm:T4}
  The families
  \[ \{A_{\pi^{-1}}^* w(\xi; f, e, \alpha) : (f, e, \alpha) \in \Bset^-(\Delta_\xi, r)\} \quad \text{and} \quad
     \{w(\lambda; f, e, \alpha) : (f, e, \alpha) \in \Bset^-(\Delta_\lambda, r)\} \]
  both span $\PrmL (\Delta)$. The reverse change of basis is obtained
  by applying Theorem~\ref{thm:T3} at $\pi^{-1}$.
\end{theorem}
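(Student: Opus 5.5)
The plan mirrors the proof of Theorem~\ref{thm:U4}, with Theorem~\ref{thm:T3} in place of Theorem~\ref{thm:U3}. Write $W_\xi = \{w(\xi; \mu) : \mu \in \Bset^-(\Delta_\xi, r)\}$ and $W_\lambda = \{w(\lambda; \mu) : \mu \in \Bset^-(\Delta_\lambda, r)\}$.

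\textbf{First inclusion.} Theorem~\ref{thm:T3} at $\pi$ expresses every $A_{\pi^{-1}}^* w(\xi; \mu)$, $\mu \in \Bset^-(\Delta_\xi, r)$, as a signed combination of one or two triplets of $\Bset^-(\Delta_\lambda, r)$. This holds in both the relabelled and the filter-hit cases, so
\[ A_{\pi^{-1}}^* W_\xi \subseteq \spn W_\lambda . \]

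\textbf{Reverse inclusion.} Theorem~\ref{thm:T3} is stated for a generic pair of coordinate systems. We apply it with the roles of $\xi$ and $\lambda$ exchanged and $\pi$ replaced by $\pi^{-1}$. This is legitimate because \eqref{eq:relabel} is symmetric: $A_\pi^*$ is the substitution $\lambda_i \mapsto \xi_{\pi^{-1}(i)}$, which is exactly the pullback in Theorem~\ref{thm:T3} written for $\pi^{-1}$ with the coordinate names swapped. Lemma~\ref{lem:bernstein} and Theorem~\ref{thm:phi-equivariance} hold in both directions, so their use inside Theorem~\ref{thm:T3} transfers verbatim. This step is the only point needing care, and the second identity of Lemma~\ref{lem:bernstein} already records the symmetry, so I expect it to be immediate. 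It gives
\[ A_\pi^* W_\lambda \subseteq \spn W_\xi . \]

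\textbf{Combining the two.} Apply the linear map $A_{\pi^{-1}}^*$ to the reverse inclusion and use $A_{\pi^{-1}}^* A_\pi^* = (A_\pi A_{\pi^{-1}})^* = \mathrm{id}$:
\[ W_\lambda \subseteq \spn A_{\pi^{-1}}^* W_\xi \subseteq \spn W_\lambda . \]
So the two spans coincide. Corollary~\ref{cor:trimmed-basis} on $\Delta_\lambda$ identifies the common span as $\PrmL(\Delta)$, using that the trimmed space is ordering independent.

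\textbf{Reverse change of basis.} Because $A_{\pi^{-1}}^*$ is injective and $W_\xi$ is a basis, the pulled-back family is also a basis. The expansion of each $w(\lambda; \nu)$ in it is obtained by expanding $A_\pi^* w(\lambda; \nu)$ in $W_\xi$ via Theorem~\ref{thm:T3} at $\pi^{-1}$ and then applying $A_{\pi^{-1}}^*$. Hence the reverse transformation is given by Theorem~\ref{thm:T3} at $\pi^{-1}$, and the matrices satisfy $T^-(\pi)\,T^-(\pi^{-1}) = I$ by uniqueness of expansions in a basis. No case analysis of filter hits is needed here, unlike in Corollary~\ref{cor:psi-inverse}.
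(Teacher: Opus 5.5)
Your proposal is correct and takes the paper's approach: the paper also proves Theorem~\ref{thm:T4} by transposing the argument of Theorem~\ref{thm:U4}, getting $A_{\pi^{-1}}^* W_\xi \subseteq \spn W_\lambda$ and $A_\pi^* W_\lambda \subseteq \spn W_\xi$ from Theorem~\ref{thm:T3} at $\pi$ and $\pi^{-1}$, sandwiching via $A_{\pi^{-1}}^* A_\pi^* = \mathrm{id}$, and identifying the span through Corollary~\ref{cor:trimmed-basis}. Your final paragraph, which derives $T(\pi)\,T(\pi^{-1}) = I$ from uniqueness of expansions, is what the paper records separately in Corollary~\ref{cor:trimmed-cob}.
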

\begin{proof}
  As for Theorem~\ref{thm:U4}, with Theorem~\ref{thm:T3} and Corollary~\ref{cor:trimmed-basis} in place of Theorem~\ref{thm:U3} and Corollary~\ref{cor:basis}.
\end{proof}

\begin{corollary}[matrix form and sparsity]\label{cor:trimmed-sparsity}
  Define $C_\varphi (\pi)$ on the catalogue of Whitney forms with sorted pairs by
  $A_{\pi^{-1}}^* \varphi_\mu = \sum_{\nu} C_\varphi (\pi)_{\mu \nu} \, \varphi_\nu (\lambda)$. Then $C_\varphi (\pi)$ is a
  signed permutation matrix with $C_\varphi (\pi)^{-1} = C_\varphi (\pi^{-1})$, independent of
  the degree $r$.
\end{corollary}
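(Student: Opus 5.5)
The plan is to read off the matrix directly from Theorem~\ref{thm:phi-equivariance}. The index set of the catalogue is the set of sorted pairs $\mu = (e_1, e_2)$ with $e_1 < e_2$. For each such $\mu$, Theorem~\ref{thm:phi-equivariance} gives $A_{\pi^{-1}}^* \varphi(\xi; \mu) = \varepsilon(\pi(\mu)) \, \varphi(\lambda; \pi(\mu)^{\sort})$. We therefore set $C_\varphi(\pi)_{\mu \nu} = \varepsilon(\pi(\mu)) \, \iv{\nu = \pi(\mu)^{\sort}}$.

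Well-definedness requires uniqueness of the expansion. The $\binom{D+1}{2}$ Whitney forms with sorted pairs are the case $r = 1$ of Corollary~\ref{cor:trimmed-basis}, since for $r=1$ the set $\Bset^-(\Delta_\lambda, 1)$ consists exactly of the pairs $(f, e)$ with $f = \{e_1, e_2\}$ and $e$ sorted. They are thus a basis of $\mathcal{P}_1^-\Lambda^1$, and the coefficients are uniquely determined. None of this involves $r$, so $C_\varphi(\pi)$ is independent of the degree.

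Next we show that $C_\varphi(\pi)$ is a signed permutation matrix. The map $\mu \mapsto \pi(\mu)^{\sort}$ on sorted pairs is a bijection: it is the action of $\pi$ on 2-element subsets $\{e_1, e_2\} \mapsto \{\pi(e_1), \pi(e_2)\}$, and its inverse is induced by $\pi^{-1}$. Each row therefore has exactly one nonzero entry $\pm 1$, and each column has exactly one as well.

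For the inverse, we compute the row $\mu$ of the product $C_\varphi(\pi) C_\varphi(\pi^{-1})$. Write $\nu = \pi(\mu)^{\sort}$. Then $\pi^{-1}(\nu)^{\sort} = \mu$, so the product row is $\varepsilon(\pi(\mu)) \, \varepsilon(\pi^{-1}(\nu)) \, e_\mu$. It remains to check that this sign product equals $+1$. This is the only step needing care, though it is elementary.

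If $\pi(\mu)$ is already sorted, then $\nu = \pi(\mu)$ and $\pi^{-1}(\nu) = \mu$ is sorted, so both signs are $+1$. If $\pi(\mu)$ is not sorted, then $\nu = (\pi(e_2), \pi(e_1))$ and $\pi^{-1}(\nu) = (e_2, e_1)$, which is unsorted, so both signs are $-1$. In either case the product is $+1$.

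Alternatively, the inverse follows functorially, as in Theorem~\ref{thm:U4}: from $A_\pi^* A_{\pi^{-1}}^* = \mathrm{id}$ and uniqueness of coefficients in the basis, we obtain $C_\varphi(\pi) C_\varphi(\pi^{-1}) = I$ directly.
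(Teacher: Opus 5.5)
Your proof is correct and follows the paper. The paper's proof simply states that the corollary restates Theorem~\ref{thm:phi-equivariance} (the signed permutation structure) together with Theorems~\ref{thm:T3} and~\ref{thm:T4} (the inverse, obtained by composing the expansions at $\pi$ and $\pi^{-1}$, which is your ``alternative'' argument). Your explicit sign check for $C_\varphi(\pi) C_\varphi(\pi^{-1}) = I$ and your use of the $r = 1$ case of Corollary~\ref{cor:trimmed-basis} for uniqueness of the coefficients make explicit what the paper leaves implicit.
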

\begin{proof}
  The statements restate Theorem~\ref{thm:phi-equivariance}, Theorem~\ref{thm:T3} and Theorem~\ref{thm:T4}.
\end{proof}

\begin{corollary}[transformation matrix]\label{cor:trimmed-cob}
  For $\pi \in S_{D+1}$, define $T(\pi)$ on the basis by
  \begin{equation}
    A_{\pi^{-1}}^* (w_\mu (\xi)) = \sum_{\nu \in \Bset^-(\Delta_\lambda, r)} T(\pi)_{\mu \nu} \, w_\nu (\lambda),
    \quad \mu \in \Bset^-(\Delta_\xi, r).
  \end{equation}
  Then $T(\pi)$ is well defined, each row
  contains a single non-zero entry $\varepsilon(\pi(e)) \in \{-1, +1\}$ or exactly two non-zero entries,
  $\varepsilon(\pi(e))$ and $-\varepsilon(\pi(e))$, and $T(\pi)^{-1} = T(\pi^{-1})$. As in \sect{sec:full}, the
  matrix depends on $r$ only through its index set $\Bset^-(\Delta_\xi, r)$.
\end{corollary}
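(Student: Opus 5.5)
The proof follows the template of Corollary~\ref{cor:cob}, with Theorem~\ref{thm:T3} in place of Theorem~\ref{thm:U3}.

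\textbf{Well-definedness.} By Corollary~\ref{cor:trimmed-basis} applied on $\Delta_\lambda$, the family $\Bset^-(\Delta_\lambda, r)$ is a basis of $\PrmL(\Delta)$. Each pulled-back function $A_{\pi^{-1}}^* w_\mu(\xi)$ lies in this space by Theorem~\ref{thm:T4}. Its expansion therefore exists and is unique, so $T(\pi)$ is well defined.

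\textbf{Row structure.} Uniqueness lets us read each row directly from Theorem~\ref{thm:T3}.
\begin{enumerate}
\item In the relabelled case, the row has the single entry $\varepsilon(\pi(e)) \in \{-1, +1\}$ at $(\pi(f), \pi(e)^{\sort}, \pi(\alpha))$.
\item In the filter-hit case, formula \eqref{eq:trimmed-pullback-hit} gives the entry $\varepsilon$ at $\nu_1 = (\pi(f), (m^\pi, e_2^\pi), \rho(m^\pi, e_1^\pi)\circ\pi(\alpha))$ and the entry $-\varepsilon$ at $\nu_2 = (\pi(f), (m^\pi, e_1^\pi), \rho(m^\pi, e_2^\pi)\circ\pi(\alpha))$.
\end{enumerate}
To claim \emph{exactly} two non-zero entries in the hit case, I must check that $\nu_1 \neq \nu_2$, so the two terms do not merge or cancel. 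This holds because $e_1^\pi \neq e_2^\pi$, so the pairs $(m^\pi, e_2^\pi)$ and $(m^\pi, e_1^\pi)$ differ. Corollary~\ref{cor:trimmed-basis} states that distinct triplets give distinct basis forms, so the coefficients in the expansion are exactly $\varepsilon$ and $-\varepsilon$.

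\textbf{Inverse.} I would argue as in Theorem~\ref{thm:U4}. Apply Theorem~\ref{thm:T3} at $\pi^{-1}$ with the roles of $\xi$ and $\lambda$ exchanged; this gives $A_\pi^* w_\nu(\lambda) = \sum_\kappa T(\pi^{-1})_{\nu\kappa} w_\kappa(\xi)$. Substituting into the defining expansion yields
\[
w_\mu(\xi) = A_\pi^* A_{\pi^{-1}}^* w_\mu(\xi) = \sum_\kappa \bigl(T(\pi)T(\pi^{-1})\bigr)_{\mu\kappa} w_\kappa(\xi),
\]
using $A_\pi^* A_{\pi^{-1}}^* = (A_{\pi^{-1}}A_\pi)^* = \mathrm{id}$. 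Linear independence of $\Bset^-(\Delta_\xi, r)$ then forces $T(\pi)T(\pi^{-1}) = I$. Since the matrices are square and finite, this gives $T(\pi)^{-1} = T(\pi^{-1})$. Unlike Corollary~\ref{cor:psi-inverse}, no explicit row-by-row combinatorial verification is needed, because uniqueness of the expansions does the work.

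\textbf{$r$-dependence.} The case test in Theorem~\ref{thm:T3} depends only on $(f, e)$ and $\pi$. The target indices are obtained from $\alpha$ by the $r$-uniform operations $\alpha \mapsto \pi(\alpha)$ and $\rho$. Hence $r$ enters $T(\pi)$ only through the index set $\Bset^-(\Delta_\xi, r)$.

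The only delicate step is the distinctness check $\nu_1 \neq \nu_2$ in the hit case. It is settled by the fact that the two target pairs differ.
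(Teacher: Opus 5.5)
Your proof is correct and follows the paper's own argument. It uses uniqueness of the expansion in the basis $\Bset^-(\Delta_\lambda, r)$, reads the rows off Theorem~\ref{thm:T3}, and composes Theorem~\ref{thm:T3} at $\pi$ and $\pi^{-1}$ to get $T(\pi)\,T(\pi^{-1}) = I$. Your explicit check that the two hit-case triplets are distinct (since $e_1^\pi \neq e_2^\pi$) is a detail the paper leaves implicit, and it is what justifies the word ``exactly''.
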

\begin{proof}
  $\Bset^-(\Delta_\lambda, r)$ is a basis by Corollary~\ref{cor:trimmed-basis}, so the expansion is
  unique and $T(\pi)$ is well defined. Theorem~\ref{thm:T3} exhibits the expansion of every
  row, a single entry $\varepsilon(\pi(e))$ at $(\pi(f), \pi(e)^{\sort}, \pi(\alpha))$ or the two entries of
  \eqref{eq:trimmed-pullback-hit}. Composing Theorem~\ref{thm:T3} at $\pi$ and at $\pi^{-1}$ gives $T(\pi) \, T(\pi^{-1}) = I$.
\end{proof}

\begin{remark}[the standard basis]\label{rem:standard-basis}
  As noted in \sect{sec:trimmed-basis}, $\Bset^-(\Delta_\xi, r)$ is the geometrically decomposed basis of $\PrmL$ from \cite[Section 9]{ArnoldFalkWinther2009}. The signed two-term transformation law of Theorem~\ref{thm:T3} is therefore a property of
  the standard basis of the literature.
  To the best of our knowledge it has gone unremarked; \cite{ArnoldFalkWinther2009} constructs the bases and their decomposition but does not consider the pullback of basis functions under vertex relabellings.
\end{remark}

\begin{remark}[practical computation]
  In practice the change of basis can be applied through
  \eqref{eq:trimmed-pullback-raw} and \eqref{eq:trimmed-pullback-hit}. The sign $\varepsilon$, the sorted pair, and the anchoring
  test $\pi^{-1}(\min \pi(f)) \in \{e_1, e_2\}$ are pair-level data, independent of $r$
  and of $\alpha$. The scalar factor undergoes the index permutation
  $\alpha \mapsto \pi(\alpha)$ and, only in the hit case, the two shifts $\rho$.
\end{remark}

\section{Degrees of freedom}\label{sec:dofs}

In Ciarlet's definition, a finite element is a triple $(K, V, \Sigma)$, with $V$ a space of functions on the cell $K$ and $\Sigma$ a basis of its dual $V'$, the \emph{degree-of-freedom} (DOF) basis. The shape functions are the basis of $V$ dual to $\Sigma$; when they are not known in closed form, they are computed by a numerical change of basis from a prebasis of $V$. The construction of this paper is the dual one. We prescribe the pair $(K, \Bset)$, with $\Bset$ the rotating basis and $V = \spn \Bset$, and define the DOF basis $\Sigma = \{\sigma_p\}$ \emph{implicitly}, as the basis of $V'$ dual to $\Bset$, so that $\Bset$ is the shape-function basis by definition. When the functionals are needed, they are computed by a change of basis from a prebasis of $V'$, see \eqref{eq:dof-vandermonde}. Two properties make this the natural choice. The geometrically decomposed bases of \cite{ArnoldFalkWinther2009}, and the rotating bases of this paper, are directly usable as shape functions, and the closed form of the relabelling matrices of \sects{sec:full}{sec:trimmed} is specific to them.
Indexing either basis by its triplets, that is $p, q \in \Bset(\Delta_\xi, r)$
for $\PrL$ and $p, q \in \Bset^-(\Delta_\xi, r)$ for $\PrmL$, suppose there exists a family $\{\sigma_p\}$ in $(\PrL)'$ or $(\PrmL)'$, respectively, such that
\begin{equation}\label{eq:dof-duality}
  \sigma_p (w(\xi; q)) = \delta_{pq}, \quad \text{so that} \quad
  u = \sum_p \sigma_p (u) \, w(\xi; p),
\end{equation}
for all $u$ in the corresponding polynomial space.

\begin{lemma}[dual basis]
  The family $\{\sigma_p\}$ is a basis of the dual space $(\PrL)'$, respectively
  $(\PrmL)'$; equivalently, the degree-of-freedom set is unisolvent.
\end{lemma}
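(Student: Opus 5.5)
The plan is to reduce the statement to elementary finite-dimensional linear algebra, using that $\Bset(\Delta_\xi, r)$, respectively $\Bset^-(\Delta_\xi, r)$, is a basis of $V = \PrL(\Delta)$, respectively $V = \PrmL(\Delta)$ (Corollaries~\ref{cor:basis} and~\ref{cor:trimmed-basis}). Write $N = \dim V$; by Lemmas~\ref{lem:count} and~\ref{lem:trimmed-count} this is also the number of indices $p$, hence the number of functionals $\sigma_p$. Since $V'$ has dimension $N$, it suffices to show that the $\sigma_p$ are linearly independent.

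Independence follows by testing against the basis. Suppose $\sum_p c_p \sigma_p = 0$ in $V'$. Evaluating this at $w(\xi; q)$ and using \eqref{eq:dof-duality} gives $c_q = \sum_p c_p \delta_{pq} = 0$ for every $q$. So the family is linearly independent, and having $N$ elements, it is a basis of $V'$.

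Alternatively, one can argue by spanning. Any $\ell \in V'$ is determined by its values on the basis, and by linearity and the expansion in \eqref{eq:dof-duality}, $\ell = \sum_p \ell(w(\xi; p)) \, \sigma_p$. Either route works; the expansion $u = \sum_p \sigma_p(u) w(\xi; p)$ itself follows from the duality relation once we write $u$ in the basis and apply $\sigma_p$.

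Unisolvence is then a restatement. If $u \in V$ satisfies $\sigma_p(u) = 0$ for all $p$, the expansion gives $u = 0$. Equivalently, the map $V \to \R^N$, $u \mapsto (\sigma_p(u))_p$, is an isomorphism.

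There is no real obstacle; the only point needing care is the existence of the family $\{\sigma_p\}$, which the statement assumes. If existence has to be justified, take the coordinate functionals of the basis $\Bset$: they are well defined because the expansion in a basis is unique, which is where Corollary~\ref{cor:basis} (distinct triplets give distinct, linearly independent forms) is essential.
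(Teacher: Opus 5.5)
Your proof is correct and follows the same route as the paper: you show linear independence by evaluating $\sum_p c_p \sigma_p = 0$ at each $w(\xi; q)$, then use the counts of Lemmas~\ref{lem:count} and~\ref{lem:trimmed-count} to conclude that the family is a basis of the dual space. Your additional remarks, on the spanning route and on constructing the $\sigma_p$ as coordinate functionals of the basis, are correct but not needed, since the paper takes the existence of $\{\sigma_p\}$ as a hypothesis.
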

\begin{proof}
  If $\sum_p c_p \, \sigma_p = 0$, evaluation at $w(\xi; q)$ gives $c_q = 0$ for
  every triplet $q$ of the basis, so the family is linearly independent. Its
  cardinality is $\dim \PrL$ by \eqref{eq:count}, respectively $\dim \PrmL$ by
  \eqref{eq:trimmed-count}, so it is a basis of the dual space.
\end{proof}

Since the definition \eqref{eq:dof-duality} is implicit, the functionals are implemented using a numerical change of basis. For any family $\ell = (\ell_q)$ of $\dim \PrL$
(respectively $\dim \PrmL$) functionals unisolvent on the space, the matrix
$M = (\ell_p (w(\xi; q)))_{pq}$ is invertible and
\begin{equation}\label{eq:dof-vandermonde}
  \sigma_p = \sum_q (M^{-1})_{pq} \, \ell_q.
\end{equation}
$M$ is assembled and inverted once per reference simplex.
We take for $\ell$ the
canonical degrees of freedom of finite element exterior calculus, of the form $u \to \int_f (\trs_f u) \wedge q$, the
integral moments of form traces against polynomial spaces on the
faces \cite{ArnoldFalkWinther2006}. Applied to a field $u$ external to the space,
$\sigma_p (u) = \sum_q (M^{-1})_{pq} \, \ell_q (u)$ then matches every canonical
moment of $u$, so the induced interpolation operator is the canonical
interpolant of finite element exterior calculus, commuting diagram
included, expressed in the rotating basis. The basis, the matrices $T(\pi)$
and the conformity enforcement do not depend on the choice of $\ell$.

To note, the trace properties of the geometric decomposition force $M$ to
be block upper triangular, where the blocks correspond to the non-empty
face bubble spaces. However, the DOF bases proposed in \cite{ArnoldFalkWinther2009}
are not dual to the bases they introduce, so $M$ is not trivial. It needs to
be computed in practice, either to implement the DOFs as in
\eqref{eq:dof-vandermonde}, or the shape functions. Let us also remark that the
relabelling formulas introduced in this paper can also be used to improve the
approaches such as \cite{ScroggsWells2026} where DOF are prescribed. Indeed, using
the DOF from \cite{ArnoldFalkWinther2009} together with the rotating bases for the
moment's test polynomials $q$, it would be possible to analytically compute
the relabelling relations for the DOF functionals.

In practice, the degree-of-freedom functionals are mostly used for interpolation of user-provided fields as boundary or
initial data. Assembly, evaluation and error computation only require shape
functions. When shape functions are prescribed in closed form, neither $M$ nor any DOF ever affects a FEM
computation, in contrast with the usual implementations, based on Ciarlet's construction, described in the introduction, where the
transformation applied during assembly is itself computed from the
DOF functionals.

\section{Conforming assembly on a mesh}\label{sec:conformity}

By Theorem~\ref{thm:conformity-reduction}, the conformity of a piecewise polynomial space is a statement about relabellings of the reference simplex. This section makes it explicit for the rotating bases: each cell carries the basis written in the barycentric coordinates sorted by global vertex index, and the identification of basis functions across cells is defined by face and position alone.

The mesh supplies for each cell $K$ the local-to-global vertex index map $\operatorname{ltg}_K$, with $\operatorname{ltg}_K(i) = g_i$ the global vertex index of the vertex of $K$ corresponding to $v_i$. Let $\pi_K \in S_{D+1}$ be the permutation \emph{consistent with the global vertex index}, that is, such that
\[ \pi_K(\operatorname{ltg}_K) = (g_{\pi_K^{-1}(0)}, \dots, g_{\pi_K^{-1}(D)}) \]
is increasing, with the action \eqref{eq:perm-action} of permutations on vectors indexed by labels. Then $\lambda_{\pi_K(i)} = \xi_i$, so that $\lambda$ is obtained from $\xi$ by $\pi_K$ as in \sect{sec:prelim-simplex}, $A_{\pi_K} : \Delta_\xi \to \Delta_\lambda$. The physical map of $K$ is the affine map
\begin{equation}\label{eq:geometric-map}
  \Phi_K : \Delta_\lambda \to K
\end{equation}
sending the $j$-th vertex of $\Delta_\lambda$ to the vertex of $K$ with the $j$-th smallest global vertex index; in local labels, $\Phi_K \circ A_{\pi_K} : \Delta_\xi \to K$ sends $v_i$ to the vertex of global index $g_i$, which is the cell map supplied by the mesh, \sect{sec:prelim-simplex}. The \emph{rotated basis} of $K$ is the pushforward of $\{w(\lambda; \nu) : \nu \in \Bset(\Delta_\lambda, r)\}$ by $\Phi_K$. Since affine maps preserve barycentric coordinates, it is the geometrically decomposed basis written in the barycentric coordinates $\lambda_K = \lambda \circ \Phi_K^{-1}$ of $K$, labelled by increasing global vertex index. It is defined by the global vertex indices alone.

The rotated basis is computed from the reference element, built in the coordinates $\xi$ of $\Delta$, by pulling back through $\Phi_K \circ A_{\pi_K}$: the pushforward by $\Phi_K$ of $w(\lambda; \nu)$ is the pushforward by $\Phi_K \circ A_{\pi_K}$ of $A_{\pi_K}^* w(\lambda; \nu)$, and on the reference simplex
\begin{equation}\label{eq:rotated-in-reference}
  A_{\pi_K}^* \, w(\lambda; \nu) = \sum_{\mu} T(\pi_K^{-1})_{\nu \mu} \, w(\xi; \mu),
\end{equation}
by Theorems~\ref{thm:U3} and~\ref{thm:T3} applied at $\pi_K^{-1}$ (Corollaries~\ref{cor:cob} and~\ref{cor:trimmed-cob}). Every rotated basis function is a signed combination of at most $D$ (full space) or two (trimmed space) reference shape functions, with coefficients the row $\nu$ of $T(\pi_K^{-1})$. No floating-point computation is involved; the only remaining step is the physical map $\Phi_K$.

For a face $f$ of $\Delta$, let $\Bset_f$ be the set of pairs $\nu$, $\nu = (k, \alpha)$ in the full space and $\nu = (e, \alpha)$ in the trimmed space, such that $(f, \nu) \in \Bset(\Delta_\lambda, r)$. With $f$ fixed, $\nu$ labels the basis functions owned by $f$ uniquely, and we write $(f, \nu)$ for the function. For faces $f, f'$ of equal dimension let $\sigma_{f \to f'} : f \to f'$ be the unique increasing bijection between the two label sets and let $\iota_{f \to f'} : \Bset_f \to \Bset_{f'}$, $\nu \mapsto \sigma_{f \to f'}(\nu)$, relabel $\nu$ by $\sigma_{f \to f'}$ acting as in \eqref{eq:perm-action}; it is a bijection, since increasing maps preserve minima and hence anchoring, and $\iota_{f' \to f''} \circ \iota_{f \to f'} = \iota_{f \to f''}$. A \emph{position map} is a family of bijections $p_f : \Bset_f \to \{0, \dots, \# \Bset_f - 1\}$ such that
\begin{equation}\label{eq:position-compat}
  p_{f'} \circ \iota_{f \to f'} = p_f \qquad \text{for all faces } f, f' \text{ of equal dimension}.
\end{equation}
We choose $p_{f_d}$ arbitrarily for a reference face $f_d$ of each dimension $d$ and set $p_f \doteq p_{f_d} \circ \iota_{f \to f_d}$; then $p_{f'} \circ \iota_{f \to f'} = p_{f_d} \circ \iota_{f' \to f_d} \circ \iota_{f \to f'} = p_{f_d} \circ \iota_{f \to f_d} = p_f$.

For a face $\hat f$ of $\Delta$ in the local labels of $K$, $\operatorname{ltg}_K(\hat f)$ is the set of global vertex indices of the corresponding face of $K$, which identifies that mesh face; Gridap and many other finite element codes store it as the local-to-global map $\operatorname{ltg}^d_K$ of the $d$-dimensional faces, $\operatorname{ltg}^0_K = \operatorname{ltg}_K$. The rotated function $(f, \nu)$ of $K$, $f$ in sorted labels, is owned by the mesh face $\operatorname{ltg}_K(\pi_K^{-1}(f)) = \{(\pi_K(\operatorname{ltg}_K))_i : i \in f\}$.

\begin{definition}[global identification]\label{def:identification}
    Let $(f_+, \nu_+)$ and $(f_-, \nu_-)$ be rotated functions of the cells $K_+$ and $K_-$. They are \emph{identified}, $(f_+, \nu_+) \sim (f_-, \nu_-)$, if
  \begin{equation}\label{eq:identification}
    \operatorname{ltg}_{K_+}(\pi_+^{-1}(f_+)) = \operatorname{ltg}_{K_-}(\pi_-^{-1}(f_-)) \qquad \text{and} \qquad p_{f_+}(\nu_+) = p_{f_-}(\nu_-).
  \end{equation}
  
\end{definition}

The first condition states that both functions are owned by the same mesh face $F$, the second fixes the face-wise ordering. Equivalently, $(f, \nu)$ receives the global DOF index $(\operatorname{ltg}^d_K(\pi_K^{-1}(f)), p_f(\nu))$, which is the index of the global shape function by \eqref{eq:dof-duality}, and identified functions are those with the same global DOF index. The assembled space is spanned by the global basis functions obtained by gluing the rotated functions according to \eqref{eq:identification}.

\begin{corollary}[conformity of the rotated bases]\label{cor:conformity}\label{cor:assembly}
    Let $K_\pm$ share the mesh face $F$.
  \begin{enumerate}
  \item A rotated function of $K_+$ owned by a mesh face $G \subseteq F$ is identified with exactly one rotated function of $K_-$, and that function is owned by $G$.
  \item Identified rotated functions have the same trace on $F$.
  \item Rotated functions of $K_\pm$ owned by faces not contained in $F$ have zero trace on $F$.
  \end{enumerate}
  Consequently, the space assembled by \eqref{eq:identification} is $H \Lambda^1$-conforming.
  
\end{corollary}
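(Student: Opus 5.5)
The plan is to reduce everything to the face $F$ and compare the two cells through their sorted coordinates. Since the rotated basis of a cell depends only on the global vertex indices, both cells read $F$ in the same increasing order. Let $F$ correspond to the sorted-label faces $h_\pm$ of $\Delta_\lambda$ in $K_\pm$. The restriction of $\Phi_{K_-}^{-1}\circ\Phi_{K_+}$ to $h_+$ is the affine map sending the $j$-th smallest vertex of $h_+$ to the $j$-th smallest vertex of $h_-$. This is because both $\Phi_{K_\pm}$ send vertices of $\Delta_\lambda$ to vertices of $K_\pm$ in increasing global index, and the vertices of $F$ keep their relative global order inside both cells.

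Hence we may choose $\pi\in S_{D+1}$ in Theorem~\ref{thm:conformity-reduction} to agree with the increasing bijection $\sigma_{h_+\to h_-}$ on $h_+$. The same holds for every subface: a face $g_+\subseteq h_+$ owning a function maps to $g_-=\pi(g_+)\subseteq h_-$, and $\pi|_{g_+}=\sigma_{g_+\to g_-}$ is increasing.

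\textbf{Item 1.} A rotated function $(g_+,\nu_+)$ of $K_+$ is owned by the mesh face $G=\operatorname{ltg}_{K_+}(\pi_+^{-1}(g_+))\subseteq F$. In $K_-$ the same mesh face $G$ is the image of exactly one sorted-label face, namely $g_-=\pi(g_+)$, since $\operatorname{ltg}_{K_-}\circ\pi_-^{-1}$ is injective. So the first condition of \eqref{eq:identification} forces the face to be $g_-$. The second condition $p_{g_-}(\nu_-)=p_{g_+}(\nu_+)$ then has the unique solution $\nu_-=\iota_{g_+\to g_-}(\nu_+)$, by \eqref{eq:position-compat} and bijectivity of $p_{g_-}$.

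\textbf{Item 2.} This is the key step. Because $\pi$ is increasing on $g_+$, it maps $\min g_+$ to $\min g_-$, and more generally preserves the order of the labels in $g_+$. Therefore Theorem~\ref{thm:U3} is always in the relabelled case: the filter hit requires $\pi(k)=\min\pi(g_+)$ with $k\neq\min g_+$, which is impossible. Likewise Theorem~\ref{thm:T3} is relabelled with $\varepsilon=+1$, since $\pi(e)$ stays sorted and $\pi(e_1)=\min\pi(g_+)$. This gives
\[ A_{\pi^{-1}}^* w(\lambda;g_+,\nu_+)=w(\lambda;g_-,\iota_{g_+\to g_-}(\nu_+)). \]
The remark after Theorem~\ref{thm:conformity-reduction} justifies the arbitrary extension of $\pi$ outside $h_+$, because the basis data live on $g_+$. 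Theorem~\ref{thm:conformity-reduction}, applied with the cell maps $\Phi_{K_\pm}$ on $\Delta_\lambda$ and $\hat u_\pm$ the two identified functions, then yields equality of traces on $F$. The only subtlety is ensuring that the face-minimum tests involve only labels in $g_+\subseteq h_+$. This holds because $f$, $k$, $e$ and $\supp\alpha$ are all inside $g_+$.

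\textbf{Item 3.} This is Lemma~\ref{lem:trace} (respectively Lemma~\ref{lem:trimmed-trace}) applied with $g=h_\pm$, pulled through the affine cell map, which commutes with traces.

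\textbf{Conclusion.} Conformity follows from a trace decomposition on $F$. A global function restricted to $K_\pm$ is a combination of rotated functions, with the coefficients of identified functions equal. Its traces on $F$ from each side therefore consist of matching contributions from functions owned by faces $G\subseteq F$, by items 1 and 2, plus zero contributions from all other functions, by item 3. Hence the traces agree on every shared face, which is the $H\Lambda^1$-conformity criterion.
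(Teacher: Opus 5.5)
Your proof is correct. Items~1 and~3 and the final trace decomposition match the paper's argument; the genuine difference is in item~2.

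The paper proves item~2 directly on $F$. Since $g_\pm$ both list the vertices of $G$ by increasing global index, $\sigma_{g_+\to g_-}$ sends the label of each vertex of $G$ in $K_+$ to its label in $K_-$. The barycentric coordinates of these vertices restricted to $F$ are intrinsic to $F$, so $\lambda^+_a=\lambda^-_{\sigma_{g_+\to g_-}(a)}$ on $F$. Because \eqref{eq:w} and \eqref{eq:trimmed-w} involve only labels of the owning face, the two traces coincide.

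You instead route item~2 through the reference relabelling calculus. You choose $\pi$ in Theorem~\ref{thm:conformity-reduction} increasing on the shared face. You check that such a $\pi$ never produces a filter hit in Theorem~\ref{thm:U3}: a hit would need $\alpha_k>0$, hence $k\neq\min f$ by Theorem~\ref{thm:U1}, together with $\pi(k)=\min\pi(f)=\pi(\min f)$. You also check that it always lands in the relabelled case of Theorem~\ref{thm:T3} with $\varepsilon=+1$. You conclude that $A_{\pi^{-1}}^*$ carries the $K_+$ function exactly onto its identified partner.

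What your route buys:
\begin{itemize}
\item It states explicitly that, under the sorted convention, gluing across a shared face is a plain order-preserving relabelling with no filter hits and no sign flips.
\item It makes concrete the section's claim that conformity is a statement about reference relabellings.
\item It gives $A_{\pi^{-1}}^*\hat u_+=\hat u_-$ on all of $\Delta$, for any extension of $\pi$ off $h_+$, not only equality of traces.
\end{itemize}

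The paper's route is lighter. It needs neither Theorem~\ref{thm:conformity-reduction}, nor the pullback theorems, nor any extension of $\pi$, only the locality of the basis formulas. The anchoring preservation you extract from Theorems~\ref{thm:U3} and~\ref{thm:T3} enters there only through the bijectivity of $\iota_{g_+\to g_-}$.

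A minor notational point: the left-hand side of your display should read $A_{\pi^{-1}}^* w(\xi;g_+,\nu_+)$. That is, you must rename the sorted reference coordinates of $K_+$ as the $\xi$ of Theorem~\ref{thm:conformity-reduction} before applying it.
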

\begin{proof}
    (1) Let $g_\pm$ be the sorted labels of $G$ in $K_\pm$, so that $\operatorname{ltg}_{K_\pm}(\pi_\pm^{-1}(g_\pm)) = G$. By \eqref{eq:identification}, $(g_+, \nu_+)$ is identified with the functions $(g_-, \nu_-)$ of $K_-$ such that $p_{g_-}(\nu_-) = p_{g_+}(\nu_+)$, and with no function owned by another face. Since $p_{g_-}$ is a bijection onto $\{0, \dots, \# \Bset_{g_-} - 1\}$ and $\# \Bset_{g_-} = \# \Bset_{g_+}$, there is exactly one such $\nu_-$.
  (2) Both $g_+$ and $g_-$ list the vertices of $G$ by increasing global vertex index, hence the increasing bijection $\sigma_{g_+ \to g_-}$ maps the label of each vertex of $G$ in $K_+$ to its label in $K_-$. The barycentric coordinate of a vertex of $G$ restricted to $F$ is intrinsic to $F$, so $\lambda^+_a = \lambda^-_{\sigma_{g_+ \to g_-}(a)}$ on $F$ for every $a \in g_+$. Since \eqref{eq:w} and \eqref{eq:trimmed-w} involve only the labels of the owning face, $(g_+, \nu_+)$ and $(g_-, \iota_{g_+ \to g_-}(\nu_+))$ have the same trace on $F$. This is the function of (1), owned by $G$ and $p_{g_-}(\iota_{g_+ \to g_-}(\nu_+)) = p_{g_+}(\nu_+)$ by \eqref{eq:position-compat}.
  (3) This is Lemmas~\ref{lem:trace} and~\ref{lem:trimmed-trace}.
  Consequently, the trace on $F$ of a glued basis function is either zero on both sides, by (3), or the common trace of two identified functions, by (1) and (2); it is single valued on every mesh face.
  
\end{proof}

\section{Implementation in Gridap.jl}\label{sec:implementation}

This section describes how the closed forms from Theorems~\ref{thm:U3} and~\ref{thm:T3} are used in the open-source Julia implementation built on the Gridap finite element ecosystem \cite{BadiaVerdugo2020,VerdugoBadia2022}.

\subsection{Rotating bases implementation}\label{sec:impl-bases}

Both rotating bases are implemented as subtypes of Gridap's
\lstinline|PolynomialBasis|, the abstraction Gridap uses for every reference shape-function
family.
A basis instance stores Gridap's scalar Bernstein basis together with per-basis-function index
tables. For the full space these are the triplet table $(f, k, \alpha)$ of
Definition~\ref{def:spanning}, the identifier of $\alpha$ in the scalar Bernstein basis, and the
precomputed directional forms
$\psi(\xi; f, k, \ind{\alpha})$.
For the trimmed
space, the table stores the pair $(e_1, e_2)$ and the two constant 1-forms
$\dd \xi_{e_1}$, $\dd \xi_{e_2}$. Evaluation computes
$b(\xi; \alpha)(\xi_{e_1} \dd \xi_{e_2} - \xi_{e_2} \dd \xi_{e_1})$ pointwise.
The Bernstein polynomials are computed once per point, using an in-place de Casteljau algorithm, and re-used to assemble all shape functions.
The inner evaluation loops are, up to field prefixes, the following, with \lstinline|cB|
the vector of scalar Bernstein values at the point:

\begin{lstlisting}
# full: B_α(λ(x)) · ψ
ω[i,w] = cB[α_ids[w]] * Ψ[w]

# trimmed: bare λ^α  ...  · φ(ξ; e1, e2)
bb     = nrm[w] * cB[α_ids[w]]
ω[i,w] = bb*λ[e1s[w]]*Je2[w] - bb*λ[e2s[w]]*Je1[w]
\end{lstlisting}

The array \lstinline|nrm| divides out the multinomial factor of Gridap's Bernstein basis, since the trimmed calculus of \sect{sec:trimmed} requires the bare monomials of Remark~\ref{rem:normalisation}.

\subsection{The rotation kernel}\label{sec:impl-kernel}

The implementation of the change-of-basis calculus of \sects{sec:full}{sec:trimmed} is summarised by the following two functions. Each takes the triplet
table \lstinline|entries|, its inverse dictionary \lstinline|idx|, a basis-function index \lstinline|w| and a
permutation \lstinline|π|, and returns the list of \lstinline|(coefficient, index)| pairs of the
pullback expansion. The full space implements Theorem~\ref{thm:U3},

\begin{lstlisting}
function rotate(entries, idx, w, π)                    # full space
  F, k, α = entries[w]
  πF = sort(π[F]);  πk = π[k];  πα = α[invperm(π)]
  if α[k] > 0 && πk == minimum(πF)                     # filter hit
    [(-1.0, idx[(πF, v, πα)]) for v in πF if v != πk]  # ≤ D terms, all −1
  else
    [(1.0, idx[(πF, πk, πα)])]                         # single term, +1
  end
end
\end{lstlisting}

and the trimmed space implements Theorem~\ref{thm:T3},

\begin{lstlisting}
function rotate(entries, idx, w, π)                    # trimmed space
  F, (e1, e2), α = entries[w]
  πF = sort(π[F]);  πe1, πe2 = π[e1], π[e2];  πα = α[invperm(π)]
  ε  = πe1 < πe2 ? 1.0 : -1.0                          # alternation sign ε(π(e))
  eπ1, eπ2 = minmax(πe1, πe2);  m = minimum(πF)        # π(e)↑ and min π(f)
  if m != eπ1                                          # filter hit: two shifts ρ
    α1 = copy(πα); α1[m] -= 1; α1[eπ1] += 1
    α2 = copy(πα); α2[m] -= 1; α2[eπ2] += 1
    [(ε, idx[(πF, (m, eπ2), α1)]), (-ε, idx[(πF, (m, eπ1), α2)])]
  else
    [(ε, idx[(πF, (eπ1, eπ2), πα)])]                   # single signed term
  end
end
\end{lstlisting}

The code shows the structure proved above: every coefficient is $\pm 1$, the only inversion is the permutation \lstinline|invperm|, the reverse pullback re-runs the same kernel at \lstinline|invperm(π)| (Theorems~\ref{thm:U4} and \ref{thm:T4}), and the hit test costs two integer comparisons in the full space and one in the trimmed space.

The transformation matrix $T(\pi)$ of Corollaries~\ref{cor:cob} and \ref{cor:trimmed-cob} need not be assembled. A small
cache object stores the list of rows produced by the kernel for encountered permutations. At most $(D+1)!$ different permutations may arise ($24$ in three
dimensions), so the cached data for all of them is a few kilobytes.

\begin{remark}[1-based indexing]
  Julia, and every Gridap
  interface underneath (Bernstein term identifiers, polytope face tables, vertex
  numbering), is 1-based, so the code labels the vertices $1, \dots, D+1$ instead of
  $0, \dots, D$. All label logic is order-theoretic (comparisons and minima), so the
  translation is the uniform shift by one.
\end{remark}

\subsection{Global assembly}\label{sec:impl-mesh}

Each space is implemented as a Gridap reference element on $\Delta$ with the rotating basis as shape functions. This is the only mesh-independent object; the rest of this section describes how every cell obtains its own conforming basis from it.

The rotated basis of a cell $K$ is given by \eqref{eq:rotated-in-reference} and its global DOF indices by \eqref{eq:identification}, \sect{sec:conformity}. The assembly loop of Gridap, as of many finite element codes, has the maps $\operatorname{ltg}^d_K$ at hand, indexed by the local faces of the reference polytope. It is therefore convenient to store the rotated basis of $K$ by \emph{local} face, reordering the faces by
\[ \Pi_K : (f, p) \mapsto (\pi_K^{-1}(f), p), \]
the block of the sorted face $f$ moved to the block of its local labelling $\pi_K^{-1}(f)$, positions untouched; the first condition of \eqref{eq:identification} is then read off $\operatorname{ltg}^d_K$ directly. Let $\Pi(\pi_K)$ be the corresponding permutation matrix, $\Pi(\pi_K)_{(\hat f, q), (f, p)} = 1$ if $\hat f = \pi_K^{-1}(f)$ and $q = p$, and $0$ otherwise. The global, conforming shape-function basis on cell $K$ is obtained from the reference rotating basis with
\[ M_K = \Pi(\pi_K) \, T(\pi_K^{-1}), \]
block diagonal by local face, where $T(\pi_K^{-1})$ is the matrix of \eqref{eq:rotated-in-reference}. $\Pi(\pi_K)$ only adapts the sorted-face block structure of $T(\pi_K^{-1})$ to the local-face convention of the reference element; an assembler indexing cell blocks by sorted face would use $T(\pi_K^{-1})$ alone.

The cell matrix of bi-linear forms on the rotated basis becomes
the congruence $M_K A_K M_K^{\top}$ of its matrix $A_K$ in the unrotated basis.
Although the spaces considered here admit no geometrically decomposed invariant basis for infinitely many degrees \cite{Licht2023symmetry,BerchenkoKogan2024}, the spanning
sets $\Sset(\Delta_\xi, r)$ are always invariant by relabelling (Lemma~\ref{lem:psi-equivariance}). The pullbacks
of basis functions hitting the filter are just single polynomials of the
spanning set. On the regular reference simplex the matrix of any bi-linear form
evaluated at pairs of functions of a relabelled basis is, up to permutation, a
submatrix of the matrix of that form on the spanning set $\Sset(\Delta_\xi, r)$. Using combinatorics
instead of floating point operations could again spare computations and
increase accuracy during assembly, although the pullbacks (or Piola maps) to
the physical element would still have to be accounted for. This strategy has been studied in \cite{kirby_fast_2011,kirby_low-complexity_2014,kirby_fast_2017}.

\section{Numerical experiments}\label{sec:numexp}

All experiments are run with Julia 1.12 and Gridap 0.20.10 which are both open
source under the MIT license. The
test suite is archived in \cite{BadiaManyerMarteau2026}. Throughout, ``full'' denotes the
rotating basis of $\PrL$ and ``trimmed'' that of $\PrmL$, and $n$ denotes the
dimension of the local basis.

The following experiments first validate the analytical formulas for the rotation
changes of basis, the correctness of the finite element implementation, and of
the conforming assembly of the global FE spaces. Then, the convergence rates for
$L^2$ projection and for a curl--curl problem are verified.

\subsection{Exactness and the representation property}\label{sec:exp-exact}

The first experiments validate Corollaries~\ref{cor:cob} and
\ref{cor:trimmed-cob} for both spaces, $D \in \{2, 3\}$ and $r \leq 4$. For
every $\pi, \tau \in S_{D+1}$, the entries of $T(\pi)$ are exactly $-1$, $0$ or $+1$, and the identities $T(\pi) \, T(\pi^{-1}) = I$ and $T(\pi \circ \tau) = T(\tau) \, T(\pi)$ hold exactly in integer arithmetic. The transformation law is validated against
numerically pulled-back basis functions at $20$ interior points up to machine
precision. The checks are run for all the $36$ $(\pi, \tau)$ pairs in two dimensions
and all the $576$ ones in three dimensions.

\subsection{Sparsity and cost}\label{sec:exp-sparsity}

The second experiment quantifies the sparsity statement of
Corollaries~\ref{cor:cob} and \ref{cor:trimmed-cob}. \tab{tab:sparsity} reports,
per space and degree, the average number of multi-term rows per vertex
permutation, the mean and maximum number of non-zeros per row, and the time to
produce the signed index map of one permutation, cold (first request) and hot
(memoised). Multi-term rows are absent below $r = 3$ in the full space, the first
degree with a fully supported exponent on a face of dimension at least two, and
below $r = 2$ in the trimmed space, the first degree at which such faces own
basis functions. Even at $r = 4$ in three dimensions the mean row carries at most
$1.27$ entries, and the density of $T$ decreases with $r$. A memoised lookup
costs about ten nanoseconds, and, at the degrees reported, the cold computation
of a full map is cheaper than materialising the same map as a dense $n \times n$
matrix, the operation \sect{sec:impl-mesh} performs once per permutation.

\begin{table}[htbp]
  \centering
  \small
  \begin{tabular}{@{}lcccccccc@{}}
    \toprule
    \textbf{space} & \textbf{D} & \textbf{r} & \textbf{n} & \textbf{multi-term rows} & \textbf{nnz/row} & \textbf{nnz vs $n^2$} & \textbf{map, cold} & \textbf{map, hot}\\
                   &            &            &            & \textbf{mean/$\pi$}      & \textbf{mean (max)} &                    &                    &                  \\
    \midrule
    full    & 2 & 2 & 12  & 0     & 1.00 (1) & 8.3\%  & 1.2 $\mu$s  & 10 ns\\
    full    & 2 & 3 & 20  & 0.67  & 1.03 (2) & 5.2\%  & 2.0 $\mu$s  & 10 ns\\
    full    & 2 & 4 & 30  & 2.00  & 1.07 (2) & 3.6\%  & 3.0 $\mu$s  & 10 ns\\
    full    & 3 & 3 & 60  & 2.67  & 1.04 (2) & 1.7\%  & 6.3 $\mu$s  & 11 ns\\
    full    & 3 & 4 & 105 & 8.75  & 1.09 (3) & 1.0\%  & 10.8 $\mu$s & 11 ns\\
    trimmed & 2 & 2 & 8   & 0.67  & 1.08 (2) & 13.5\% & 0.84 $\mu$s & 10 ns\\
    trimmed & 2 & 3 & 15  & 2.00  & 1.13 (2) & 7.6\%  & 1.6 $\mu$s  & 10 ns\\
    trimmed & 2 & 4 & 24  & 4.00  & 1.17 (2) & 4.9\%  & 2.6 $\mu$s  & 10 ns\\
    trimmed & 3 & 3 & 45  & 9.50  & 1.21 (2) & 2.7\%  & 5.0 $\mu$s  & 11 ns\\
    trimmed & 3 & 4 & 84  & 22.00 & 1.26 (2) & 1.5\%  & 9.7 $\mu$s  & 11 ns\\
    \bottomrule
  \end{tabular}
  \caption{Sparsity of the rotation maps, aggregated over all $(D+1)!$
    permutations, and the cost of producing the signed index map of one
    permutation (median timings, Intel Core i7-13800H). The degrees $r = 1$,
    whose maps are index permutations, signed in the trimmed space, and $r = 2$ in three dimensions
    are omitted.
    Single-entry $-1$ hits on edges occur from $r = 2$ in the full space.}
  \label{tab:sparsity}
\end{table}

\subsection{Conformity on scrambled meshes}\label{sec:exp-conformity}

On meshes whose cells order their vertices arbitrarily, the traces of the global
basis functions must match across every interior facet. We call a mesh
\emph{scrambled} when the local vertex ordering of each of its cells has been
arbitrarily permuted. Conformity is validated on two types of scrambled meshes,
two-cell ones and larger ones. The two-cell meshes run $18$ ordering pairs for
two triangles and six for two tetrahedra, including a non-cyclic permutation of
the shared face. The larger meshes are simplexified Cartesian grids in which
every cell's ordering is scrambled by a seeded random permutation.
\tab{tab:conformity} reports the maximum trace jump over all interior facets,
all global basis functions and all facet sample points, in the two settings. In
all cases, the trace jump is at machine precision with the rotation active and
$O(1)$ otherwise.

\begin{table}[htbp]
  \centering
  \footnotesize
  \begin{tabular}{@{}lcccccc@{}}
    \toprule
    \textbf{space} & \textbf{D} & \textbf{r} & \textbf{cells} & \textbf{dofs} & \textbf{jump, rotation on} & \textbf{jump, rotation off}\\
    \midrule
    full    & 2 & 1 & 2 / 128 & 416  & $2.8 \cdot 10^{-17}$ / $4.7 \cdot 10^{-16}$ & $1.0$ / $1.0$  \\
    full    & 2 & 2 & 2 / 128 & 1008 & $4.7 \cdot 10^{-17}$ / $8.9 \cdot 10^{-16}$ & $1.0$ / $1.0$  \\
    full    & 2 & 3 & 2 / 128 & 1856 & $6.0 \cdot 10^{-17}$ / $1.3 \cdot 10^{-15}$ & $0.75$ / $0.75$\\
    full    & 3 & 1 & 2 / 384 & 1208 & $5.6 \cdot 10^{-17}$ / $2.8 \cdot 10^{-16}$ & $0.90$ / $0.90$\\
    full    & 3 & 2 & 2 / 384 & 4404 & $1.1 \cdot 10^{-16}$ / $4.4 \cdot 10^{-16}$ & $0.60$ / $0.60$\\
    trimmed & 2 & 1 & 2 / 128 & 208  & $2.8 \cdot 10^{-17}$ / $8.9 \cdot 10^{-16}$ & $2.0$ / $2.0$  \\
    trimmed & 2 & 2 & 2 / 128 & 672  & $1.1 \cdot 10^{-16}$ / $1.2 \cdot 10^{-15}$ & $1.0$ / $1.0$  \\
    trimmed & 2 & 3 & 2 / 128 & 1392 & $2.4 \cdot 10^{-17}$ / $1.6 \cdot 10^{-15}$ & $0.75$ / $0.75$\\
    trimmed & 3 & 1 & 2 / 384 & 604  & $1.1 \cdot 10^{-16}$ / $4.4 \cdot 10^{-16}$ & $1.8$ / $1.8$  \\
    trimmed & 3 & 2 & 2 / 384 & 2936 & $1.1 \cdot 10^{-16}$ / $6.7 \cdot 10^{-16}$ & $0.81$ / $0.81$\\
    \bottomrule
  \end{tabular}
  \caption{Maximum trace jump across interior facets, two-cell meshes
    over the tested relative orderings, $18$ in 2D and six in 3D (first number), and fully scrambled simplexified
    Cartesian meshes, $8 \times 8$ in 2D and $4 \times 4 \times 4$ in 3D (second number; the DOF count refers to these),
    using the relabelling change of basis (rotation on) or not (rotation off).}
  \label{tab:conformity}
\end{table}

\subsection[\texorpdfstring{$L^2$}{L2} projection on scrambled meshes]{$L^2$ projection on scrambled meshes}\label{sec:exp-convergence}

The experiment tests the convergence of the $L^2$ projection of a smooth 1-form
with non-vanishing exterior derivative,
\[ u = \sin(\pi x) \cos(\pi y) \, \dd x^1 - \cos(\pi x) \sin(\pi y) \, \dd x^2, \]
on the unit square. Both full and trimmed
spaces are used on simplexified Cartesian meshes with $4$ to $32$ cells per
side. Every mesh is assembled twice from the same connectivity: once with all
cell vertex lists sorted, so that no rotation is needed and it is automatically
skipped, and once with every cell's ordering scrambled by a seeded random
permutation. \tab{tab:convergence} reports the $L^2$ errors and rates on the
scrambled meshes, together with the $L^2$ norm of the difference between the
sorted-mesh and scrambled-mesh discrete solutions. The rates are the expected
$r + 1$ for the full space and $r$ for the trimmed space, and the two discrete
solutions coincide to $10^{-15}$. The curl seminorm of the projection error,
which the $L^2$ projection does not control, converges at rate $r$ (full) and
between $r - 1$ and $r$ (trimmed). In three dimensions (up to $8^3 \times 6$
cells, $r \leq 2$) the affordable levels are pre-asymptotic but approach the
same rates ($1.97$ and $2.75$ full, $0.91$ and $1.92$ trimmed at the finest
pairs), with the same machine-zero sorted-versus-scrambled agreement.

\begin{table}[htbp]
  \centering
  \small
  \begin{tabular}{@{}lccccccc@{}}
    \toprule
    \textbf{space} & \textbf{r} & $\mathbf{8}$ & $\mathbf{16}$ & $\mathbf{32}$ & \textbf{rate} & \textbf{expected} & \textbf{sorted vs scrambled}\\
    \midrule
    full    & 1 & $8.97 \cdot 10^{-3}$ & $2.25 \cdot 10^{-3}$ & $5.63 \cdot 10^{-4}$ & $2.0$ & $2$ & $3.9 \cdot 10^{-16}$\\
    full    & 2 & $5.25 \cdot 10^{-4}$ & $6.71 \cdot 10^{-5}$ & $8.45 \cdot 10^{-6}$ & $3.0$ & $3$ & $5.1 \cdot 10^{-16}$\\
    full    & 3 & $1.95 \cdot 10^{-5}$ & $1.23 \cdot 10^{-6}$ & $7.71 \cdot 10^{-8}$ & $4.0$ & $4$ & $7.4 \cdot 10^{-16}$\\
    trimmed & 1 & $8.00 \cdot 10^{-2}$ & $4.01 \cdot 10^{-2}$ & $2.00 \cdot 10^{-2}$ & $1.0$ & $1$ & $1.7 \cdot 10^{-16}$\\
    trimmed & 2 & $4.06 \cdot 10^{-3}$ & $1.01 \cdot 10^{-3}$ & $2.53 \cdot 10^{-4}$ & $2.0$ & $2$ & $3.8 \cdot 10^{-16}$\\
    trimmed & 3 & $1.83 \cdot 10^{-4}$ & $2.32 \cdot 10^{-5}$ & $2.91 \cdot 10^{-6}$ & $3.0$ & $3$ & $6.4 \cdot 10^{-16}$\\
    \bottomrule
  \end{tabular}
  \caption{$L^2$-projection errors $\|u - u_h\|_{L^2}$ on fully scrambled
    2D meshes with the given number of cells per side, observed rate between the two finest
    levels, and, in the last column, the maximum over all levels of the $L^2$ norm of the difference
    between the sorted-mesh and scrambled-mesh discrete solutions.}
  \label{tab:convergence}
\end{table}

\subsection{A curl--curl problem on scrambled meshes}\label{sec:exp-curlcurl}

Lastly, the definite Maxwell problem $\curl \curl u + u = f$ is solved on the
unit square and the unit cube with the natural boundary condition $n \times
\curl u = 0$, using the weak form $\int_\Omega \langle \curl u, \curl v \rangle +
\int_\Omega \langle u, v \rangle = \int_\Omega \langle f, v \rangle$ for all $v$
in the conforming space.
The exact solutions are chosen so that the boundary condition holds
exactly and $f$ is a multiple of $u$, namely
\[
  u = \sin(\pi x) \cos(\pi y) \, \dd x^1 - \cos(\pi x) \sin(\pi y) \, \dd x^2, \qquad f = (1 + 2\pi^2) u,
\]
in 2D, and
\[
  u = -\sin(\pi x) \cos(\pi y) \cos(\pi z) \, \dd x^1 + \cos(\pi x) \sin(\pi y) \cos(\pi z) \, \dd x^2, \qquad f = (1 + 3\pi^2) u,
\]
in 3D. The meshes, the sorted and scrambled orderings and
the comparison of the two discrete solutions are those of \sect{sec:exp-convergence}.
\tab{tab:curlcurl} reports the errors on the finest level, the rate between the two
finest levels, and the maximum over all levels of the $L^2$ norm of the difference
between the sorted-mesh and scrambled-mesh discrete solutions. In 2D the $L^2$
rates are $r + 1$ (full) and $r$ (trimmed) and the curl-seminorm rates are $r$ for
both spaces, to two digits. In 3D, with $2$ to $8$ cells per side, the rates are
pre-asymptotic and approach the same values. On every mesh the two discrete
solutions coincide to $10^{-12}$ or better, the same linear system being assembled
through different local orderings and solved by the same direct solver.

\begin{table}[htbp]
  \centering
  \footnotesize\setlength{\tabcolsep}{4.5pt}
  \begin{tabular}{@{}lcccccccccc@{}}
    \toprule
    \textbf{space} & \textbf{D} & \textbf{r} & \textbf{cells/side} & \textbf{dofs} & $\|u - u_h\|_{L^2}$ & \textbf{rate} & $\|\dd(u - u_h)\|_{L^2}$ & \textbf{rate} & \textbf{expected} & \textbf{sorted vs scrambled}\\
    \midrule
    full    & 2 & 1 & 32 & 6272  & $9.36 \cdot 10^{-4}$ & $2.0$  & $1.03 \cdot 10^{-1}$ & $1.0$  & $2$, $1$ & $6.1 \cdot 10^{-13}$\\
    full    & 2 & 2 & 32 & 15552 & $9.48 \cdot 10^{-6}$ & $3.0$  & $1.95 \cdot 10^{-3}$ & $2.0$  & $3$, $2$ & $1.7 \cdot 10^{-12}$\\
    full    & 2 & 3 & 32 & 28928 & $9.44 \cdot 10^{-8}$ & $4.0$  & $2.71 \cdot 10^{-5}$ & $3.0$  & $4$, $3$ & $3.6 \cdot 10^{-12}$\\
    trimmed & 2 & 1 & 32 & 3136  & $2.00 \cdot 10^{-2}$ & $1.0$  & $1.03 \cdot 10^{-1}$ & $1.0$  & $1$, $1$ & $2.2 \cdot 10^{-13}$\\
    trimmed & 2 & 2 & 32 & 10368 & $2.80 \cdot 10^{-4}$ & $2.0$  & $1.95 \cdot 10^{-3}$ & $2.0$  & $2$, $2$ & $1.3 \cdot 10^{-12}$\\
    trimmed & 2 & 3 & 32 & 21696 & $3.06 \cdot 10^{-6}$ & $3.0$  & $2.71 \cdot 10^{-5}$ & $3.0$  & $3$, $3$ & $3.9 \cdot 10^{-12}$\\
    full    & 3 & 1 & 8  & 8368  & $1.89 \cdot 10^{-2}$ & $1.85$ & $4.81 \cdot 10^{-1}$ & $0.94$ & $2$, $1$ & $1.4 \cdot 10^{-13}$\\
    full    & 3 & 2 & 8  & 32136 & $7.72 \cdot 10^{-4}$ & $2.94$ & $4.44 \cdot 10^{-2}$ & $1.93$ & $3$, $2$ & $3.4 \cdot 10^{-13}$\\
    trimmed & 3 & 1 & 8  & 4184  & $7.90 \cdot 10^{-2}$ & $0.95$ & $4.81 \cdot 10^{-1}$ & $0.94$ & $1$, $1$ & $6.9 \cdot 10^{-14}$\\
    trimmed & 3 & 2 & 8  & 21424 & $5.71 \cdot 10^{-3}$ & $1.95$ & $4.44 \cdot 10^{-2}$ & $1.93$ & $2$, $2$ & $1.6 \cdot 10^{-13}$\\
    \bottomrule
  \end{tabular}
  \caption{Curl--curl problem on fully scrambled simplexified Cartesian meshes.
    Errors on the finest level ($32$ cells per side in 2D, $8$ in 3D), rates between
    the two finest levels ($4$ to $32$ cells per side in 2D, $2$ to $8$ in 3D),
    expected rates for the $L^2$ and curl-seminorm errors, and the maximum over all
    levels of the $L^2$ norm of the difference between the sorted-mesh and
    scrambled-mesh discrete solutions.}
  \label{tab:curlcurl}
\end{table}

\section{Conclusions and future work}\label{sec:conclusions}

We have constructed new geometrically decomposed bases of $\PrL$ on
simplices, and shown that these and the standard bases of $\PrmL$
are well behaved under vertex relabelling.
Most basis polynomials pull back
to a single signed basis polynomial of the basis defined on
the relabelled coordinates, except on an
explicitly characterised hit set. In such cases, resummation and shift identities
express the pullback as a degree-independent
signed sum of at most $D$ (full), respectively two (trimmed),
terms. The transformation matrices that achieve conformity of the shape-function and DOF bases are given by closed-form combinatorial formulas that are cheap to compute and exact in integer arithmetic, and the implementation confirms this at machine precision.

Although the explicit expansion appears not to have been recorded, the pullback
relabelling theorems apply to the geometrically decomposed bases from
\cite{ArnoldFalkWinther2009} as well as those we introduced. However, the new
rotating bases and their relabelling logic are cheaper to implement. The reason is the definition of the directional 1-forms through the support indicator of the multi-exponent rather than the multi-exponent itself, which gives a degree-independent catalogue of forms and transformations.

The extension to $K$-forms is left for future work. Directional $K$-forms are wedge products of catalogue 1-forms and pullback commutes with the wedge, so the structure of the change of basis by rotation is expected to carry over. The anchoring filter for $K \geq 2$ constrains the whole index set of the wedge, so a filter hit may require several resummations at once instead of the single one of Lemma~\ref{lem:psi-resummation}.

\section*{Acknowledgements}
This research was partially funded by the Australian Government through the Australian Research Council (project number DP220103160). 

\section*{Reproducibility}
The software used to produce the numerical results in this paper is archived in \cite{BadiaManyerMarteau2026}.

\appendix
\section{Bernstein normalisation of the trimmed basis}\label{sec:bernstein}

This section records the counterpart of \sect{sec:trimmed} for the Bernstein
normalisation of Remark~\ref{rem:normalisation}. Write
\begin{equation}\label{eq:bernstein-scaling}
  c(\alpha) \doteq |\alpha|! / (\alpha_0 ! \cdots \alpha_D !), \quad
  \tilde{\BB}(\xi; \alpha) \doteq c(\alpha) \, \BB(\xi; \alpha), \quad
  \tilde{w}(\xi; f, e, \alpha) \doteq c(\alpha) \, w(\xi; f, e, \alpha),
\end{equation}
so that $\tilde{w}$ is the basis function \eqref{eq:trimmed-w} with the Bernstein scalar
factor. The index sets $\Sset^-(\Delta_\xi, r)$ and $\Bset^-(\Delta_\xi, r)$ are unchanged, and so
are Theorem~\ref{thm:trimmed-spanning}, Definition~\ref{def:trimmed-filter}, Theorem~\ref{thm:T1} and Lemma~\ref{lem:trimmed-count} and
Corollary~\ref{cor:trimmed-basis}, whose statements are independent of the value of the scalar
factor. Theorem~\ref{thm:phi-equivariance} and Corollary~\ref{cor:trimmed-sparsity} are unchanged as well,
since the catalogue of Whitney forms does not depend on $\alpha$. Only Theorem~\ref{thm:T2},
Theorem~\ref{thm:T3} and Corollary~\ref{cor:trimmed-cob} change, because the shift $\rho$ transfers one unit of
degree between entries of $\alpha$.

\begin{lemma}[shift ratio]\label{lem:shift-ratio}
  For distinct labels $i \neq j$ and a multi-exponent $\alpha$ with $\alpha_i > 0$,
  \begin{equation}\label{eq:shift-ratio}
    c(\alpha) / c(\rho(i, j) \circ \alpha) = (\alpha_j + 1) / \alpha_i.
  \end{equation}
\end{lemma}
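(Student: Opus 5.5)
The identity is a direct computation with factorials, so I will expand both multinomial coefficients and cancel the common factors.

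First I record the effect of the shift on the entries. Write $\beta \doteq \rho(i, j) \circ \alpha = \alpha - 1_i + 1_j$. The hypothesis $\alpha_i > 0$ guarantees $\beta_i = \alpha_i - 1 \geq 0$, so $\beta$ is a genuine multi-exponent. We also have $\beta_j = \alpha_j + 1$, $\beta_l = \alpha_l$ for $l \notin \{i, j\}$, and $|\beta| = |\alpha|$, as already noted in the proof of Theorem~\ref{thm:T2}.

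Next I form the quotient from the definition \eqref{eq:bernstein-scaling}:
\[
  \frac{c(\alpha)}{c(\beta)} = \frac{|\alpha|!}{|\beta|!} \cdot \frac{\prod_l \beta_l!}{\prod_l \alpha_l!}.
\]
The first factor equals $1$ because the moduli agree. In the second factor, every index $l \notin \{i, j\}$ contributes $\beta_l!/\alpha_l! = 1$. The remaining factors are
\[
  \frac{\beta_i!}{\alpha_i!} = \frac{(\alpha_i - 1)!}{\alpha_i!} = \frac{1}{\alpha_i}
  \quad \text{and} \quad
  \frac{\beta_j!}{\alpha_j!} = \frac{(\alpha_j + 1)!}{\alpha_j!} = \alpha_j + 1.
\]
Multiplying these two factors gives $(\alpha_j + 1)/\alpha_i$, which is \eqref{eq:shift-ratio}.

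The argument has no real obstacle. The only point needing care is the hypothesis $\alpha_i > 0$, which keeps $(\alpha_i - 1)!$ defined and the division by $\alpha_i$ legitimate. Since $i \neq j$, the two modified entries are distinct, so the factors $1/\alpha_i$ and $\alpha_j + 1$ do not interact.
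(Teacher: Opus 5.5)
Your proposal is correct and follows the paper's proof: both use that the shift preserves $|\alpha|$ and only replaces $\alpha_i!$ by $(\alpha_i - 1)!$ and $\alpha_j!$ by $(\alpha_j + 1)!$, so the quotient of multinomial factors is $(\alpha_j + 1)/\alpha_i$. Your version writes out the cancellation explicitly and notes where $\alpha_i > 0$ and $i \neq j$ are used, which the paper leaves implicit.
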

\begin{proof}
  The shift preserves $|\alpha|$, and replaces $\alpha_i !$ by $(\alpha_i - 1)!$ and $\alpha_j !$ by
  $(\alpha_j + 1)!$.
\end{proof}

\begin{corollary}[two-term decomposition, Bernstein]\label{cor:bernstein-decomp}
  With the notation of Theorem~\ref{thm:T2},
  \begin{equation}\label{eq:bernstein-decomp}
    \begin{aligned}
      \tilde{w}(\xi; f, (e_1, e_2), \alpha) =
        &\ \frac{\alpha_{e_1} + 1}{\alpha_m} \, \tilde{w}(\xi; f, (m, e_2), \rho(m, e_1) \circ \alpha)\\
      - &\ \frac{\alpha_{e_2} + 1}{\alpha_m} \, \tilde{w}(\xi; f, (m, e_1), \rho(m, e_2) \circ \alpha).
    \end{aligned}
  \end{equation}
\end{corollary}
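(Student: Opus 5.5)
The plan is to multiply the monomial identity \eqref{eq:trimmed-decomp} of Theorem~\ref{thm:T2} by the Bernstein factor and rewrite each monomial term in its own Bernstein normalisation using Lemma~\ref{lem:shift-ratio}. Under the hypotheses of Theorem~\ref{thm:T2}, we have $m = \min f \notin \{e_1, e_2\}$ and $\alpha_m > 0$. Both shifts $\rho(m, e_1) \circ \alpha$ and $\rho(m, e_2) \circ \alpha$ are therefore well defined multi-exponents, and the lemma applies with $i = m$ and $j = e_1$ or $e_2$.

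The first step is to multiply \eqref{eq:trimmed-decomp} by $c(\alpha)$. The left-hand side then becomes $\tilde{w}(\xi; f, (e_1, e_2), \alpha)$ by \eqref{eq:bernstein-scaling}. Next, for each term on the right with $\beta_\ell = \rho(m, e_\ell) \circ \alpha$, $\ell \in \{1,2\}$, I write
\[
c(\alpha) \, w(\xi; f, (m, e_{3-\ell}), \beta_\ell) = \frac{c(\alpha)}{c(\beta_\ell)} \, \tilde{w}(\xi; f, (m, e_{3-\ell}), \beta_\ell).
\]
Lemma~\ref{lem:shift-ratio} evaluates the ratio as $(\alpha_{e_\ell} + 1)/\alpha_m$. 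Substituting both ratios, with the first right-hand term using $\beta_1$ and the pair $(m, e_2)$ and the second using $\beta_2$ and the pair $(m, e_1)$, yields \eqref{eq:bernstein-decomp}. The sign of each term is inherited directly from Theorem~\ref{thm:T2}.

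The main obstacle is bookkeeping rather than substance. I have to match each shift to the correct coefficient: the term carrying $\rho(m, e_1)$ must receive $(\alpha_{e_1} + 1)/\alpha_m$. I also need to confirm that the ratio uses the entries of $\alpha$ itself and not of the shifted exponent, which is exactly the orientation in which Lemma~\ref{lem:shift-ratio} is stated.

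Membership of both right-hand triplets in $\Bset^-(\Delta_\xi, r)$ carries over unchanged from Theorem~\ref{thm:T2}. The reason is that the index sets do not depend on the scalar normalisation.
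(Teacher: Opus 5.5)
Your proposal is correct and follows the paper's proof: multiply \eqref{eq:trimmed-decomp} by $c(\alpha)$, then rewrite each right-hand term in its own Bernstein normalisation using Lemma~\ref{lem:shift-ratio} at $(m, e_1)$ and $(m, e_2)$, both admissible because $\alpha_m > 0$. Your bookkeeping is also right: the term carrying $\rho(m, e_1)$ gets $(\alpha_{e_1}+1)/\alpha_m$, the other gets $(\alpha_{e_2}+1)/\alpha_m$, and both ratios are expressed in the entries of $\alpha$.
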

\begin{proof}
  Multiply \eqref{eq:trimmed-decomp} by $c(\alpha)$ and use Lemma~\ref{lem:shift-ratio} at $(m, e_1)$
  and $(m, e_2)$, both admissible because $\alpha_m > 0$.
\end{proof}

\begin{corollary}[pullback of the basis functions, Bernstein]\label{cor:bernstein-pullback}
  With the notation of Theorem~\ref{thm:T3}, write $\beta = \pi(\alpha)$. The relabelled case is
  \[ A_{\pi^{-1}}^* (\tilde{w}(\xi; f, e, \alpha)) = \varepsilon \, \tilde{w}(\lambda; \pi(f), \pi(e)^{\sort}, \beta), \]
  and the filter hit is
  \begin{equation}\label{eq:bernstein-pullback-hit}
    \begin{aligned}
      A_{\pi^{-1}}^* (\tilde{w}(\xi; f, e, \alpha)) = \varepsilon \bigl(
        &\ \frac{\beta_{e_1^\pi} + 1}{\beta_{m^\pi}} \,
          \tilde{w}(\lambda; \pi(f), (m^\pi, e_2^\pi), \rho(m^\pi, e_1^\pi) \circ \beta)\\
      - &\ \frac{\beta_{e_2^\pi} + 1}{\beta_{m^\pi}} \,
          \tilde{w}(\lambda; \pi(f), (m^\pi, e_1^\pi), \rho(m^\pi, e_2^\pi) \circ \beta) \bigr).
    \end{aligned}
  \end{equation}
\end{corollary}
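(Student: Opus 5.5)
The plan is to reduce everything to Theorem~\ref{thm:T3}, multiplying through by the Bernstein constants of \eqref{eq:bernstein-scaling}. The constant $c$ is invariant under permutation of the entries of a multi-exponent, so $c(\beta) = c(\pi(\alpha)) = c(\alpha)$. Hence
\[ A_{\pi^{-1}}^* \tilde{w}(\xi; f, e, \alpha) = c(\alpha) \, A_{\pi^{-1}}^* w(\xi; f, e, \alpha) = c(\beta) \, A_{\pi^{-1}}^* w(\xi; f, e, \alpha), \]
because the pullback is linear and $c(\alpha)$ is a scalar.

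\textbf{Relabelled case.} Substitute case (1) of Theorem~\ref{thm:T3}, which gives $\varepsilon \, c(\beta) \, w(\lambda; \pi(f), \pi(e)^{\sort}, \beta)$. This equals $\varepsilon \, \tilde{w}(\lambda; \pi(f), \pi(e)^{\sort}, \beta)$ by definition of $\tilde{w}$.

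\textbf{Filter hit.} I would not rescale \eqref{eq:trimmed-pullback-hit} term by term. Instead I would start from \eqref{eq:trimmed-pullback-raw}: after multiplying by $c(\beta)$ it reads $A_{\pi^{-1}}^* \tilde{w}(\xi; f, e, \alpha) = \varepsilon \, \tilde{w}(\lambda; \pi(f), \pi(e)^{\sort}, \beta)$, with the triplet in $\Sset^-(\Delta_\lambda, r)$ but not anchored. I would then apply Corollary~\ref{cor:bernstein-decomp} on $\Delta_\lambda$ to this triplet, with face $\pi(f)$, sorted pair $(e_1^\pi, e_2^\pi)$, exponent $\beta$ and face minimum $m^\pi$. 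That produces exactly the coefficients $(\beta_{e_1^\pi}+1)/\beta_{m^\pi}$ and $(\beta_{e_2^\pi}+1)/\beta_{m^\pi}$ of \eqref{eq:bernstein-pullback-hit}, and the overall factor $\varepsilon$ passes through.

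The only point requiring care is that the hypotheses of Corollary~\ref{cor:bernstein-decomp}, namely those of Theorem~\ref{thm:T2}, hold on $\Delta_\lambda$.
\begin{itemize}
\item The triplet $(\pi(f), \pi(e)^{\sort}, \beta)$ lies in $\Sset^- \setminus \Bset^-$ with sorted pair. This is precisely the filter-hit condition $e_1^\pi \neq m^\pi$ established in the proof of Theorem~\ref{thm:T3}.
\item Theorem~\ref{thm:T2} then gives $\beta_{m^\pi} > 0$, so the denominators are nonzero. Equivalently, $\alpha_{\pi^{-1}(m^\pi)} > 0$ because $\pi^{-1}(m^\pi) \notin \{e_1, e_2\}$ and the covering condition holds.
\end{itemize}

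I expect no real obstacle; the main risk is bookkeeping. The ratios must be read in terms of $\beta$, not $\alpha$, and the indices $e_1^\pi, e_2^\pi$ must refer to the sorted pair. As a check, one can recover the same result by multiplying \eqref{eq:trimmed-pullback-hit} by $c(\beta)$ and applying Lemma~\ref{lem:shift-ratio} at $(m^\pi, e_1^\pi)$ and $(m^\pi, e_2^\pi)$.
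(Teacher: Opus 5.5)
Your proposal is correct and follows the paper's proof: permutation invariance $c(\pi(\alpha)) = c(\alpha)$ carries \eqref{eq:trimmed-pullback-raw} and the relabelled case over to $\tilde{w}$. The hit case then comes from applying Corollary~\ref{cor:bernstein-decomp} on $\Delta_\lambda$ to $(\pi(f), \pi(e)^{\sort}, \beta)$, and your explicit check that $\beta_{m^\pi} = \alpha_{\pi^{-1}(m^\pi)} > 0$ spells out a hypothesis the paper leaves implicit.
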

\begin{proof}
  The multinomial factor $c$ is invariant under any permutation of the entries of
  $\alpha$, so $c(\pi(\alpha)) = c(\alpha)$ and \eqref{eq:trimmed-pullback-raw} holds with $\tilde{\BB}$ and
  $\tilde{w}$ in place of $\BB$ and $w$. Which of the two cases holds is a property
  of the pair, as in Theorem~\ref{thm:T3}. The hit case then results from applying
  Corollary~\ref{cor:bernstein-decomp} on $\Delta_\lambda$ to $(\pi(f), \pi(e)^{\sort}, \beta)$.
\end{proof}

\begin{corollary}[transformation matrix, Bernstein]\label{cor:bernstein-cob}
  Define $\tilde{T}(\pi)$ as in Corollary~\ref{cor:trimmed-cob} with $\tilde{w}$ in place of $w$, and
  let $S$ be the diagonal matrix whose entry at $\mu = (f, e, \alpha)$ is $c(\alpha)$. Then
  \begin{equation}\label{eq:bernstein-cob}
    \tilde{T}(\pi) = S \, T(\pi) \, S^{-1}.
  \end{equation}
  Consequently, $\tilde{T}(\pi)$ has the sparsity of $T(\pi)$, and $\tilde{T}(\pi)^{-1}
  = \tilde{T}(\pi^{-1})$.
\end{corollary}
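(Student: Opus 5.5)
The plan is to derive \eqref{eq:bernstein-cob} directly from the defining expansions, and then transfer sparsity and the inverse law from Corollary~\ref{cor:trimmed-cob}. Index both bases by $\Bset^-$ and write $\tilde w_\mu = S_{\mu\mu} w_\mu$, that is, $\tilde w_\mu = c(\alpha)\, w_\mu$ for $\mu = (f,e,\alpha)$. The same relation holds on $\Delta_\lambda$ with the same diagonal weights, since $S$ depends only on the index. Pulling back is linear, so
\[ A_{\pi^{-1}}^* \tilde w_\mu(\xi) = S_{\mu\mu} \sum_\nu T(\pi)_{\mu\nu}\, w_\nu(\lambda) = \sum_\nu S_{\mu\mu}\, T(\pi)_{\mu\nu}\, S_{\nu\nu}^{-1}\, \tilde w_\nu(\lambda). \]
The family $\{\tilde w_\nu(\lambda)\}$ is a basis, because it is obtained from $\Bset^-(\Delta_\lambda,r)$ by nonzero rescaling and Corollary~\ref{cor:trimmed-basis} applies. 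The expansion is therefore unique, and its coefficients are the entries of $\tilde T(\pi)$. This gives $\tilde T(\pi) = S\,T(\pi)\,S^{-1}$.

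For the consequences, conjugation by an invertible diagonal matrix only rescales entries by nonzero factors. Hence $\tilde T(\pi)$ has exactly the nonzero pattern of $T(\pi)$: one entry or two entries per row, as described in Corollary~\ref{cor:trimmed-cob}. For the inverse, $\tilde T(\pi)\tilde T(\pi^{-1}) = S\,T(\pi)\,T(\pi^{-1})\,S^{-1} = I$, using $T(\pi)^{-1} = T(\pi^{-1})$ from Corollary~\ref{cor:trimmed-cob}.

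As a consistency check against Corollary~\ref{cor:bernstein-pullback}, take a hit row $\mu = (f,e,\alpha)$ and the entry at $\nu = (\pi(f),(m^\pi,e_2^\pi),\rho(m^\pi,e_1^\pi)\circ\beta)$. This entry equals
\[ \varepsilon\, c(\alpha)/c(\rho(m^\pi,e_1^\pi)\circ\beta). \]
Since $c(\alpha) = c(\beta)$, Lemma~\ref{lem:shift-ratio} turns it into $\varepsilon(\beta_{e_1^\pi}+1)/\beta_{m^\pi}$, which matches \eqref{eq:bernstein-pullback-hit}.

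The only point needing care is that the same diagonal $S$ appears on both sides. This holds because the index sets on $\Delta_\xi$ and $\Delta_\lambda$ are the same set $\Bset^-$, and the weight $c$ is a function of the index alone. I expect no real obstacle beyond this bookkeeping.
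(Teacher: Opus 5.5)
Your proposal is correct and follows the paper's own proof. Both arguments rest on $\tilde w_\mu = c(\alpha)\,w_\mu$ holding with the same weights on $\Delta_\xi$ and $\Delta_\lambda$, on uniqueness of the expansion, and on diagonal conjugation preserving both the sparsity pattern and $T(\pi)^{-1} = T(\pi^{-1})$; your entry check via Lemma~\ref{lem:shift-ratio} matches the paper's remark that the entries follow from Corollary~\ref{cor:bernstein-pullback}.
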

\begin{proof}
  $\tilde{w}_\mu = c(\alpha) \, w_\mu$ on both $\Delta_\xi$ and $\Delta_\lambda$, so \eqref{eq:bernstein-cob}
  restates the expansion of Corollary~\ref{cor:trimmed-cob}. A diagonal change of basis
  preserves the sparsity and the identity $T(\pi)^{-1} = T(\pi^{-1})$. The entries
  follow from Corollary~\ref{cor:bernstein-pullback}.
\end{proof}

Values in a row of $\tilde{T}(\pi)$ need no longer be equal in absolute value, and may be
rational instead of integer.

\printbibliography

@article{ArnoldFalkWinther2006,
  author  = {Arnold, Douglas N. and Falk, Richard S. and Winther, Ragnar},
  title   = {Finite element exterior calculus, homological techniques, and applications},
  journal = {Acta Numerica},
  volume  = {15},
  pages   = {1--155},
  year    = {2006},
  doi     = {10.1017/S0962492906210018}
}

@article{ArnoldFalkWinther2009,
  author  = {Arnold, Douglas N. and Falk, Richard S. and Winther, Ragnar},
  title   = {Geometric decompositions and local bases for spaces of finite element differential forms},
  journal = {Computer Methods in Applied Mechanics and Engineering},
  volume  = {198},
  number  = {21--26},
  pages   = {1660--1672},
  year    = {2009},
  doi     = {10.1016/j.cma.2008.12.017},
  note    = {arXiv:0806.1255}
}

@article{Licht2022,
  author  = {Licht, Martin W.},
  title   = {On basis constructions in finite element exterior calculus},
  journal = {Advances in Computational Mathematics},
  volume  = {48},
  number  = {2},
  pages   = {14},
  year    = {2022},
  doi     = {10.1007/s10444-022-09926-6},
  note    = {arXiv:1810.01896}
}

@article{Licht2023symmetry,
  author  = {Licht, Martin W.},
  title   = {Symmetry and invariant bases in finite element exterior calculus},
  journal = {Foundations of Computational Mathematics},
  volume  = {24},
  number  = {4},
  pages   = {1185--1224},
  year    = {2024},
  doi     = {10.1007/s10208-023-09609-8},
  note    = {arXiv:1912.11002}
}

@article{BerchenkoKogan2024,
  author  = {Berchenko-Kogan, Yakov},
  title   = {Symmetric bases for finite element exterior calculus spaces},
  journal = {Foundations of Computational Mathematics},
  volume  = {24},
  pages   = {1485--1515},
  year    = {2024},
  doi     = {10.1007/s10208-023-09617-8},
  note    = {arXiv:2112.06065}
}

@article{BerchenkoKogan2025extension,
  author  = {Berchenko-Kogan, Yakov},
  title   = {Extension operators and geometric decompositions},
  journal = {ESAIM: Mathematical Modelling and Numerical Analysis},
  volume  = {60},
  number  = {5},
  pages   = {2269--2283},
  year    = {2026},
  doi     = {10.1051/m2an/2026061},
  note    = {arXiv:2505.00129}
}

@article{ScroggsDokkenRichardsonWells2022,
  author  = {Scroggs, Matthew W. and Dokken, J{\o}rgen S. and Richardson, Chris N. and Wells, Garth N.},
  title   = {Construction of arbitrary order finite element degree-of-freedom maps on polygonal and polyhedral cell meshes},
  journal = {ACM Transactions on Mathematical Software},
  volume  = {48},
  number  = {2},
  pages   = {18:1--18:23},
  year    = {2022},
  doi     = {10.1145/3524456},
  note    = {arXiv:2102.11901}
}

@misc{ScroggsWells2026,
  author = {Scroggs, Matthew W. and Wells, Garth N.},
  title  = {Algorithm {XXXX}: Computation of finite element degree-of-freedom transformation matrices},
  year   = {2026},
  note   = {arXiv:2607.08172, submitted to ACM Transactions on Mathematical Software}
}

@article{RognesKirbyLogg2009,
  author  = {Rognes, Marie E. and Kirby, Robert C. and Logg, Anders},
  title   = {Efficient assembly of {H(div)} and {H(curl)} conforming finite elements},
  journal = {SIAM Journal on Scientific Computing},
  volume  = {31},
  number  = {6},
  pages   = {4130--4151},
  year    = {2009},
  doi     = {10.1137/08073901X}
}

@article{Kirby2018,
  author  = {Kirby, Robert C.},
  title   = {A general approach to transforming finite elements},
  journal = {SMAI Journal of Computational Mathematics},
  volume  = {4},
  pages   = {197--224},
  year    = {2018},
  doi     = {10.5802/smai-jcm.33}
}

@article{KirbyMitchell2019,
  author  = {Kirby, Robert C. and Mitchell, Lawrence},
  title   = {Code generation for generally mapped finite elements},
  journal = {ACM Transactions on Mathematical Software},
  volume  = {45},
  number  = {4},
  pages   = {41:1--41:23},
  year    = {2019},
  doi     = {10.1145/3361745},
  note    = {arXiv:1808.05513}
}

@article{FuentesKeithDemkowiczNagaraj2015,
  author  = {Fuentes, Federico and Keith, Brendan and Demkowicz, Leszek and Nagaraj, Sriram},
  title   = {Orientation embedded high order shape functions for the exact sequence elements of all shapes},
  journal = {Computers \& Mathematics with Applications},
  volume  = {70},
  number  = {4},
  pages   = {353--458},
  year    = {2015},
  doi     = {10.1016/j.camwa.2015.04.027},
  note    = {arXiv:1504.03025}
}

@phdthesis{Zaglmayr2006,
  author = {Zaglmayr, Sabine},
  title  = {High order finite element methods for electromagnetic field computation},
  school = {Johannes Kepler University Linz},
  year   = {2006}
}

@article{AinsworthCoyle2003,
  author  = {Ainsworth, Mark and Coyle, Joe},
  title   = {Hierarchic finite element bases on unstructured tetrahedral meshes},
  journal = {International Journal for Numerical Methods in Engineering},
  volume  = {58},
  number  = {14},
  pages   = {2103--2130},
  year    = {2003},
  doi     = {10.1002/nme.847}
}

@article{AgelekAndersonBangerthBarth2017,
  author  = {Agelek, Rainer and Anderson, Michael and Bangerth, Wolfgang and Barth, William L.},
  title   = {On orienting edges of unstructured two- and three-dimensional meshes},
  journal = {ACM Transactions on Mathematical Software},
  volume  = {44},
  number  = {1},
  pages   = {5:1--5:22},
  year    = {2017},
  doi     = {10.1145/3061708}
}

@article{KinnewigWickBeuchler2025,
  author  = {Kinnewig, Sebastian and Wick, Thomas and Beuchler, Sven},
  title   = {Algorithmic realization of the solution to the sign conflict problem for hanging nodes on hp-hexahedral {N}{\'e}d{\'e}lec elements},
  journal = {ACM Transactions on Mathematical Software},
  volume  = {51},
  number  = {4},
  pages   = {1--20},
  year    = {2025},
  doi     = {10.1145/3766903},
  note    = {arXiv:2306.01416}
}

@article{RapettiBossavit2009,
  author  = {Rapetti, Francesca and Bossavit, Alain},
  title   = {Whitney forms of higher degree},
  journal = {SIAM Journal on Numerical Analysis},
  volume  = {47},
  number  = {3},
  pages   = {2369--2386},
  year    = {2009},
  doi     = {10.1137/070705489}
}

@article{Bossavit2002,
  author  = {Bossavit, Alain},
  title   = {Generating {W}hitney forms of polynomial degree one and higher},
  journal = {IEEE Transactions on Magnetics},
  volume  = {38},
  number  = {2},
  pages   = {341--344},
  year    = {2002},
  doi     = {10.1109/20.996092}
}

@article{ChristiansenRapetti2016,
  author  = {Christiansen, Snorre H. and Rapetti, Francesca},
  title   = {On high order finite element spaces of differential forms},
  journal = {Mathematics of Computation},
  volume  = {85},
  number  = {298},
  pages   = {517--548},
  year    = {2016},
  doi     = {10.1090/mcom/2995}
}

@article{LohiKettunen2021,
  author  = {Lohi, Jonni and Kettunen, Lauri},
  title   = {Whitney forms and their extensions},
  journal = {Journal of Computational and Applied Mathematics},
  volume  = {393},
  pages   = {113520},
  year    = {2021},
  doi     = {10.1016/j.cam.2021.113520}
}

@article{Lohi2022,
  author  = {Lohi, Jonni},
  title   = {Systematic implementation of higher order {W}hitney forms in methods based on discrete exterior calculus},
  journal = {Numerical Algorithms},
  volume  = {91},
  number  = {3},
  pages   = {1261--1285},
  year    = {2022},
  doi     = {10.1007/s11075-022-01301-2}
}

@article{BadiaVerdugo2020,
  author  = {Badia, Santiago and Verdugo, Francesc},
  title   = {Gridap: an extensible finite element toolbox in {J}ulia},
  journal = {Journal of Open Source Software},
  volume  = {5},
  number  = {52},
  pages   = {2520},
  year    = {2020},
  doi     = {10.21105/joss.02520}
}

@article{VerdugoBadia2022,
  author  = {Verdugo, Francesc and Badia, Santiago},
  title   = {The software design of {G}ridap: a finite element package based on the {J}ulia {JIT} compiler},
  journal = {Computer Physics Communications},
  volume  = {276},
  pages   = {108341},
  year    = {2022},
  doi     = {10.1016/j.cpc.2022.108341}
}

@article{MFEM2021,
  author  = {Anderson, Robert and Andrej, Julian and Barker, Andrew and others},
  title   = {{MFEM}: A modular finite element methods library},
  journal = {Computers \& Mathematics with Applications},
  volume  = {81},
  pages   = {42--74},
  year    = {2021},
  doi     = {10.1016/j.camwa.2020.06.009}
}

@article{kirby_fast_2011,
  title = {Fast simplicial finite element algorithms using {Bernstein} polynomials},
  volume = {117},
  issn = {0029-599X, 0945-3245},
  url = {http://link.springer.com/10.1007/s00211-010-0327-2},
  doi = {10.1007/s00211-010-0327-2},
  number = {4},
  journal = {Numerische Mathematik},
  author = {Kirby, Robert C.},
  month = apr,
  year = {2011},
  pages = {631--652},
}

@article{kirby_low-complexity_2014,
  title = {Low-complexity finite element algorithms for the de {Rham} complex on simplices},
  volume = {36},
  issn = {1064-8275, 1095-7197},
  url = {http://epubs.siam.org/doi/10.1137/130927693},
  doi = {10.1137/130927693},
  number = {2},
  journal = {SIAM Journal on Scientific Computing},
  author = {Kirby, Robert C.},
  month = jan,
  year = {2014},
  pages = {A846--A868},
}

@article{kirby_fast_2017,
  title = {Fast inversion of the simplicial {Bernstein} mass matrix},
  volume = {135},
  issn = {0945-3245},
  url = {https://doi.org/10.1007/s00211-016-0795-0},
  doi = {10.1007/s00211-016-0795-0},
  number = {1},
  journal = {Numerische Mathematik},
  author = {Kirby, Robert C.},
  month = jan,
  year = {2017},
  pages = {73--95},
}

@software{BadiaManyerMarteau2026,
  author       = {Badia, Santiago and Manyer, Jordi and Marteau, Antoine},
  title        = {Numerical experiments for the article titled ``{Rotating Bases for Finite Element Exterior Calculus: Closed-Form Change of Basis Under Vertex Permutations}''},
  year         = {2026},
  organization = {Zenodo},
  doi          = {10.5281/zenodo.23049283}
}

\end{document}